\documentclass[12pt]{wart}
\usepackage{xspace,amssymb,amsfonts,euscript,eufrak,amsthm,amsmath}
\usepackage{graphicx,ifpdf}
\ifpdf \usepackage{epstopdf}
 \usepackage{pdfsync}\fi
\usepackage{palatino}
\usepackage{float}
\usepackage[usenames,dvipsnames,svgnames,table]{xcolor}

\usepackage{float}
\usepackage[all]{xy}
\usepackage{mathrsfs}
\usepackage[latin1]{inputenc}

 \title[Gentle introduction to Soergel bimodules I]
 {Gentle introduction to Soergel bimodules I:\\
  \large The basics}
\author{\hspace{1.38cm} \textsc{Nicolas Libedinsky }}

\input xy
\xyoption {all}

  \newcommand{\nc}{\newcommand}
  \newcommand{\renc}{\renewcommand}

\nc{\E}{\mathbb{E}}
\nc{\Q}{\mathbb{Q}}

\nc{\triright}{\stackrel{[1]}{\to}}

\def\to{\rightarrow}

\nc{\Br}{\mathcal{B}}
\nc{\id}{id}
\nc{\HotRR}{{}_R\mathcal{K}_R}
\nc{\HotR}{\mathcal{K}_R}
\nc{\excise}[1]{}
\nc{\defect}{\text{df}}
\nc{\h}[1]{b_{#1}}
\nc{\Z}{\mathbb{Z}}
\nc{\R}{\mathbb{R}}
\nc{\C}{\mathbb{C}}
\renc{\P}{\mathbb{P}}
\renc{\O}{\mathcal{O}}
\nc{\N}{\mathbb{N}}

\nc{\F}{\mathcal{F}}
\nc{\G}{\mathcal{G}}

\nc{\nilp}{\mathcal{N}}

\nc{\Ga}{\mathbb{G}_a} 
\nc{\Gm}{\mathbb{G}_m} 

\nc{\Loc}{\mathcal{L}}

\nc{\A}{\mathbb{A}} 

\nc{\IC}{\mathbf{IC}}
\nc{\D}{\mathbb{D}}


  \newtheorem{defi}{Definition}
  \newtheorem{thm}{Theorem}[section]
  \newtheorem{lem}[thm]{Lemma}
   \newtheorem{claim}[thm]{Claim}
  
  \newtheorem{prop}[thm]{Proposition}

  \newtheorem{nota}[thm]{Notation}
\newtheorem{ques}[thm]{Question}
\newtheorem{st}[thm]{Baby Stability Theorem}

\newtheorem{ef}[thm]{Easy Fact}

\newtheorem{sca}[thm]{Soergel's categorification Theorem}

  \theoremstyle{remark}
  \newtheorem{remark}{Remark}

\def\cA{\mathcal A}\def\cB{\mathcal B}
\def\cH{\mathcal H}

\def\II{\mathbb I}
\def\NN{\mathbb N}
\def\RR{\mathbb R}

\def\ZZ{\mathbb Z}

\def\del{\partial}

\begin{document}

\begin{abstract}
This paper is the first of a series of introductory papers on the fascinating world of Soergel bimodules. It is  combinatorial  in nature and should be accessible to a broad audience. The objective of this paper is to help the reader feel comfortable calculating with Soergel bimodules and to explain some of the important open problems in the field. The motivations, history and relations to other fields will be developed in subsequent papers of this series.

\end{abstract}
\maketitle
\section{Introduction}

\subsection{Declaration of intent}

As said in the abstract, this paper is an introduction to  Soergel bimodules. We give many examples and show explicit calculations with the Hecke algebra and the Hecke category (one of its incarnations being Soergel bimodules). Most of the  other Hecke categories (categorifications of the Hecke algebra)  such as category $\mathcal{O}$, Elias-Williamson diagrammatic category, Sheaves on moment graphs, 2-braid groups and Parity Sheaves over Schubert varieties, are left for follow-ups of this paper. The applications of this theory are also left for the follow-ups. 

Soergel bimodules were introduced  by Wolfgang Soergel \cite{So0} in the year 92', although many of the ideas were already present in his 90' paper \cite{So-1}. In those papers he explained its relations to representations of Lie groups. In the year 00' he proved \cite{So1}  a link between them (at that time he called them ``Special bimodules", and although they are quite special, apparently they are more Soergel than special) and representations of algebraic groups in positive characteristic that proved to be extremely deep. In his 07' paper  \cite{So3} he simplified many arguments and proved some new things. After this paper...

\textbf{WARNING: } The following section is just intended to impress the reader.

\subsection{...Bum! Explosion of the field}  Soergel bimodules were (and are) in the heart of an explosion of new discoveries in representation theory, algebraic combinatorics, algebraic geometry and knot theory.  We give a list of some results obtained using Soergel bimodules in the last five years.

\begin{enumerate}
\item \textbf{Algebraic groups:} A disproof of Lusztig's conjecture predicting the simple characters of reductive algebraic groups (1980).  This was probably the most important open conjecture in representation theory of Lie-type objects.  
\item \textbf{Lie algebras:} An algebraic proof of Kazhdan-Lusztig conjectures predicting the multiplicities of simple modules in Verma modules  (1979) for complex semi-simple Lie algebras. A geometric proof was given in the early 80's, but we had to wait 35 years to have an algebraic proof of an algebraic problem. 
\item \textbf{Symmetric groups: } A disproof of James conjecture predicting the characters of irreducible modular representations for the symmetric group (1990). 
\item  \textbf{Combinatorics: }A proof of the conjecture about the positivity of the coefficients of Kazhdan-Lusztig polynomials for any Coxeter system (1979). This was a major  open combinatorial problem. 
\item \textbf{Algebraic geometry: } A disproof of the Borho-Brylinski and Joseph characteristic cycles conjecture (1984).
\item \textbf{Combinatorics:} A proof of the positivity of parabolic Kazhdan-Lusztig polynomials  for any Coxeter system and any parabolic group.
\item \textbf{Knot theory: } A categorification of Jones polynomials and HOMFLYPT polynomials.
\item \textbf{Higher representation theory:} A disproof of the analogue for KLR algebras of James conjecture, by Kleschev and Ram (2011).
\item \textbf{Lie algebras: } An algebraic proof of Jantzen's conjecture about the Jantzen filtration in Lie algebras (1979).
\item \textbf{Combinatorics} A proof of the Monontonicity conjecture  (1985 aprox.)
\item \textbf{Combinatorics: }A proof of the Unimodality of structure constants in Kazhdan-Lusztig theory.

\end{enumerate}

We will start this story by the first of three levels, the classical one.

\subsection{Acknowledgements} This work is supported by the Fondecyt project 1160152 and the Anillo project ACT 1415 PIA Conicyt. The author would like to thank very warmly Macarena Reyes for her hard  work in doing most of the pictures. I  would also like to thank David Plaza, Paolo Sentinelli, Sebasti\'an Cea and Antonio Behn for detailed comments and suggestions.

\section{Classical level: Coxeter systems}

\subsection{Some definitions}\label{Some definitions}

 A \emph{Coxeter matrix} is a symmetric matrix   with entries in $\{1,2,\ldots\}\cup \{\infty\}$,  diagonal entries $1$ and off-diagonal entries at least $2$.

\begin{defi} A pair $(W,S)$, where $W$ is a group and $S$ is a finite subset of $W,$  is called a \emph{Coxeter system} if $W$ admits a presentation by generators and relations given by
$$ \langle s\in S\ \vert \ (sr)^{m_{sr}}=e\ \ \mathrm{if}\  s,r \in S \ \mathrm{and}\  m_{sr} \ \mathrm{is}\ \ \mathrm{finite} \rangle,$$
where $(m_{sr})_{s,r\in S}$ is a Coxeter matrix and $e$ is the identity element.
\end{defi}

We  then say  that $W$ is a \emph{Coxeter group}.
One can prove that in the Coxeter system defined above, the order of the element $sr$ is $m_{sr}$ (it is obvious that it divides $m_{sr}$). 
The \emph{rank} of the Coxeter system is the cardinality of $S$.
If $s\neq r,$ the relation $(sr)^{m_{sr}}=e$ is equivalent to 
$$\underbrace{srs\cdots}_{m_{sr}}=\underbrace{rsr\cdots}_{m_{sr}}$$
This is called a \emph{braid relation}. On the other hand, as $m_{ss}=1,$ we have that $s^2=e.$ This is called a \emph{quadratic relation}.
An \emph{expression}  of an element $x\in W$, is a tuple $\underline{x}=(s,r,\ldots, t)$ with $s,r,\ldots, t \in S$ such that $x=sr\cdots t$. The expression is \emph{reduced} if the length of the tuple is minimal. We denote $l(x)$ this length. 


\subsection{Baby examples}  We start with two baby examples of Coxeter systems. In these two cases (as well as in examples (C) and (D)) we will  calculate explicitly the two key objects in the theory, namely  the Kazhdan-Lusztig basis and the indecomposable Soergel bimodules.   

\newpage

\begin{description}

\item[(A)] The group $\mathrm{Symm}(\Delta)$ of symmetries of an equilateral triangle  is isomorphic to the group with $6$ elements
$$ \langle s, r\ \vert \ s^2=r^2=e, \ srs=rsr\rangle.$$
One possible isomorphism is given by the map

\begin{figure}[H] 
\begin{center}
 \includegraphics[scale=0.32]{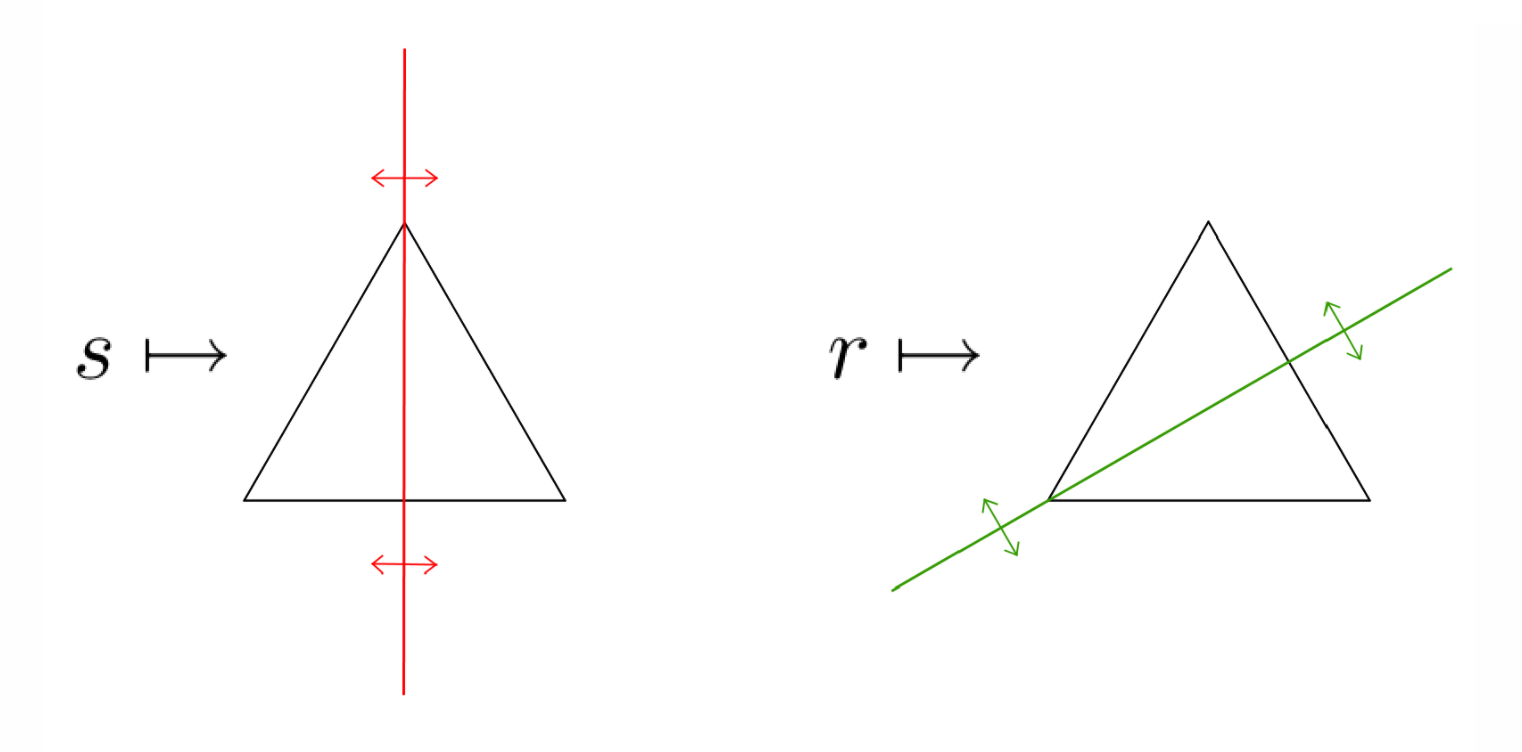} 
\end{center}
\end{figure}

\item[(B)] The group $\mathrm{Symm}(\square)$ of {symmetries of a square}  is isomorphic to the group with $8$ elements
$$ \langle s, r\ \vert \ s^2=r^2=e, \ srsr=rsrs\rangle.$$
One isomorphism between these groups is given by 
\begin{figure}[H] 
\begin{center}
 \includegraphics[scale=0.3]{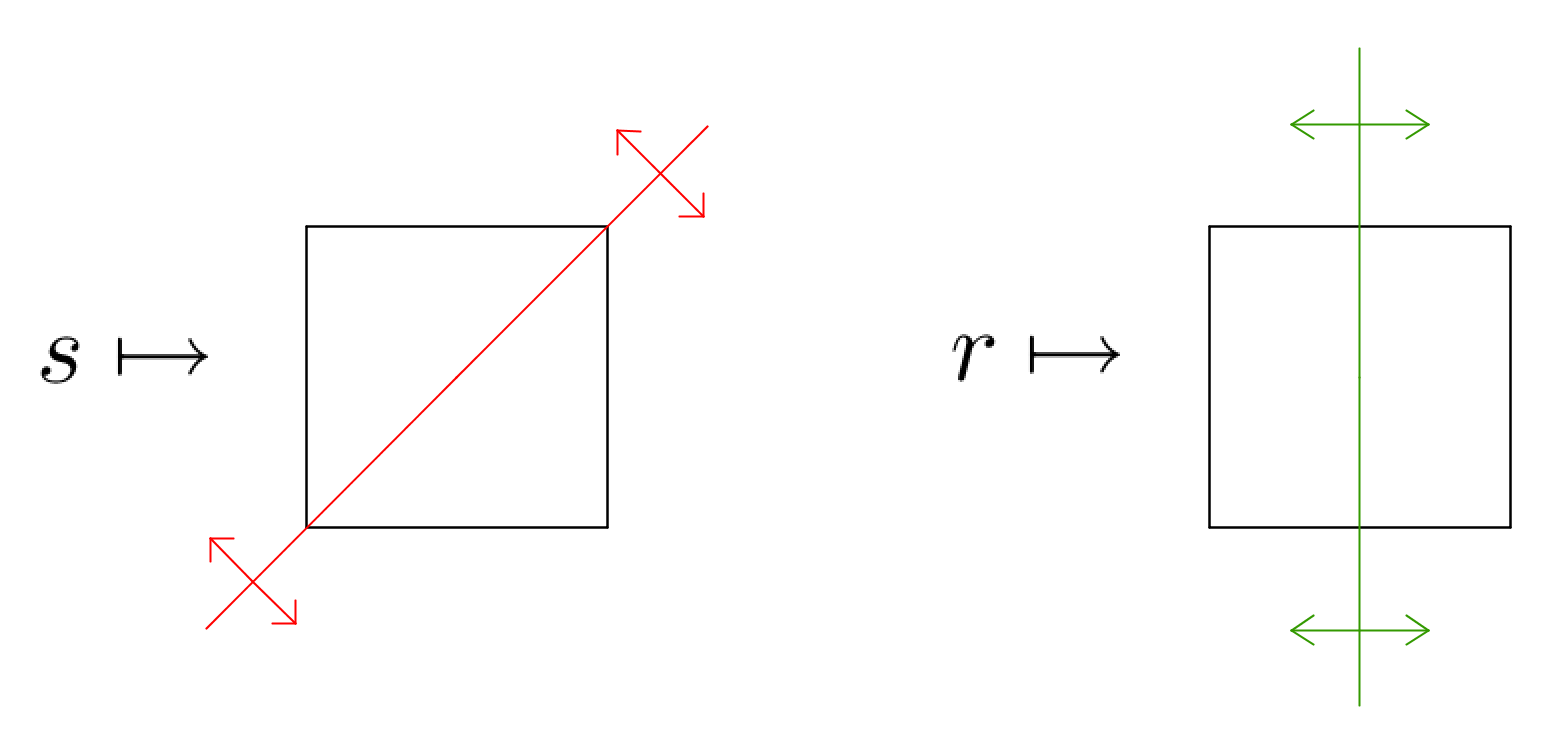} 
\end{center}
\end{figure} 

\end{description}

\subsection{Generalizing the baby examples: (infinite) regular polygons}

 One natural way to generalize the  baby examples is to consider the symmetries of a regular $n$-sided polygon. This is also a finite Coxeter group denoted $I_2(n)$ (the subindex $2$ in this notation denotes the rank of the Coxeter system, as defined in Section \ref{Some definitions}). A presentation of this group is given by
\begin{description}
\item[(C)]  \hspace{3cm} $ I_2(n)= \langle s, r\ \vert \ s^2=r^2=e, \ (sr)^{n}=e\rangle,$
\end{description} 
so $\mathrm{Symm}(\Delta)\cong I_2(3)$ and $\mathrm{Symm}(\square)\cong I_2(4).$ Again, one isomorphism here is given by sending $s$ to some reflection and $r$ to any of the ``closest reflections to it'' 
\begin{figure}[H] 
\begin{center}
 \includegraphics[scale=0.28]{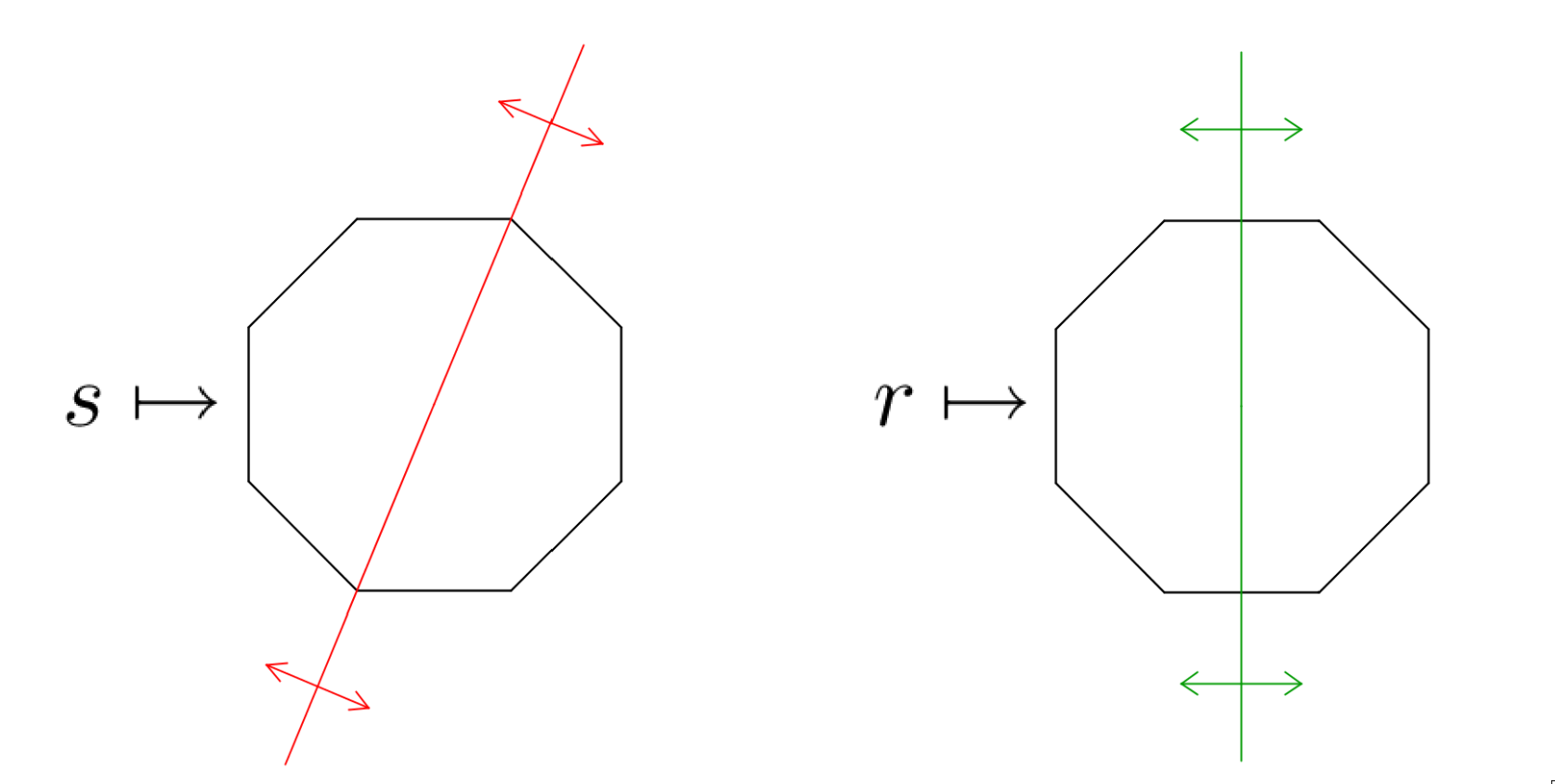} 
\caption{Example with $n=8$}  
\end{center}
\end{figure}

What is the \emph{Infinite regular polygon},  the ``limit" in $n$ of the groups $I_2(n)$? A reasonable way to search for a geometric  limit of the sequence of $n-$sided regular polygons, is to picture this sequence having ``the same size'' as in the figure
 \begin{figure}[H] 
 \includegraphics[width=\linewidth]{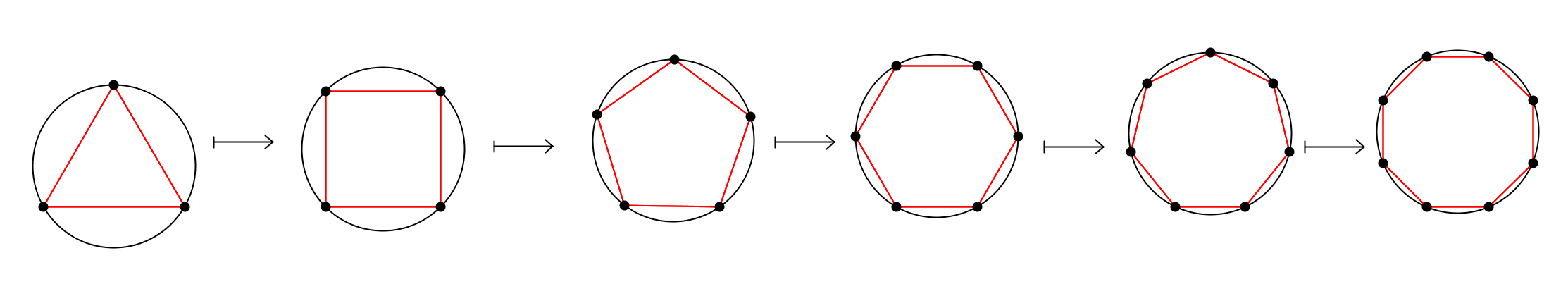} 
\caption{The incorrect mental image}  
\end{figure} 
If we do so, the (pointwise) limit is a circle. On the other hand, the limit when $n$ goes to infinity of $I_2(n)$ is algebraically clear if we consider  the presentation given in Example (C). It is the infinite group 

\begin{description}
\item \hspace{4cm} $ U_2=\langle s, r\ \vert \ s^2=r^2=e\rangle.$

\end{description} 

 But the geometric and algebraic descriptions given here do not coincide! The group $\mathrm{Symm}(\bigcirc)$ of symmetries of the circle (usually called the orthogonal group $\mathrm{O}(2)$) is not even finitely generated. We have passed to a continuous group! The best we can do is to see $  U_2$ as a dense subgroup of  $\mathrm{Symm}(\bigcirc)$ (just consider $s$ and $r$ to be two "random" reflections of the circle). 

There is a beautiful way to solve this problem: there is a discrete   geometric  limit of the sequence of $n-$sided regular polygons. Let us picture this sequence with one fixed side (in the figure, the darker one). Let us suppose that the fixed side has vertices in the points $(0,0)$ and $(0,1)$ of the plane.  
 \begin{figure}[H] 
 \includegraphics[width=\linewidth]{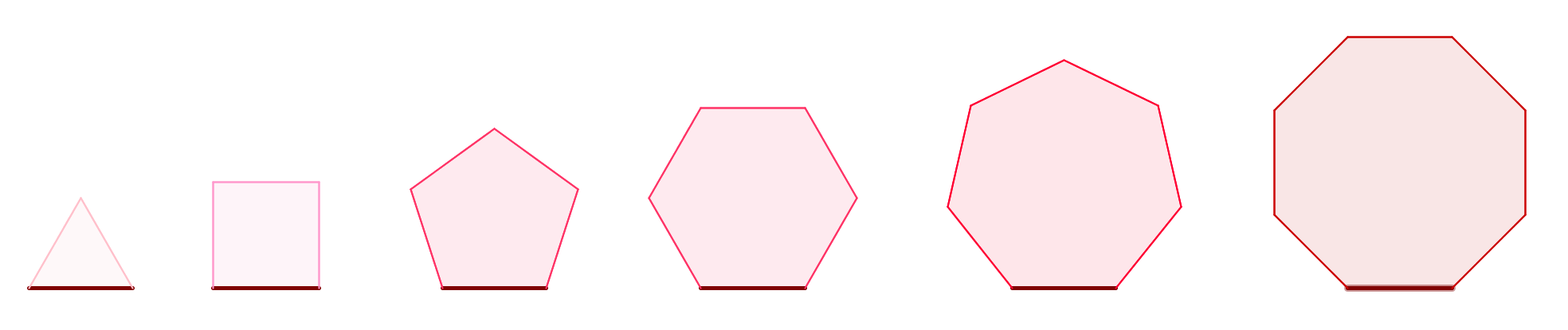} 
\caption{A new sequence}  

\end{figure} 
Then, if we make a ``close up'' around the darker side, and we put all the polygons together, we see
 \begin{figure}[H] 
 \includegraphics[width=\linewidth]{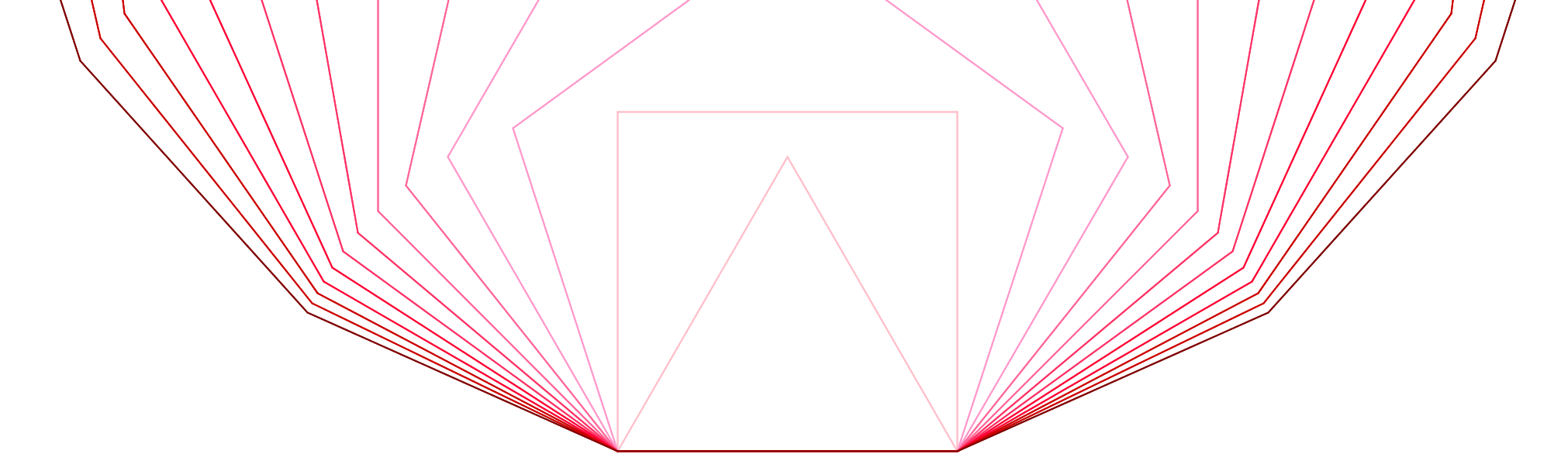} 
\caption{The figure opens like flower}  

\end{figure} 
The (pointwise) limit is $\mathbb{R}$ with the vertices converging to the set $\mathbb{Z}$. The symmetries of this geometric object (that we call $(\mathbb{R}, \mathbb{Z})$) is isomorphic to $U_2$! One isomorphism is given by sending $s$ to the reflection through $0$ and $r$ to the reflection through $1/2$ (again ``the closest reflection''). Now our geometric limit and the algebraic limit coincide and we can regain our lost calm. 

For our fourth example, we just   just raise the rank of $U_2$. 

\begin{description}
\item[(D)]  The \emph{Universal Coxeter system of rank $n$} is the group 
$$ {U}_n=\langle s_1, s_2, \ldots, s_n\ \vert \ s_1^2=s_2^2=\cdots=s_n^2=e\rangle$$
\end{description} 

This is the most complicated family of groups in which one can still compute all of the Kazhdan-Lusztig theory explicitly.

\subsection{Generalizing the baby examples:  tesselations}

We will denote by $[n,p,q]$ the Coxeter system with simple reflections $S=\{s,r,t\}$ and with Coxeter matrix  given by $m_{sr}=n, m_{st}=p$ and $m_{rt}=q.$

\subsubsection{Tesselations of the Euclidean plane}
A natural way to generalize the equilateral triangle is to consider the following tesselation of the Euclidean plane by equilateral triangles

\begin{figure}[H] 
\begin{center}
 \includegraphics[scale=0.19]{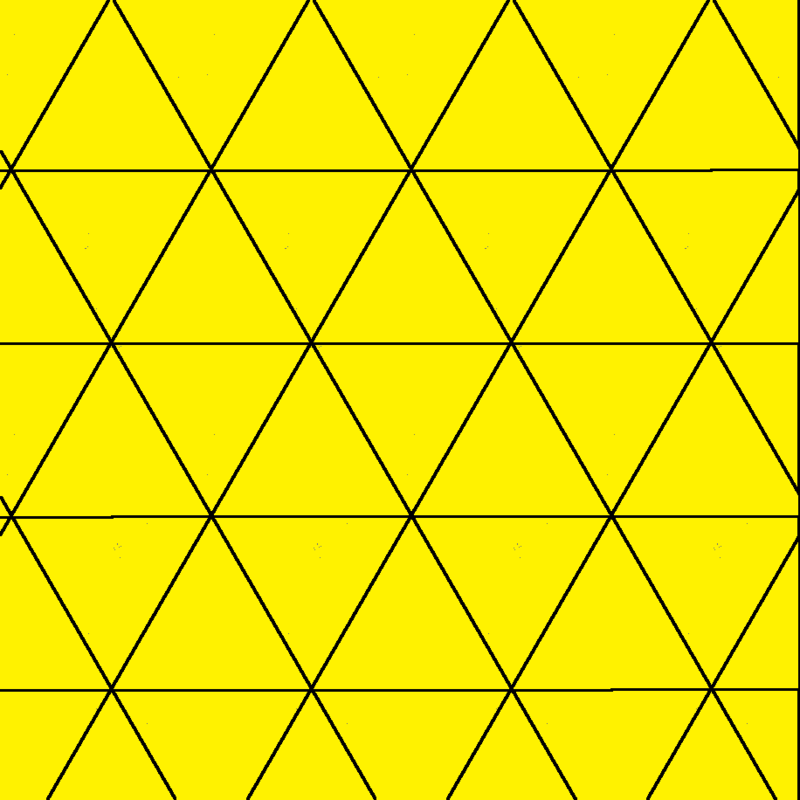}
\caption{[6,3,2]}  
\end{center}
\end{figure}

This tesselation can be generalized by coloring this tiling as in Figure \ref{triang} or by tiling the plane with other (maybe colored) regular convex polygons as it is the checkboard of Figure \ref{check} or the  honeycomb of Figure \ref{honey}.\begin{figure}[H]
\begin{minipage}[t]{0.30\linewidth}
    \includegraphics[width=\linewidth]{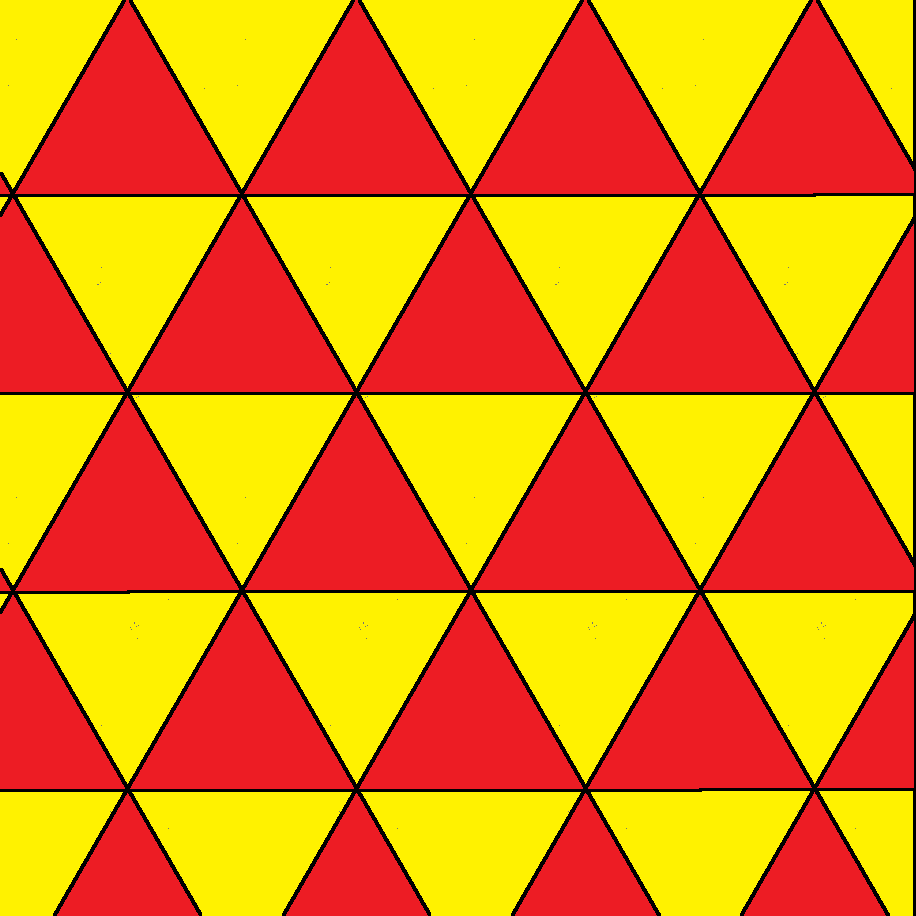}
    \caption{[3,3,3]}
    \label{triang}
\end{minipage}
    \hfill
\begin{minipage}[t]{0.30\linewidth}
    \includegraphics[width=\linewidth]{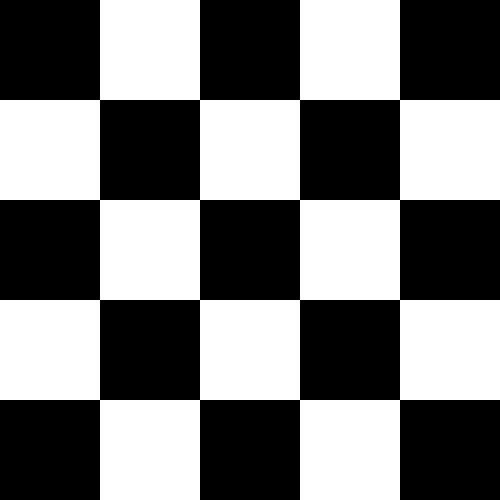}
    \caption{ [4,4,2]}
    \label{check}
\end{minipage} 
   \hfill
\begin{minipage}[t]{0.30\linewidth}
    \includegraphics[width=\linewidth]{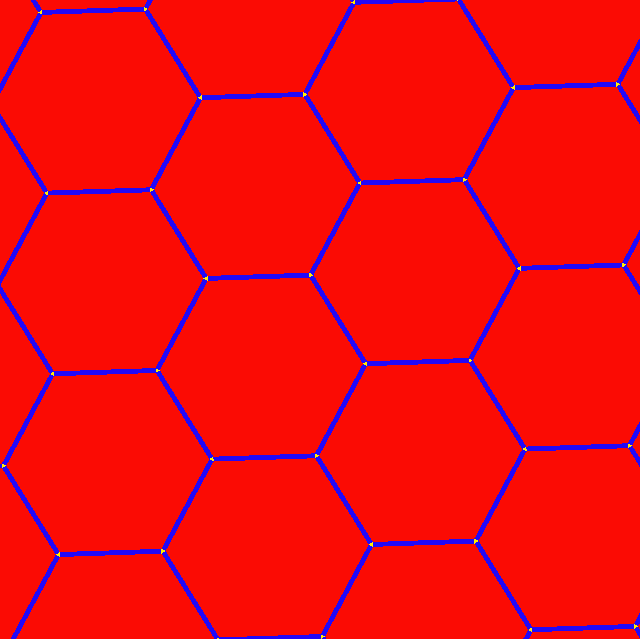}
    \caption{[6,3,2]}
    \label{honey}
\end{minipage} 
\end{figure}

Note the (curious?) equalities obtained by adding the corresponding inverses  of $n, p$ and $q$ in the last figures: $$\frac{1}{3}+ \frac{1}{3}+\frac{1}{3}=\frac{1}{4}+ \frac{1}{4}+\frac{1}{2}=\frac{1}{6}+ \frac{1}{3}+\frac{1}{2}=1$$

\subsubsection{Tesselations of the hyperbolic plane}\label{thp}

It is quite fantastic the amount of tesselations  by triangles of the hyperbolic plane. Here we give some examples using the Poincar\'e disk model. 
\begin{figure}[htbp]
\begin{minipage}[t]{0.30\linewidth}
    \includegraphics[width=\linewidth]{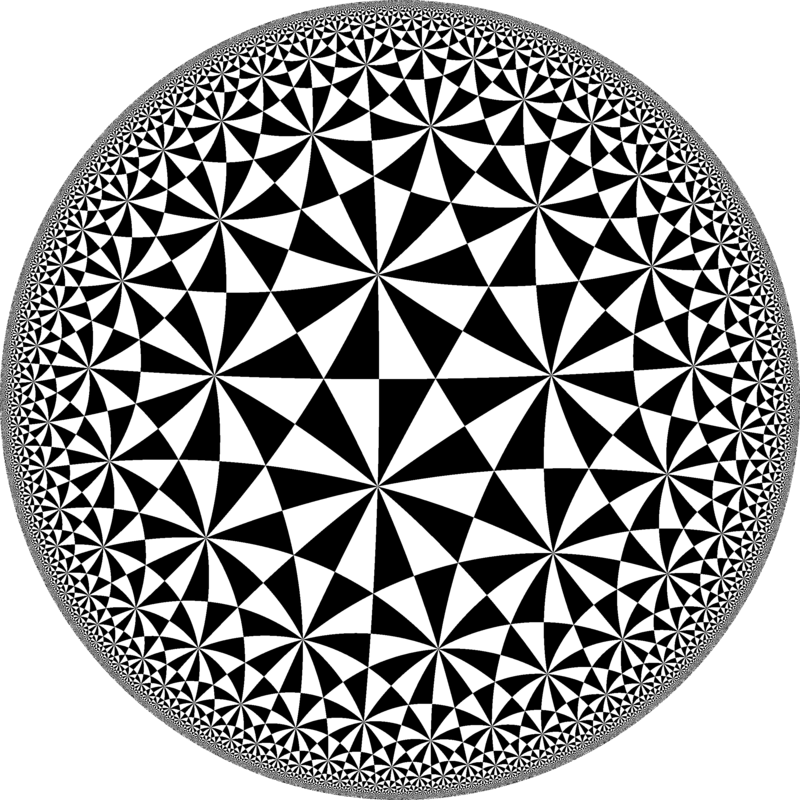}
    \caption{[7,3,2]}
    \label{trian}
\end{minipage}
    \hfill
\begin{minipage}[t]{0.30\linewidth}
    \includegraphics[width=\linewidth]{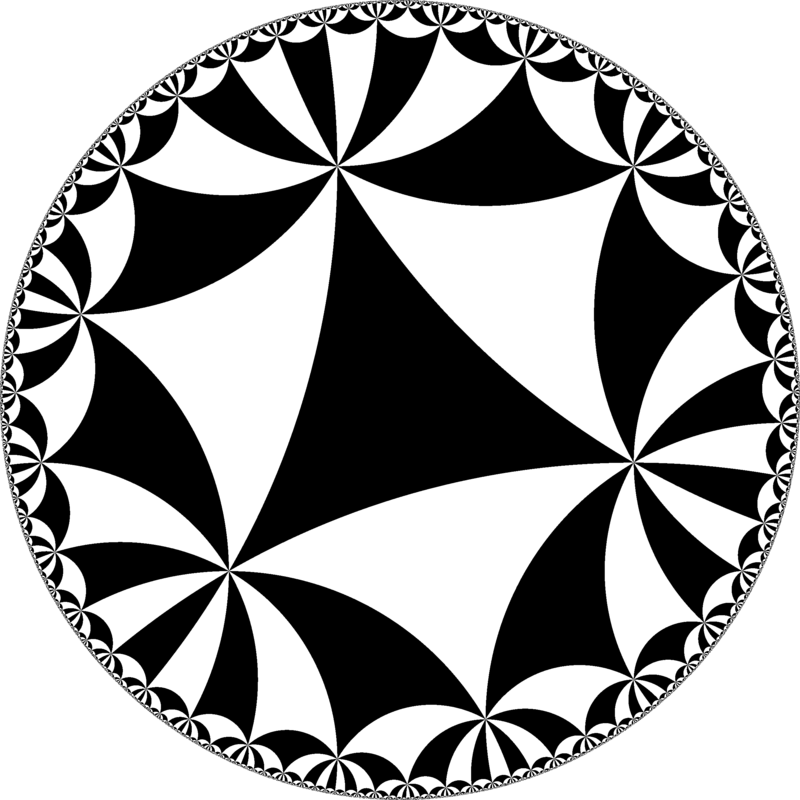}
    \caption{ [6,6,6]}
    \label{seis}
\end{minipage} 
   \hfill
\begin{minipage}[t]{0.30\linewidth}
    \includegraphics[width=\linewidth]{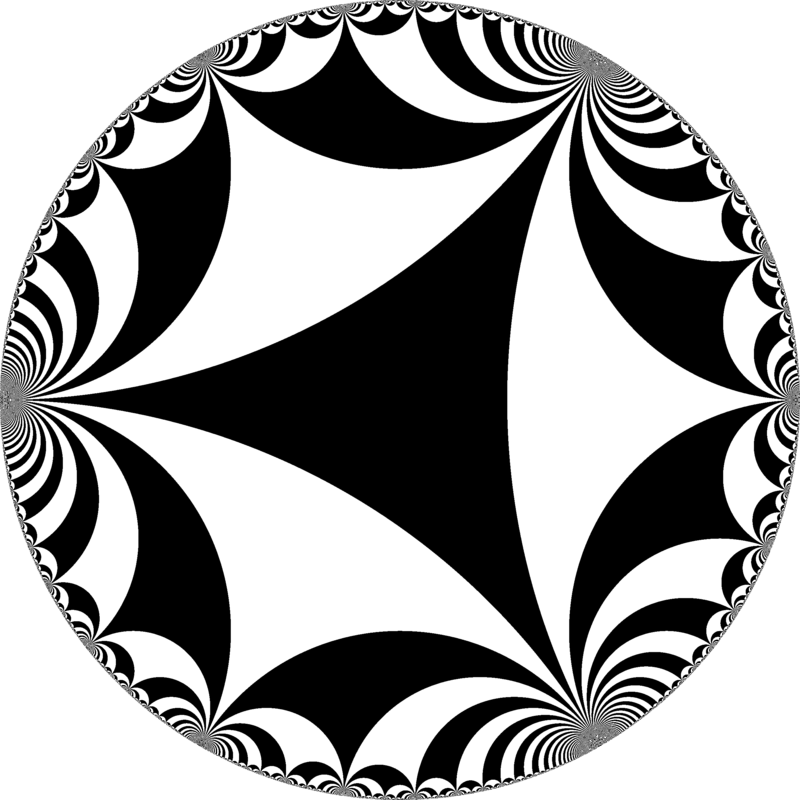}
    \caption{$[\infty,\infty,\infty,]$}
    \label{in}
\end{minipage} 
\end{figure}

Figure \ref{seis} is called by some (at least by me) ``Devil's tesselation''.

In general, there is a tesselation  by triangles of the hyperbolic plane  with group of symmetries $[n,p,q]$  if and only if \begin{equation}\label{brig}\frac{1}{n}+ \frac{1}{p}+\frac{1}{q}<1.\end{equation}
In particular, if  $n>3$ and $p,q\geq 3$ then this inequality is satisfied. So most rank three Coxeter groups are  the group of symmetries of some hyperbolic tiling. Those Coxeter groups which are not, are either the  group of symmetries of a tesselation by triangles of the Euclidean plane\footnote{Essentially those that can be constructed from Figures \ref{triang}, \ref{check} and \ref{honey}. The groups of symmetries appearing in this fashion are just the three groups $[3,3,3], [4,4,2]$ and $[6,3,2]$. } where inequality (\ref{brig}) is changed by an equality or they are the  group of symmetries of a tesselation by triangles of the sphere like the following

\begin{figure}[H]
\begin{minipage}[t]{0.30\linewidth}
    \includegraphics[width=\linewidth]{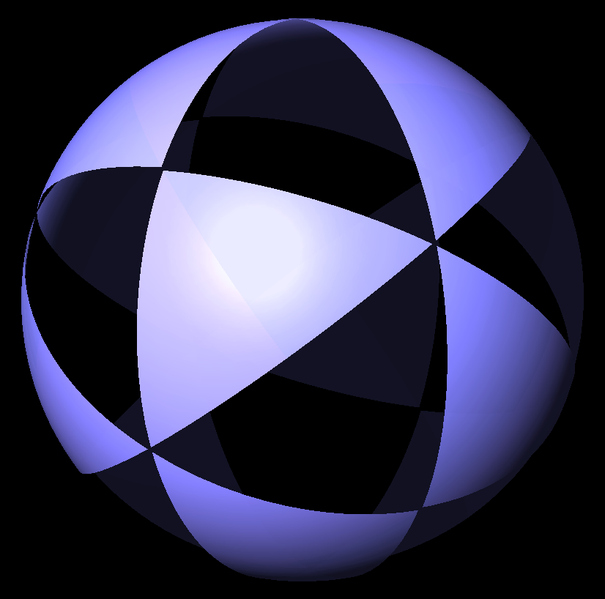}
    \caption{[2,3,3]}
    \label{12}
\end{minipage}
    \hfill
\begin{minipage}[t]{0.30\linewidth}
    \includegraphics[width=\linewidth]{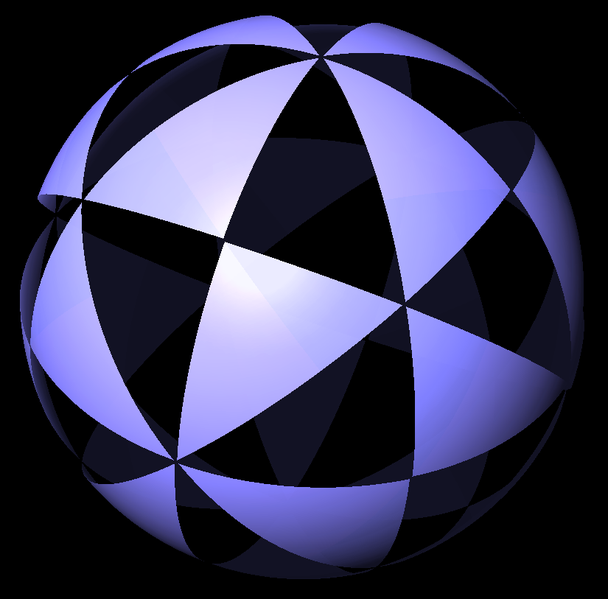}
    \caption{ [2,3,4]}
    \label{13}
\end{minipage} 
   \hfill
\begin{minipage}[t]{0.30\linewidth}
    \includegraphics[width=\linewidth]{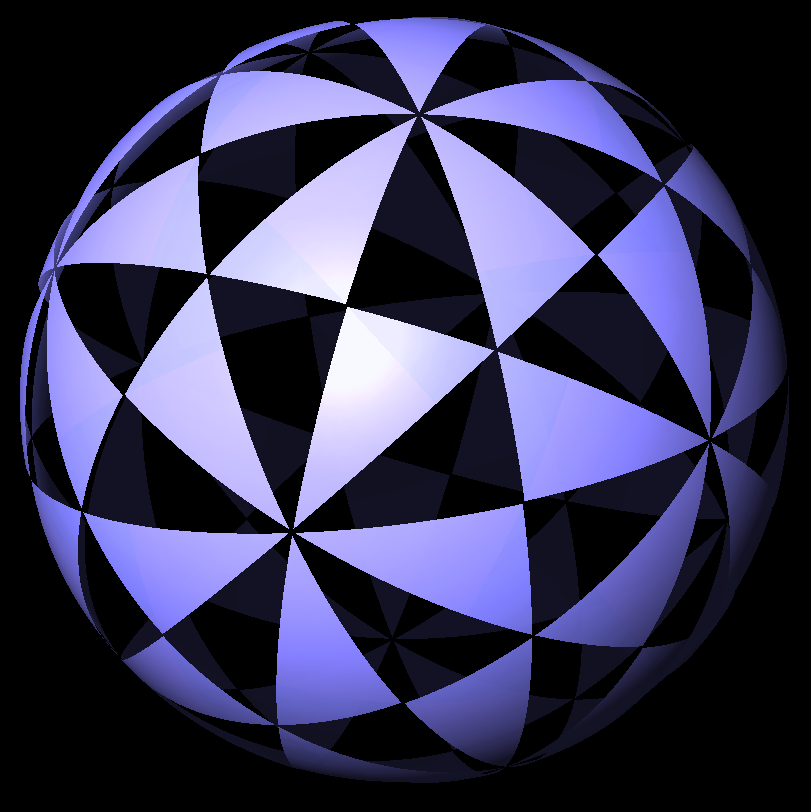}
    \caption{$[2,3,5]$}
    \label{14}
\end{minipage} 
\end{figure}

In the spherical case  the reversed inequality is satisfied

 \begin{equation}\frac{1}{n}+ \frac{1}{p}+\frac{1}{q}>1.\end{equation}

so the triples $[n,p,q]$ that appear here are the``little numbers'' (we just need to add to Figures \ref{12}, \ref{13} and \ref{14}, the groups $[2,2,n]$ for $n\geq 2$).

 \subsection{Generalizing the baby examples: more dimensions}

If we raise the rank, we can generalize our baby examples in a different way. We give the $n$-analogue of the baby examples.

\begin{enumerate}
\item The \emph{Weyl group of type $A_{n-1}$}. It can be defined as the symmetries  of an $n$-simplex, or equivalently, as $S_n$, the symmetric group in $n$ elements. The isomorphism between these two groups is obvious. It  admits a Coxeter presentation given by generators $s_i, 1\leq i <n$ and relations
\begin{itemize}
\item $s_i^2=e$ for all $i$.  
\item  $s_is_j =s_js_i$ if $ \vert i-j\vert \geq 2$ 
\item $s_is_{j}s_i=s_{j}s_is_{j}$ if $ \vert i-j\vert =1$ (i.e. $i=j\pm 1$)
\end{itemize}
The isomorphism from this group to the symmetric group is given by sending $s_i$ to the transposition $(i, i+1)\in S_n.$ 

\item The \emph{Weyl group of type $BC_{n-1}$}. It is the group of symmetries of an $n$-hypercube. It has order $2^nn!$

\item More generally, all symmetry groups of regular polytopes are finite Coxeter groups. Dual polytopes have the same symmetry group. 
\end{enumerate}

\subsection{More examples}
\begin{itemize}
\item Type A  and type B groups are examples of \emph{Weyl groups}. These groups appear in the theory of Lie algebras as the groups of symmetries of  root systems associated to semisimple Lie algebras over the complex numbers (so they are examples of finite reflection groups). There are three infinite families of Weyl groups, types $A_n, BC_n$ and $D_n$ ($n\in \mathbb{N}$) and the exeptional groups of type  $E_6$, $E_7,$  $E_8,$ $F_4,$ $G_2$. They are also symmetry groups of regular or semiregular polytopes. For example, this figure is a projection in the plane of an $8$-dimensional semiregular polytope with symmetry group $E_8$ 
\begin{figure}[H] 
\begin{center}
 \includegraphics[scale=0.43]{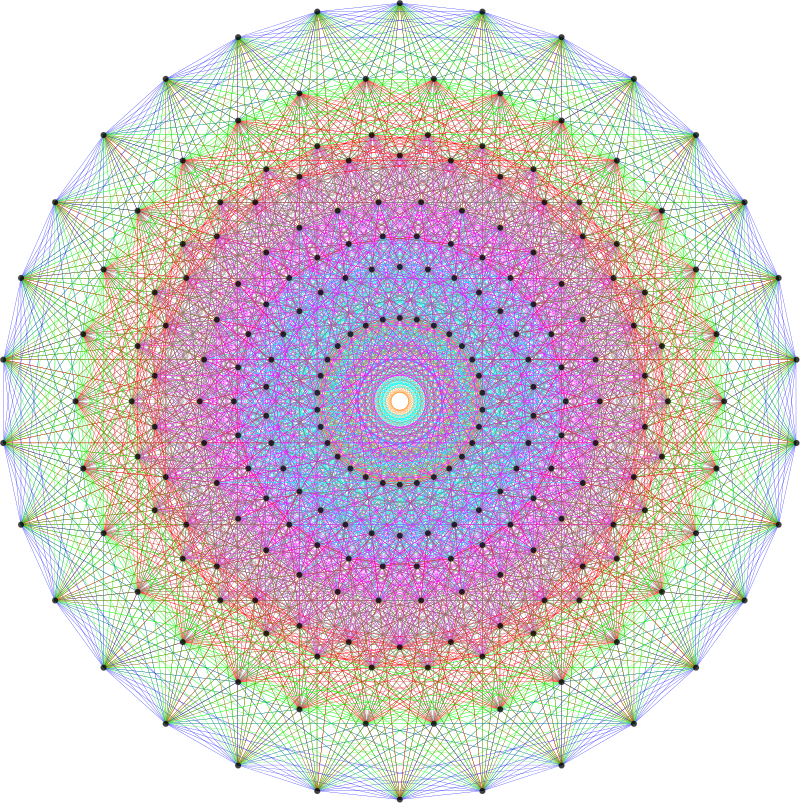} 
\end{center}
\end{figure}

\item The complete list of finite Coxeter groups is also known. Apart from the Weyl groups  there are the groups $H_2$ (symmetries of pentagon), $H_3$ (symmetries of the $3$-pentagon, or dodecahedron), $H_4$ (symmetries of the $4$-pentagon, or hecatonicosachoron, a polytope, not a dinosaur as one may think), and also the  infinite family $I_2(n)$ described above, although some of them are repeated: $I_2(3)=A_2,\,  I_2(4)=B_2,\,  I_2(5)=H_2$ and  $I_2(6)=G_2$.

\item The group $U_2$  is an example of an \emph{Affine Weyl group}, it is also called  $\widetilde{A}_1$. These groups appear naturally in the study of  representations of algebraic groups and they are semidirect products of a lattice and a Weyl group. Their classification is almost the same as the one for Weyl groups. The only difference is that the family of Weyl groups $BC_n$ gives rise to two families of affine Weyl groups, $\widetilde{B}_n$ and $\widetilde{C}_n,$ while each of the other Weyl groups, say $A_n, D_n, E_6,\ldots$ give rise to one affine Weyl group, namely   $\widetilde{A}_n, \widetilde{D}_n, \widetilde{E}_6, \ldots$ These groups also appear as symmetry groups of uniform tesselations. 

\item The \emph{Right-angled Coxeter groups} are the ones for which $m_{sr}$ is either $2$ or $\infty$ for each $s,r\in S.$ They are important groups  in geometric group theory. 
\item The \emph{Extra-large Coxeter groups} are the ones for which $m_{sr}\geq 4$  for each $s,r\in S.$  We saw lots of examples in section \ref{thp}.
\end{itemize}

\textbf{Personal philosophy of the author} \ We consider the right-angled and the extra-large Coxeter groups as extreme (and oposite) cases and Weyl groups as  being in the middle. Usually problems regarding Weyl groups are difficult to grasp while the same problems regarding the two mentioned cases are easier combinatorially and give light of the Weyl group case.

To learn more about Coxeter groups we advice the books \cite{Hu}, \cite{BB} and \cite{Da}.

\subsection{Bruhat order and an important property}\label{imp}

One important concept  about Coxeter systems is the \emph{Bruhat order}. It is defined by $x\leq y$ if some substring of some (or every) reduced word for $y$ is a reduced word for $x$. 

For example, in the case of $S_3$, the Bruhat order is represented in the following diagram 
\begin{figure}[H] 
\begin{center}
 \includegraphics[scale=0.25]{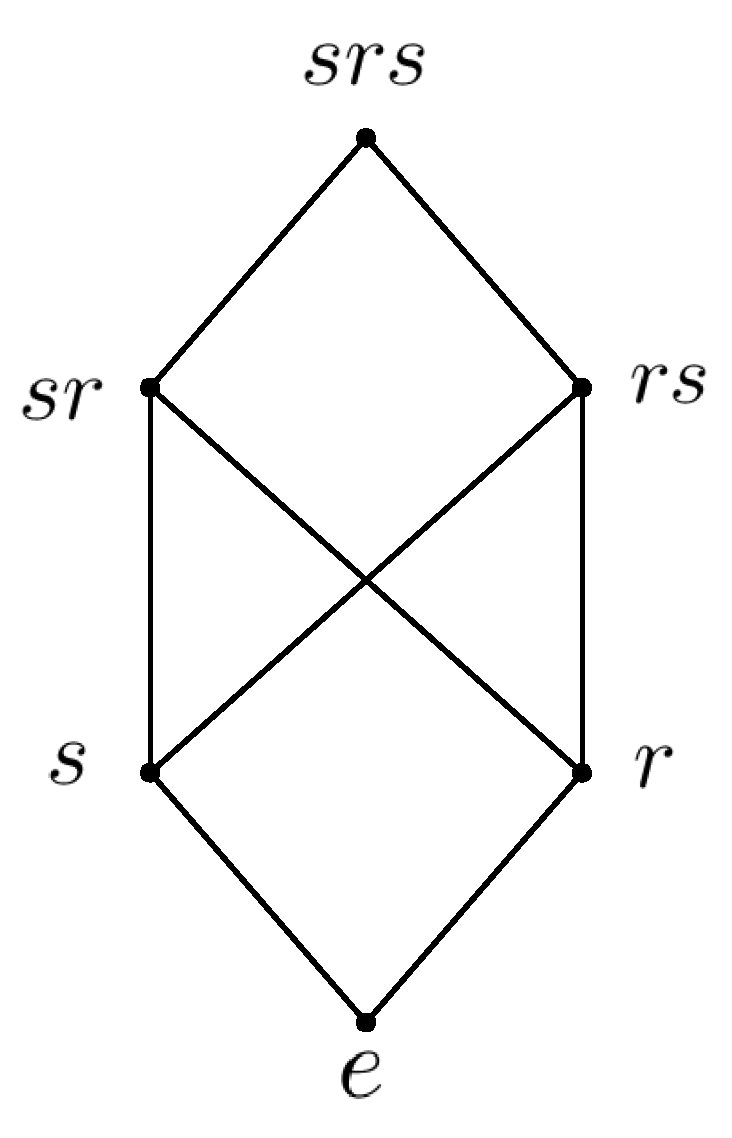} 
\end{center}
\end{figure}

We see that in this example, the only couples of non-comparable elements are $(s, r)$ and $(sr,rs)$.

A beautiful and important property (proved by  Hideya Matsumoto in 1964 \cite{Ma}) about Coxeter systems is that, if $x\in W$, one can obtain any reduced expression of $x$ from any other,  just by applying braid relations.  Moreover, if $sx<x$ there is an expression of $x$ that has $s$ in the left, i.e. $x$ admits an expression of the form $sr\cdots t$. Of course, if $xs<x$ then there is an expression of $x$ that has $s$ in the right. 

\section{Quantum level: Hecke algebras}
\subsection{Kazhdan-Lusztig's theory}

For the basic definitions of Hecke algebras and Kazhdan-Lusztig polynomials we follow \cite{SoKL}. Let $(W,S)$ be a Coxeter system. 

\begin{defi}
The \emph{Hecke algebra} $\mathcal{H}$ of a Coxeter system $(W,S)$ is the $\mathbb{Z}[v,v^{-1}]-$algebra with generators $h_s$ for $s\in S$ and relations 
\begin{itemize}
\item $h_s^2= (v^{-1}-v)h_s +1$ (quadratic relation)
\item  $\underbrace{h_sh_rh_s\ldots}_{m_{sr}}=\underbrace{h_rh_sh_r\ldots}_{m_{sr}}$ for all $s,r\in S$ (braid relation)
\end{itemize}
\end{defi}

 When $v$ is replaced by $1$ in the definition, one obtains the algebra  $\mathbb{Z}W$. Thus we can see the Hecke algebra as a deformation of the group algebra. 

 For any reduced expression $\underline{s}=sr\ldots t$ of an element $x\in W$ define the element $h_{\underline{s}}=h_sh_r\cdots h_t$. By the forementioned result of Matsumoto \cite{Ma} we know that $h_{\underline{s}}$ does not depend on the reduced expression ${\underline{s}}$, it just depends on $x$. 
 We call this element $h_x$. We define $h_{e}=1$.  The following is a basic lemma. 

\begin{lem}[Nagayoshi Iwahori]\label{Iwa}  The set $\{h_x\}_{x\in W}$ is a basis of $\mathcal{H}$ as a $\mathbb{Z}[v,v^{-1}]-$algebra, called the \emph{standard basis}.  
\end{lem}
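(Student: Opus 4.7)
The plan is to establish spanning and linear independence separately. The well-definedness of $h_x$, asserted in the sentence preceding the lemma, is already a nontrivial consequence of Matsumoto's theorem (Section \ref{imp}) combined with the braid relations in $\mathcal{H}$, and I will take it as given.

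\textbf{Spanning.} Since $\mathcal{H}$ is generated as a $\mathbb{Z}[v,v^{-1}]$-algebra by the $h_s$, it is enough to show that $h_s\cdot h_x\in\mathrm{span}\{h_y\}$ for every $s\in S$ and $x\in W$. I would split into two cases according to the Bruhat order:
\begin{itemize}
\item If $sx>x$, pick a reduced expression $\underline{x}=(s_1,\ldots,s_k)$ for $x$; then $(s,s_1,\ldots,s_k)$ is a reduced expression for $sx$, and the well-definedness of $h_{sx}$ forces $h_s h_x=h_{sx}$.
\item If $sx<x$, write $y=sx$, so $x=sy$ with $l(y)=l(x)-1$ and $h_x=h_sh_y$. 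Then the quadratic relation gives
\[h_sh_x\;=\;h_s^2\,h_y\;=\;\bigl((v^{-1}-v)h_s+1\bigr)h_y\;=\;(v^{-1}-v)h_x+h_{sx}.\]
\end{itemize}
In either case the product is a $\mathbb{Z}[v,v^{-1}]$-combination of $h_y$'s, and induction on the number of generators in an expression finishes spanning.

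\textbf{Linear independence.} The standard strategy is to exhibit a free module $M$ on which $\mathcal{H}$ acts, with the property that $h_x\cdot e_e=e_x$ for a distinguished basis $\{e_x\}_{x\in W}$; any relation $\sum c_x h_x=0$ then collapses to $\sum c_x e_x=0$ and yields $c_x=0$. Concretely, let $M=\bigoplus_{x\in W}\mathbb{Z}[v,v^{-1}]\,e_x$ and define, for each $s\in S$, an operator $T_s$ by
\[T_s(e_x)=\begin{cases} e_{sx} & \text{if } sx>x,\\[2pt] e_{sx}+(v^{-1}-v)e_x & \text{if } sx<x.\end{cases}\]
I would verify the quadratic relation $T_s^2=(v^{-1}-v)T_s+\mathrm{id}$ by the same two-case split on $sx\lessgtr x$; this is a short direct check. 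Granting both relations, $M$ becomes a left $\mathcal{H}$-module, and an easy induction on $l(x)$ (using a reduced expression $x=s_1\cdots s_k$ and the fact that each successive multiplication is length-increasing) shows $h_x\cdot e_e=e_x$.

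\textbf{The hard step.} The main obstacle is checking the braid relation
\[\underbrace{T_sT_rT_s\cdots}_{m_{sr}}\;=\;\underbrace{T_rT_sT_r\cdots}_{m_{sr}}\]
on $M$. A clean way to handle this is to reduce to the dihedral case: fix $s,r$ with $m=m_{sr}<\infty$, let $W'=\langle s,r\rangle$, and decompose $W$ into right cosets $W'\backslash W$, each containing a unique element of minimal length. One checks that both $T_s$ and $T_r$ preserve the $\mathbb{Z}[v,v^{-1}]$-span of any single coset $W'w$, and that on such a span the action is identified with the action of $T_s, T_r$ on $M_{W'}=\bigoplus_{u\in W'}\mathbb{Z}[v,v^{-1}]\,e_u$. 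The braid identity then reduces to a finite verification inside the dihedral group $W'$, where one follows the orbit of $e_e$ under the two sides: both words of length $m$ in the alternating $T$-operators produce $e_{w_0'}$ (with $w_0'$ the longest element of $W'$), while intermediate orbits match step by step. The bookkeeping in this dihedral verification is what costs real work; everything else is mechanical.
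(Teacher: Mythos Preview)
The paper does not actually prove this lemma; it simply states it as ``a basic lemma'' attributed to Iwahori and moves on. So there is no paper proof to compare against, and your plan stands on its own.

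Your strategy is the standard one (essentially Bourbaki/Humphreys): spanning via the multiplication rules, and linear independence by constructing the left regular module and checking the defining relations of $\mathcal{H}$ on it. The spanning argument and the quadratic check are fine as written.

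Two points worth tightening in the hard step. First, the coset reduction rests on the fact that if $w_0$ is the minimal-length representative of a right coset $W'w$ and $u\in W'$, then $l(uw_0)=l_{W'}(u)+l(w_0)$; this is what guarantees that the $T_s,T_r$-action on $\mathrm{span}\{e_{uw_0}:u\in W'\}$ really is intertwined with the action on $M_{W'}$. It is a standard property of minimal coset representatives for parabolic subgroups, but it should be named. Second, inside the dihedral group you must verify the braid identity on \emph{every} basis vector $e_u$, $u\in W'$, not just on $e_e$: following the orbit of $e_e$ only shows the two operators agree on a single vector, which is not enough to conclude equality in $\End(M_{W'})$. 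The full check is still finite (there are $2m$ basis vectors) and is the ``bookkeeping'' you allude to, but your current phrasing suggests a shortcut that is not valid.
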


The element $h_s$ has an inverse, namely $(h_s+v-v^{-1})$ as it is shown in the following calculation.

\begin{eqnarray}
h_s(h_s+v-v^{-1})&=&[(v^{-1}-v)h_s +1]+h_s(v-v^{-1})\nonumber\\
&=&1
\end{eqnarray}

This implies that $h_x$ has an inverse for every $x\in W.$ So we can define a $\ZZ$-module morphism  $d:\mathcal{H}\rightarrow \mathcal{H}$ by the formula $d(v)=v^{-1}$ and $d(h_x)=(h_{x^{-1}})^{-1}.$ It is an exercice to prove that this is a ring morphism, and we call it the \emph{duality} in the Hecke algebra. 

Let us make a short calculation 
\begin{eqnarray}\label{bs}
d(h_s+v)&=& (h_s)^{-1} +v^{-1}\nonumber\\
&=&(h_s+v-v^{-1})+v^{-1}\nonumber\\
&=& (h_s+v)
\end{eqnarray}

So we obtain our first example of self-dual element. 
The following theorem (see  \cite{KL}) is the foundational theorem of Kazhdan-Lusztig theory.

\begin{thm}[David Kazhdan and George Lusztig]\label{thKL}
For every element $x\in W$ there is a unique self-dual element $b_x\in \mathcal{H}$ such that 
\begin{equation}b_x\in h_x+\sum_{y\in W}v\Z[v]h_y.\end{equation}
The set $\{b_x\}_{x\in W}$  is a $\Z[v,v^{-1}]-$basis of $\mathcal{H},$ called the Kazhdan-Lusztig basis. If we write $b_x= h_x+\sum_{y\in W}h_{y,x}h_y$ then the Kazhdan-Lusztig polynomials $p_{y,x}$ are defined by the formula $p_{y,x}=v^{l(x)-l(y)}h_{y,x}.$
\end{thm}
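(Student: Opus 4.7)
The plan is to prove uniqueness first, then existence by induction on $\ell(x)$, and finally the basis property from the resulting shape of $b_x$ in the standard basis.

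For \textbf{uniqueness}, observe that the difference $f = b - b'$ of two candidates is self-dual and lies in $\sum_y v\Z[v]\,h_y$; I will show such an $f$ must vanish. The key input is a triangularity property of the duality, namely
\[
d(h_y) \in h_y + \sum_{z<y}\Z[v,v^{-1}]\,h_z,
\]
proved by induction on $\ell(y)$ from the base case $d(h_s)=h_s+(v-v^{-1})$ (the inverse computation in the text), the multiplicativity of $d$, and the lifting property of the Bruhat order. Granted this, write $f=\sum c_y h_y$ and pick $y$ maximal in the Bruhat order with $c_y\neq 0$. Comparing the $h_y$-coefficients on both sides of $d(f)=f$ gives $c_y(v^{-1}) = c_y(v)$; but $c_y\in v\Z[v]$ forces $c_y(v^{-1})\in v^{-1}\Z[v^{-1}]$, and $v\Z[v]\cap v^{-1}\Z[v^{-1}]=\{0\}$, so $c_y=0$, contradicting maximality.

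For \textbf{existence}, I induct on $\ell(x)$, with base $b_e=1$ (self-dual since $d(1)=1$). For $\ell(x)\geq 1$, use Section~\ref{imp} to pick $s\in S$ with $sx<x$, and set $y=sx$, so $b_y$ exists by induction with support in $\{z:z\leq y\}$. Since $c_s:=h_s+v$ is self-dual by (\ref{bs}), so is $c_s b_y$. The formula
\[
c_s\cdot h_z \;=\; \begin{cases} h_{sz}+v\,h_z, & sz>z,\\ h_{sz}+v^{-1}\,h_z, & sz<z,\end{cases}
\]
applied to $b_y=h_y+\sum_{w<y}c_{y,w}\,h_w$ with $c_{y,w}\in v\Z[v]$, yields $c_s b_y = h_x + \sum_{z<x} n_z\,h_z$ with each $n_z\in\Z[v]$ (the only source of a constant term being $v^{-1}\cdot v\Z[v]\subset\Z[v]$). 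Now iteratively correct: pick $z$ maximal in the Bruhat order with $n_z(0)\neq 0$ and replace $c_s b_y$ by $c_s b_y - n_z(0)\,b_z$, where $b_z$ exists by induction. Because $b_z - h_z \in \sum v\Z[v]\,h_w$, this kills the constant term at $h_z$ without creating any new constant terms at strictly higher $h_w$'s, and preserves both self-duality and the leading coefficient of $h_x$. The set of offending $z$'s strictly shrinks at each step, so the procedure terminates and produces $b_x$.

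The \textbf{basis property} follows because the construction gives $b_x\in h_x+\sum_{z<x}v\Z[v]\,h_z$, so the transition matrix from $\{b_x\}$ to $\{h_x\}$ is unitriangular along any linear extension of the Bruhat order; combined with Lemma~\ref{Iwa} this yields the basis claim. The \textbf{main obstacle} is the iterative subtraction in the existence step: one must be sure that subtracting $n_z(0)\,b_z$ never resurrects a constant term at some strictly larger $h_w$, which is precisely what forces the choice of $z$ maximal and makes the inductive form of $b_z$ indispensable. A secondary delicate point is the triangularity of $d$ used in uniqueness, which rests on the lifting property of the Bruhat order.
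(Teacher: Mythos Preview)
Your argument is correct and follows essentially the same route as the paper's proof in Section~\ref{proofth}: produce a self-dual element $b_sb_{sx}$ via equation~(\ref{cases}), correct its constant terms by subtracting integer multiples of lower $b_y$'s, and deduce uniqueness from the triangularity $d(h_y)\in h_y+\sum_{z<y}\Z[v,v^{-1}]h_z$. The only real difference is organizational: the paper proves existence first and then extracts the triangularity of $d$ from the already-built $b_y$'s (via $h_x\in b_x+\sum_{y<x}\Z[v,v^{-1}]b_y$), whereas you prove that triangularity directly by induction using the lifting property, allowing you to do uniqueness first; also, the paper subtracts $\sum_{y<x}p_y(0)\,b_y$ in one stroke rather than iteratively, which works for the same reason your iteration terminates (each $b_z-h_z$ lies in $\sum v\Z[v]h_w$, so no constant terms are disturbed elsewhere).
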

\begin{remark}\label{rem1} We will prove in Section \ref{proofth} a stronger version of this theorem, namely that \begin{equation}\label{KL}b_x\in h_x+\sum_{y<x}v\Z[v]h_y,\end{equation} where $<$ refers to the Bruhat order. 
\end{remark}

Before we prove this theorem we will believe it for a while and calculate the Kazhdan-Lusztig bases in some  examples. 

\subsection{Calculations of KL bases in examples}

In this section we give an explicit calculation of the Kazhdan-Lusztig basis for the examples A, B and C and we will give the formula (without a  proof) for example D. 

\subsubsection{Baby example A} It is clear that $b_{\mathrm{id}}=1$. We have seen that $h_s+v$ is self-dual and it is of the form (\ref{KL}), so $b_s=(h_s+v).$ By symmetry between $s$ and $r$ we have that $b_r=(h_r+v).$

It is easy to see that $b_{sr}=b_{s}b_{r}$. It is self-dual because $d$ is a ring morphism and $b_{s}$ and $b_{r}$ both are. On the other hand it is of the form (\ref{KL})
$$b_{s}b_{r}=h_{sr}+vh_s+vh_r+v^2,$$
and again we obtain $b_{rs}$ by symmetry. 

If we were very optimistic we would believe that $b_{srs}=b_{s}b_{r}b_{s}$, which is self-dual. Let us calculate
\begin{eqnarray}
b_{s}b_{r}b_{s}&=&(h_{sr}+vh_s+vh_r+v^2)(h_s+v)\nonumber\\
&=&(h_{srs}+\underbrace{vh_s^2}+vh_{rs}+v^2h_s)+(vh_{sr}+v^2h_s+v^2h_r+v^3)
\end{eqnarray}
But $vh_s^2=(1-v^2)h_s +v,$ so $b_{s}b_{r}b_{s}$ is not of the form (\ref{KL}), we have a term that is $h_s$. To solve this issue we substract  $b_{s}.$ We still have a self-dual element and we eliminate the $h_s$ from the sum, so finally we obtain
\begin{eqnarray}
b_{srs}&=&b_{s}b_{r}b_{s}-b_{s}\nonumber\\
&=&h_{srs}+vh_{rs}+vh_{sr}+v^2h_s+v^2h_r+v^3
\end{eqnarray}

\subsubsection{Baby example B} In Example A we never used that $m_{sr}=3$ in our calculations. The point is that if $m_{sr}\neq 3$ then $b_{s}b_{r}b_{s}-b_{s}\neq b_{r}b_{s}b_{r}-b_{r}$. In our baby example \textbf{(B)} we have $m_{sr}=4. $ For the same reasons as before we have 
\begin{itemize}
\item $b_{sr}=b_{s}b_{r}$ 
\item $b_{srs}=b_{s}b_{r}b_{s}-b_{s}$ \item $b_{rsr}=b_{r}b_{s}b_{r}-b_{r}.$ 
\end{itemize}
So we just need to calculate $b_{srsr}.$
We start again with $$b_{s}b_{r}b_{s}b_{r}=(h_{sr}+vh_s+vh_r+v^2)(h_{sr}+vh_s+vh_r+v^2).$$
When we expand the right hand side, the only terms in the sum that do not give elements  of the form (\ref{KL}) are $(vh_s)(h_{sr})=(1-v^2)h_{sr}+vh_r$ and $(h_{sr})(vh_r)=(1-v^2)h_{sr}+vh_s,$ so if we substract $2b_{sr},$ we eliminate the two ``problematic" terms and we obtain 

\begin{eqnarray}
b_{srsr}&=&b_{s}b_{r}b_{s}b_{r}-2b_{s}b_{r}\nonumber\\
&=&h_{srsr}+vh_{srs}+vh_{rsr}+v^2h_{sr}+v^2h_{rs} +v^3h_s+v^3h_r+v^4\nonumber\\
&=& \sum_{y\leq srsr}v^{4-l(y)}h_y
\end{eqnarray}

\subsubsection{Example C}  

As one might have conjectured looking at the first two examples, for $x\in I_2(n)$ we have
\begin{equation}\label{excc}b_x= \sum_{y\leq x}v^{l(x)-l(y)}h_y\end{equation}
If we would want to play the game we played in the first two examples, this is, if we wanted to express $b_x$ as additions and substractions of \emph{Bott-Samelsons} (i.e. objects of the type $b_sb_r\cdots b_t,$ with $s,r,\ldots, t\in S$) then we obtain the combinatorics appearing in Temperley-Lieb algebras, but this approach is a bit complicated. It is  easier to  prove directly  equation (\ref{excc}).

 We will prove equation (\ref{excc}) by induction on the length of $x$. Let us call for the moment \begin{equation*}\label{exc}c_x:= \sum_{y\leq x}v^{l(x)-l(y)}h_y\end{equation*}
 
 So our induction hypothesis is that $b_y=c_y$ for all elements $y$ such that $l(y)\leq n$. Let us introduce the following 

\begin{nota} In the group $I_2(n)=\langle s,r \rangle $ we will denote, for any $0\leq i\leq n$,   $$s(i):=\underbrace{srs\cdots}_{i\ \mathrm{terms}}\in W \hspace{.3cm}  \mathrm{and} \hspace{.3cm} r(i):=\underbrace{rsr\cdots}_{i\  \mathrm{terms}} \in W  $$
\end{nota}
We will prove that 
\begin{equation}\label{good}
c_{s(n+1)}=b_sc_{r(n)}-c_{s(n-1)}
\end{equation}
thus mimicking the construction that we will do in the proof of Theorem \ref{thKL}. Once we have proved this, we are done, because the right-hand side is clearly self-dual (by induction hypothesis), and the left-hand side clearly belongs to $h_{s(n+1)}+\sum_yv\ZZ[v]h_y$ and thus $c_{s(n+1)}=b_{s(n+1)}$.

We call $x:=s(n+1).$  If $$A:=\sum_{0\leq i \leq n}v^{n-i}h_{r(i)}$$ and 
$$B:=\sum_{0< i < n}v^{n-i}h_{s(i)},$$
then $c_{r(n)}=A+B.$

We have that 
\begin{align*}
b_sA&=\sum_{0\leq i \leq n}v^{n-i}h_{s(i+1)}+\sum_{0\leq i \leq n}v^{(n+1)-i}h_{r(i)}\\
&=  \sum_{1\leq j \leq n+1}v^{(n+1)-j}h_{s(j)} +\sum_{0\leq i \leq n}v^{(n+1)-i}h_{r(i)}\\
&=c_{s(n+1)}.
\end{align*}
On the other hand we have, using the quadratic relation, 
\begin{align*}
b_sB&=\sum_{0< i < n}v^{n-i}h_{r(i-1)}+\sum_{0< i < n}v^{n-i-1}h_{s(i)}\\
&=   \sum_{0\leq j < n-1}v^{(n-1)-j}h_{r(j)}+\sum_{0< i < n}v^{(n-1)-i}h_{s(i)} \\
&=    c_{s(n-1)}
\end{align*}
  thus proving equation (\ref{good}).

\begin{remark}\label{univ}
We can see that the  proof of equation (\ref{good}) is independent of the dihedral group in which one is placed, but one knows that in $I_2(n)$ we have  $h_{s(n)}=h_{r(n)}$, and thus, by equation (\ref{good}) we have $b_{s(n)}=b_{r(n)}$. This is the basic insight of the following example. 
\end{remark}

\subsubsection{Example D}  
Kazhdan-Lusztig polynomials were discovered (or invented) in 1979, but it was only in 1990 that Matthew Dyer gave a formula  \cite{Dy} to calculate inductively the Kazhdan-Lusztig basis for a Universal Coxeter system (we will not reproduce here the proof, although it is not a difficult one). In this case, every element has only one expression as a product of elements of $S$.

\begin{thm}[Dyer's Formula]\label{Dy} Let $x\in U_n$ and $x=rs\cdots $ with $r,s, \ldots \in S.$ Then we have the following recursive formula
\begin{eqnarray}\label{universal}
b_rb_x  & = & (v+v^{-1})b_x \nonumber\\ 
b_tb_x&=&b_{tx}\hspace{1.7cm} \mathrm{if} \ t\neq r,s \nonumber \\
b_sb_x &=&b_{sx}+b_{rx}\ \hspace{.6cm}  \mathrm{if}\ \ s\neq r 
\end{eqnarray}
\end{thm}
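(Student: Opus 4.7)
The argument is by strong induction on $l(x)$. The base cases $l(x) \leq 1$ are immediate: for $x = r$, squaring $b_r = h_r + v$ via the quadratic relation gives $b_r^2 = (v+v^{-1})b_r$, while $b_t b_r = (h_t + v)(h_r + v) = b_{tr}$ for $t \neq r$ follows by direct multiplication.

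For the inductive step, write $x = r_1 r_2 \cdots r_n$ in its (unique) reduced form (so $r = r_1$ and $s = r_2$), and set $x' = r_2 \cdots r_n$ and, when $n \geq 3$, $x'' = r_3 \cdots r_n$. The workhorse is the \emph{decomposition lemma}
\[
b_x \;=\; b_{r_1} b_{x'} - \delta\, b_{x''},
\qquad \delta = \begin{cases} 1 & \text{if } n \geq 3 \text{ and } r_3 = r_1, \\ 0 & \text{otherwise,}\end{cases}
\]
obtained by applying the inductive hypothesis to $b_{r_1} b_{x'}$: since $x'$ begins with $r_2 \neq r_1$, Case 2 for $x'$ yields $b_{r_1} b_{x'} = b_x$ when $r_1 \neq r_3$, and Case 3 yields $b_{r_1} b_{x'} = b_x + b_{x''}$ (using $r_1 x' = x$ and $r_2 x' = x''$) when $r_1 = r_3$.

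Case 1 of Dyer then drops out at once: multiplying the decomposition on the left by $b_{r_1}$ and invoking $b_{r_1}^2 = (v+v^{-1})b_{r_1}$ from the base case together with Case 1 applied inductively to $x''$ (which, when $\delta = 1$, starts with $r_3 = r_1$), we obtain $b_{r_1} b_x = (v+v^{-1})(b_{r_1} b_{x'} - \delta b_{x''}) = (v+v^{-1}) b_x$. For Cases 2 and 3 the plan is to derive the symmetric \emph{right decomposition} $b_x = b_y b_{r_n} - \delta''\, b_{y'}$ (with $y = r_1 \cdots r_{n-1}$, $y' = r_1 \cdots r_{n-2}$, and $\delta'' = 1$ iff $n \geq 3$ and $r_n = r_{n-2}$) by applying the involutive antiautomorphism $h_w \mapsto h_{w^{-1}}$ of $\mathcal{H}$, which sends $b_z$ to $b_{z^{-1}}$, to the left decomposition of $x^{-1}$; this step only calls on Dyer for the element $(x^{-1})' = r_{n-1} \cdots r_1$ of length $n-1$, safely within the inductive hypothesis. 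Multiplying this right decomposition on the left by $b_t$ and invoking the inductive hypothesis on $b_t b_y$ and $b_t b_{y'}$ (Case 2 if $t \neq r_1, r_2$; Case 3 if $t = r_2 = s$, both valid because $y, y'$ share their first two letters with $x$) produces a combination of KL basis elements that should be identified with $b_{tx}$ (Case 2) or with $b_{sx} + b_{rx}$ (Case 3).

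\textbf{Expected obstacle.} The delicate point in Cases 2 and 3 is this final identification, which ostensibly rests on the right decomposition of an element of length $n+1$; deriving the latter would in turn require Dyer's formula for a reversed element of length $n$ --- the same length as $x$, not strictly smaller. Circumventing this apparent circularity requires either (i) strengthening the induction so that Dyer's three formulas and the right decomposition are established jointly for all length-$n$ elements in a consistent order, or (ii) verifying the identification directly via the uniqueness characterization of the KL basis: one checks that $b_t b_x$ (resp.\ $b_t b_x - b_{rx}$) is self-dual with leading term $h_{tx}$ (resp.\ $h_{sx}$) and all lower coefficients in $v\Z[v]$. This last verification reduces to the combinatorial fact, special to $U_n$, that the Kazhdan--Lusztig $\mu$-coefficient $\mu(z, x)$ vanishes for every $z$ except $z = x'$, where $\mu(x', x) = 1$; the subword-based structure of the Bruhat order in universal Coxeter systems makes this fact provable by a parallel induction, but the subcase $t = r_3$ with $\delta = 1$ --- where the inductive call to $b_t b_{x''}$ falls into Case 1 rather than Case 2 --- is the most intricate piece of the bookkeeping, as one must verify that all resulting correction terms cancel precisely.
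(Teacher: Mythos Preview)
The paper does not actually supply its own proof of this theorem: immediately before the statement it says ``we will not reproduce here the proof, although it is not a difficult one'' and refers the reader to Dyer's original paper. The only fragment the paper proves is the dihedral special case, namely equation~(\ref{good}), which is exactly Case~3 of the formula when only two simple reflections are present. So there is no ``paper's proof'' to compare against; I can only comment on the internal soundness of your plan.

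Your diagnosis of the circularity in the right-decomposition route is accurate: deriving the right decomposition of $tx$ (length $n+1$) via the anti-involution requires Dyer for an element of length $n$, which is precisely what you are proving. Option~(ii), going through the uniqueness characterisation and the $\mu$-coefficients, is indeed the clean way out, and it is not circular: the left decomposition $b_x = b_{r_1}b_{x'} - \delta\,b_{x''}$ uses only Dyer at length $n-1$, so it already determines $b_x$ explicitly in the standard basis and hence determines every $\mu(z,x)$ \emph{before} you attempt Cases~2 and~3 at length $n$.

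However, the combinatorial fact you state is wrong as written. It is \emph{not} true that $\mu(z,x)$ vanishes for every $z\neq x'$ in $U_n$. For instance, with three generators $a,b,c$ and $x=abca$ one computes $b_x=b_ab_bb_cb_a$ and finds $h_{a,x}=v+v^{3}$, so $\mu(a,abca)=1$ even though $a\neq x'=bca$ and $l(x)-l(a)=3$. What \emph{is} true, and what your argument actually needs, is the weaker statement: for $z<x$ with $tz<z$ (equivalently, $z$ begins with the letter $t$) and $t\neq r_1$, one has $\mu(z,x)=0$ unless $t=r_2$ and $z=x'$, in which case $\mu(x',x)=1$. This restricted claim can be proved by the same induction via the left decomposition, and once it is in hand Cases~2 and~3 follow immediately from the standard identity $b_t b_x = b_{tx}+\sum_{tz<z}\mu(z,x)\,b_z$. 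So your option~(ii) succeeds, but only after you replace the overstated $\mu$-vanishing by the correct, $t$-dependent version.
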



Our baby examples A and B are particular cases of example C (the case of dihedral groups). We just saw in equation (\ref{good}) that Dyer's Formula is also true for Dihedral groups (the first case of the formula comes from the fact $b_sb_s=(v+v^{-1})b_s$ and the second case never appears in Dihedral groups). In fact the same proof works for the infinite Dihedral group as we said in Remark \ref{univ}. So the whole point of this theorem is that the calculation of the Kazhdan Lusztig basis in this case is local in nature, i.e. different strings of alternating simple reflections ``don't intersect each other'' in the following sense.

Let $x=\cdots psps$ be an alternating sequence of simple reflections ending by $s$, and $y=srsrs\cdots$ an alternating sequence of simple reflections starting by $s$. Then we use the following notation
$$b_x\ast b_y:= \frac{b_xb_y}{(v+v^{-1})}$$
It is an exercice to prove that $b_x\ast b_y\in \cH$ (hint: $b_x$ is divisible on the right and $b_y$ is  divisible on the left by $b_s$).     Consider the element $$x:={\color{red}s}{\color{blue}r}{\color{red}s}{\color{blue}r}{\color{red}s}{\color{blue}r}{\color{orange} p}{\color{blue}r}{\color{orange} p}{\color{blue}r}{\color{teal} u}{\color{olive}q}{\color{darkgray} t}{\color{olive}q} {\color{blue}r}{\color{olive}q}{\color{blue}r}{\color{olive}q}{\color{blue}r}{\color{olive}q}.$$ 
As we said, the calculation is local, we have
 $$ b_x=b_{{\color{red}s}{\color{blue}r}{\color{red}s}{\color{blue}r}{\color{red}s}{\color{blue}r}}\ast b_{{\color{blue}r}{\color{orange} p}{\color{blue}r}{\color{orange} p}{\color{blue}r}}\ast b_{{\color{blue}r}{\color{teal} u}}\ast b_{{\color{teal} u}{\color{olive}q}} \ast b_{{\color{olive}q}{\color{darkgray} t}{\color{olive}q}}\ast b_{{\color{olive}q}{\color{blue}r}{\color{olive}q}{\color{blue}r}{\color{olive}q}{\color{blue}r}{\color{olive}q}}.$$

 With Dyer's Formula in hand we could have done all the calculations we did in the other examples with no effort. We apply it three times and we obtain
\begin{itemize}
\item $b_{sr}=b_sb_r$ 
\item$b_{sr}b_s=b_{srs}+b_s \longrightarrow b_{srs}=b_{s}b_rb_s-b_s$ 
\item $b_{srs}b_r=b_{srsr}+b_{sr} \longrightarrow  b_{srsr}=b_sb_rb_sb_r-2b_sb_r$
\end{itemize}
\hspace{10cm}Aha!

\subsection{Proof of Theorem \ref{thKL}}\label{proofth}

\begin{proof} We reproduce the  beautiful (and simple) proof  of  Soergel  \cite[Theorem 2.1]{SoKL} of the stronger version explained in Remark  \ref{rem1}. 

\subsubsection{Existence}\label{1}

We prove it by induction on the Bruhat order. It is clear that $b_e=h_e=1$ and we have already seen in equation (\ref{bs}) that $b_s=h_s+v.$ The following equation is easy (see the important property in Section \ref{imp})
\begin{equation}\label{cases}
    b_sh_x=
    \begin{cases}
      h_{sx}+vh_x & \text{if } sx > x\\
      h_{sx}+v^{-1}h_x & \text{if } sx < x.
    \end{cases}
\end{equation}
Now suppose we have proved the existence for all elements lesser than $x$ in the Bruhat order and $x\neq e.$ Then we can find an $s\in S$ such that $sx<x$. By  induction hypothesis and using equation (\ref{cases}) one has \begin{equation} b_sb_{sx}=h_x+\sum_{y<x} p_{y}h_y, \end{equation} for some $p_{y}\in \ZZ[v]$ (the $v^{-1}$ in equation (\ref{cases}) is the only problem). But if we define \begin{equation}b_x=b_sb_{sx}-\sum_{y<x} p_{y}(0)b_y,\end{equation}
we still obtain a self-dual element (a $\ZZ$-linear combination of self-dual elements) and it clearly is of the prescribed form, so this proves the existence of $b_x.$

\subsubsection{Uniqueness}

Suppose that we have two self-dual elements $c=h_x+h$ and $c'=h_x+h'$, with $h,h' \in \cH. $ Then one has that $h-h'$ is self-dual as $h-h'=c-c'.$

We just need to prove 
\begin{claim}
If $h\in \sum_yv\ZZ[v] h_y$ and $h$ is self-dual, then $h=0$. 
\end{claim}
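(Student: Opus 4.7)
The plan is to establish the claim by a triangularity argument with respect to the Bruhat order. The crucial ingredient, which I would first prove as an auxiliary lemma, is that the duality $d$ is unitriangular with respect to the standard basis: for every $y \in W$,
\[ d(h_y) \in h_y + \sum_{z < y} \mathbb{Z}[v, v^{-1}]\, h_z. \]

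I would prove this auxiliary statement by induction on $l(y)$. The base cases $y = e$ and $y = s \in S$ are immediate, since $d(h_e) = 1 = h_e$ and equation (\ref{bs}) already gives $d(h_s) = h_s + v - v^{-1}$. For the inductive step, write $y = sy'$ with $l(y') = l(y) - 1$, so that $sy' > y'$ and hence $h_y = h_s h_{y'}$. Since $d$ is a ring morphism,
\[ d(h_y) = d(h_s)\, d(h_{y'}) = (h_s + v - v^{-1})\Bigl(h_{y'} + \sum_{z' < y'} r_{z', y'}\, h_{z'}\Bigr). \]
Expanding, the leading term $h_s h_{y'} = h_{sy'} = h_y$ produces the required $h_y$; every other term is a $\mathbb{Z}[v,v^{-1}]$-multiple of $h_{y'}$, of some $h_{z'}$ with $z' < y'$, or of $h_s h_{z'}$ with $z' < y'$. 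The first two are automatically supported on $\{h_w : w < y\}$. For the third, apply the multiplication rules $h_s h_z = h_{sz}$ when $sz > z$ and $h_s h_z = h_{sz} + (v^{-1}-v)h_z$ when $sz < z$ (both easy consequences of equation (\ref{cases})); the Bruhat lifting property shows that $z' \leq y'$ and $sy' > y'$ imply $sz' \leq sy' = y$, and equality $sz' = y$ would force $z' = y'$, contradicting $z' < y'$. Hence $sz' < y$ and the induction closes.

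Once the lemma is proved, the claim follows essentially in one line. Suppose for contradiction that $h \neq 0$ is self-dual and $h = \sum_{y} p_y(v)\, h_y$ with each $p_y \in v\mathbb{Z}[v]$. Choose $y_0$ maximal in the Bruhat order among $\{y : p_y \neq 0\}$ (possible since the support is finite). Compute $d(h) = \sum_y p_y(v^{-1})\, d(h_y)$. By unitriangularity, $d(h_y)$ contributes to the coefficient of $h_{y_0}$ only when $y_0 \leq y$, and maximality forces $y = y_0$. Thus the coefficient of $h_{y_0}$ in $d(h)$ equals $p_{y_0}(v^{-1})$. Comparing with the coefficient of $h_{y_0}$ in $h$ gives $p_{y_0}(v) = p_{y_0}(v^{-1})$; but the left-hand side lies in $v\mathbb{Z}[v]$ while the right-hand side lies in $v^{-1}\mathbb{Z}[v^{-1}]$, and their intersection is $\{0\}$, contradicting $p_{y_0} \neq 0$.

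The main obstacle is the unitriangularity lemma rather than the finishing argument: once one accepts the Bruhat lifting property (namely, $z' \leq y'$ and $sy' > y'$ imply $sz' \leq sy'$), the induction is mechanical, but this property is the one external ingredient whose justification requires genuine Coxeter-theoretic input. The maximality argument at the end is then just a comparison of positive versus negative $v$-powers and presents no difficulty.
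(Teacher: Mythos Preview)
Your proof is correct and the overall architecture---establish unitriangularity of $d$ in the standard basis, then run a maximality argument---is exactly the paper's. The one methodological difference is in how the unitriangularity
\[ d(h_y) \in h_y + \sum_{z<y} \ZZ[v,v^{-1}]\, h_z \]
is obtained. You prove it directly by induction on $l(y)$, using the multiplication rules for $h_s h_z$ and the Bruhat lifting property. The paper instead harvests it from the existence part: having already built the self-dual elements $b_x$ satisfying (\ref{KL}), one inverts the triangular change of basis to get $h_x \in b_x + \sum_{y<x}\ZZ[v,v^{-1}] b_y$, applies $d$ (which fixes each $b_y$), and passes back to the standard basis. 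This buys the paper a shorter argument with no explicit appeal to the lifting property at this stage (it was already absorbed, implicitly, into the existence proof). Your route is more self-contained---it does not depend on existence having been proved first---at the cost of invoking the lifting property explicitly, which you correctly flag as the one genuine Coxeter-theoretic input. The concluding maximality argument is identical in both proofs.
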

Let us call $b_x$ the element that we constructed  in Section \ref{1}. It is easy to see, by equation (\ref{KL}) that $$h_x\in b_x+\sum_{y<x}\ZZ[v,v^{-1}]b_y,$$
thus \begin{equation}\label{d}d(h_x)\in b_x+\sum_{y<x}\ZZ[v,v^{-1}]b_y \subseteq  h_x+\sum_{y<x}\ZZ[v,v^{-1}]h_y.\end{equation}

Let us write $h=\sum_y p_yh_y$, with $p_y\in v\ZZ[v]$ and let $z$ be a maximal element (in the Bruhat order) such that $p_z\neq 0.$ In other words, $h=p_zh_z +\sum_{y\ngeq z} p_yh_y.$ By equation (\ref{d}) we obtain 
$$d(h)\in d(p_z)h_z +\sum_{y\ngeq z}\ZZ[v,v^{-1}] h_y.$$ As $h$ is self-dual, this implies that $d(p_z)=p_z,$ contradicting the fact that \newline $p_z\in v\ZZ[v]$ and thus proving the claim.

\subsubsection{ $\{b_x\}_{x\in W}$ is a basis}

The set $\{b_x\}_{x\in W}$ is a $\Z[v,v^{-1}]-$basis given that the set $\{h_x\}_{x\in W}$ is a basis and by using the triangularity property in Remark \ref{rem1}.
\end{proof}

\subsection{Our favorite open questions about Hecke algebras}

It is one of the most important achievements in the theory of Soergel bimodules the proof \cite{EW} by B. Elias and G. Williamson of the conjecture (stated by Kazhdan and Lusztig) that Kazhdan-Lusztig polynomials have positive coefficients. We will come back to this beautiful and fundamental result in a subsequent paper of this saga. For the moment, this theorem gives rise to the following question. 

\begin{ques}Give a combinatorial formula for the coefficients of Kazhdan-Lusztig polynomials, i.e. express the coefficients of $p_{x,y}$ as the cardinality of some combinatorially defined set (even for the symmetric group this would be extremely interesting).
\end{ques}

For the second question we have to introduce the \emph{Braid group} $B_W$ of a Coxeter system $(W,S)$. It is the group $$ \langle \sigma_s, s\in S\ \vert \ (\sigma_s\sigma_r)^{m_{sr}}=e\ \ \mathrm{if}\  s\neq r \in S \ \mathrm{and}\  m_{sr} \ \mathrm{is}\ \ \mathrm{finite} \rangle.$$
The fact that we impose $s\neq r$ is equivalent to not ask that $s^2=e$ for all $s\in S.$ Thus the braid group of $W$ is infinite unless $W$ is trivial.

\begin{ques}\label{braid} Is the following group morphism
\begin{align*}
B_W&\rightarrow \cH(W)\\
\sigma_s&\mapsto h_s
\end{align*}
an injection?
\end{ques}

There is a categorical version of this question due to Rouquier. He conjectures that ``the braid group injects in the 2-braid group''. We will explain this conjecture in detail in a subsequent paper. But we must say that  Question \ref{braid} implies the conjecture of Rouquier.

\section{Categorical level in the baby example A}\label{babyA}

\subsection{The objects: Soergel bimodules}

In this section we will introduce Soergel bimodules  in the baby example A, i.e. the symmetric group $S_3.$ This case is hard enough to start with and most of the features of general Soergel bimodules are already visible in this example. 

 Consider the polynomial ring $R=\mathbb{R}[x,y,z]$ (we could replace $\RR$ by any field of characteristic different from $2$ in this section and the results would stay true). We have a natural action of $S_3$ on $R$. The simple reflection $s$ interchanges $x$ and $y$. In formulas $$s\cdot f(x,y,z)=f(y,x,z).$$ The simple reflection $r$ interchanges $y$ and $z$.  So $R^s$ (the subset of $R$ fixed by the action of $s$) is the polynomial ring $\mathbb{R}[x+y, xy, z]$ and $R^r=\mathbb{R}[x, y+z,yz].$ The subring fixed by both simple reflections $s$ and $r$ (or, what is the same, by the whole group $S_3$) is $$R^{s,r}=\mathbb{R}[x+y+z,xy+xz+yz,xyz].$$ 

 If we want to enter into Soergel-bimoduland (this is an invented word) we have to take the grading into account. For technical reasons we need $R$ to have the usual $\mathbb{Z}$-grading multiplied by two. So $x,y$ and $z$ will be in degree $2$ (there are no elements of odd degree). The polynomials $x^2$ and $xz$ have degree $4$, the polynomial $3xy^2z^7$ has degree $20.$ And we define the ring $R$ shifted ``down'' by one $R(1)$ by declaring that $x$ is in degree $1$,  $x^2$ in degree $3$ and $3xy^2z^7$ in degree $19.$ Formally, if $B=\oplus_{i\in \mathbb{Z}}B_i$ is a graded object, we declare that the shifted object $B(m)$ in degree $i$ is $B(m)_i:=B_{m+i}.$

The $\mathbb{Z}$-graded $R-$bimodule $R$ is the easiest example of a Soergel bimodule.  The second example of a Soergel bimodule is  the  $\mathbb{Z}$-graded $R-$bimodules $B_s:=R\otimes_{R^s}R(1)$, for $s\in S$. Just for pedagogical reasons we insist that in $B_s$ the elements $x\otimes y$ and $z^2\otimes1 +1\otimes xz$ have degree $3$ and $x^3\otimes zx$ has degree $9.$

Another example is the product $B_s\otimes_R B_r$, that we will call (for reasons that will be clear later) $B_{sr}.$ Another example of  a Soergel bimodule for $S_3$ is the $\mathbb{Z}$-graded $R-$bimodule $B_{srs}:=R\otimes_{R^{s,r}}R(3)$.

We will use the convention that if $M$ and $N$ are two $R-$bimodules then their ``product'' is defined by $$MN:=M\otimes_R N.$$

We can now introduce the category of Soergel bimodules $\mathcal{B}(S_3)$ in our baby example $S_3$. They are $\mathbb{Z}$-graded $R-$bimodules that are isomorphic to direct sums and grading shifts of the following  set of $\mathbb{Z}$-graded $R-$bimodules
$$\II=\{R, B_s, B_r, B_{sr}, B_{rs}, B_{srs}\}.$$

\textbf{Philosphy: } \emph{One should think of $B_s$, $B_sB_r$ and $B_{srs}$ as analogous objects  to the elements $b_s$,  $b_{sr}$ and   $b_{srs}$ respectively in the Hecke algebra. One should also think of the product (resp. direct sum) between Soergel bimodules as an analogue of  product (resp. sum) in the Hecke algebra. Shifting the degree of a Soergel bimodule by one should be seen as multiplying the corresponding element in the Hecke algebra by $v$. We will make  this statement precise at some point.  }

This philosophy will gently emerge in the following pages. 
Recall that the Hecke algebra $\cH(S_3)$ is free over $\ZZ[v,v^{-1}]$ with basis $$\{1, b_s, b_r,b_{sr}, b_{rs},b_{srs}\}.$$  

\subsection{Stability of Soergel bimodules}

\subsubsection{The crucial phenomena}\begin{st}\label{st}
The category $\mathcal{B}(S_3)$ is stable under product. 
\end{st}

\begin{proof}
It is obvious that we just need to prove that we can write any product of two elements of $\II$ as a direct sum of shifts of elements in $\II.$

One important fact about $R$ is that if $p\in R$, then $p-s\cdot p \in (y-x)R^s.$  For example, if $p=3xy^2z^7+ yz$, 
 \begin{align*}
p-s\cdot p &=3xy^2z^7+ yz -3yx^2z^7- xz \\
&= (y-x)(3xyz^7+z).
\end{align*}
 
 It is an easy exercise to convince oneself of this fact (hint: start with monomials). One can also see this fact more conceptually by noticing that the polynomial $p-s\cdot p$ vanishes in the hyperplane defined by the equation $y=x$. 
The same result stands for $r$. The element $p-r\cdot p$ is  $(z-y)$ multiplied by some element of $R^r.$ We define $\alpha_s:=y-x$ and $\alpha_r:=z-y$. If we define $$P_s(p)=\frac{p+s\cdot p}{2} \in R^s \  \ \mathrm{ and  } \ \  \del_s(p)=\frac{p-s\cdot p}{2\alpha_s} \in R^s$$
 then we have the decomposition \begin{equation}\label{p}p=P_s(p)+\alpha_s\del_s(p).\end{equation} This equality gives rise to an isomorphism of graded  $R^s$-bimodules
 \begin{equation}\label{Rgr}
 R\cong R^s\oplus R^s(-2).
 \end{equation}

As a direct consequence of this equation we obtain the isomorphism
\begin{eqnarray}\label{BsBs}
B_sB_s &\cong& R\otimes_{R^s}R\otimes_{R^s}R(2) \nonumber\\
&\cong&  R\otimes_{R^s}R(2)\,  \oplus \,  R\otimes_{R^s}R \nonumber\\
&=& B_s(1)\oplus B_s(-1)
\end{eqnarray}
Compare this isomorphism with the equality $b_sb_s=vb_s+v^{-1}b_s$ in the Hecke algebra. 
We also obtain the following isomorphism
\begin{eqnarray}\label{BsBsrs}
B_sB_{srs} &\cong& R\otimes_{R^s}R\otimes_{R^{s,r}}R(4) \nonumber\\
&\cong&  B_{srs}(1) \oplus  B_{srs}(-1).
\end{eqnarray}
Compare this isomorphism with the equality $b_sb_{srs}=vb_{srs}+v^{-1}b_{srs}$ in the Hecke algebra. 

Let us recall a classic result of invariant theory. There is an isomorphism of $R^{s,r}$-bimodules (see, for example  \cite[ch. IV, cor. 1.11 a.]{Hi})
\begin{equation}\label{sr} R\cong \bigoplus_{w\in S_3}R^{s,r}(-2l(w))\end{equation}

This isomorphism implies  that 
\begin{equation}\label{w0}
B_{srs}B_{srs}\cong B_{srs}(-3)\oplus B_{srs}(-1)^{\oplus 2}\oplus B_{srs}(1)^{\oplus 2}\oplus B_{srs}(3)
\end{equation}
Compare this isomorphism with the equality $$b_{srs}b_{srs}=(v^{-3}+2v^{-1}+2v^{1}+v^3)b_{srs}$$ in the Hecke algebra.

To finish the proof of the Baby Stability Theorem \ref{st} we just need to prove the following isomorphism (that one should compare with the formula  in the Hecke algebra $b_sb_rb_s=b_{srs}+b_s).$
\subsection{$B_sB_rB_s\cong B_{srs}\oplus B_s$}\label{dec}

 For this we need first to define four morphisms of $R$-bimodules. The first one is the multiplication morphism $m_s\in \mathrm{Hom}(B_s, R)$
\begin{align*}
m_s: R\otimes_{R^s}R(1)&\rightarrow R\\
p\otimes q &\mapsto pq
\end{align*}
that is obviously a (degree $1$) morphism. The second morphism  $m_s^a\in \mathrm{Hom}(R, B_s)$ is its adjoint, in a sense that will become clear in Section \ref{adjunction} (thus explaining the notation  used for this morphism).
\begin{align*}
m_s^a: R &\rightarrow R\otimes_{R^s}R(1)\\
1 &\mapsto \alpha_s\otimes 1+ 1\otimes \alpha_s
\end{align*}

To check that this is  a (degree $1$) morphism of $R$-bimodules we need to check that for any $p\in R$ we have  $m_s^a(1)p=pm_s^a(1).$ This is
\begin{align*}
\alpha_s\otimes p+ 1\otimes \alpha_sp&=\alpha_sP_s(p)\otimes1+\alpha_s\del_s(p)\otimes\alpha_s+P_s(p)\otimes\alpha_s +\del_s(p)\alpha_s^2\otimes1\\
 &= p\alpha_s\otimes 1+p\otimes \alpha_s.
\end{align*}
For the first equality we used equation (\ref{p}) and the fact that $$\alpha_s^2=(x+y)^2-4xy\in R^s.$$
The following  morphism $j_s\in \mathrm{Hom}(B_sB_s, B_s)$ has  degree $-1$
\begin{align*}
j_s: R\otimes_{R^s}R\otimes_{R^s}R(2)&\rightarrow R\otimes_{R^s}R(1)\\
p\otimes q \otimes h&\mapsto p\del_s(q)\otimes h
\end{align*}
If it is a well defined map it is obvious that it is an $R$-bimodule morphism. So one just needs to check that $j_s(pr^s\otimes q \otimes h)=j_s(p\otimes r^s q \otimes h)$ and that $j_s(p\otimes q r^s\otimes h)=j_s(p\otimes  q \otimes r^sh)$, for any $r^s\in R^s.$ Both equations follow from the fact that $\del_s$ is a morphism of left (or right) $R^s$-modules (this is easy to check). 

And finally, the last morphism, the adjoint of $j_s$ (also of degree $-1$)
\begin{align*}
j_s^a: R\otimes_{R^s}R(1)  & \rightarrow   R\otimes_{R^s}R\otimes_{R^s}R(2)\\
p\otimes q & \mapsto p\otimes 1\otimes q
\end{align*}

\begin{nota}\label{cucu}
When it is clear from the context we will not write the identity morphisms. For example if we write $m_r$ for a morphism in $\mathrm{Hom}(B_sB_rB_s,B_sB_s)$ we mean $\mathrm{id}\otimes m_r\otimes \mathrm{id}.$ 
\end{nota}

\begin{ef}
The morphism $e:=   - m_r^a \circ j_s^a\circ j_s\circ m_r\in \mathrm{End}(B_sB_rB_s)$ is an idempotent (here we are using Notation \ref{cucu}).
\end{ef}
\begin{proof}
It is enough to check that $j_s\circ m_r\circ m_r^a \circ j_s^a=-\mathrm{id}\in \mathrm{End}(B_s),$ which is trivial. 
\end{proof}
From this fact we deduce that 
\begin{equation}\label{e}B_sB_rB_s=\mathrm{im}(1-e)\oplus \mathrm{im}(e),\end{equation} because if $e$ is an idempotent $1-e$ is easily checked to be an idempotent orthogonal to $e$. It is obvious that $m_r$ and $j_s$ are surjective and that $m_r^a$ and $j_s^a$ are injective morphisms. This implies that $$\mathrm{im}(e)\cong \mathrm{im}(m_r^a \circ j_s^a)\cong B_s.$$

To finish the proof we need to check the following isomorphism of graded $R$-bimodules
\subsection{$\mathrm{im}(1-e)\cong B_{srs}.$}\label{achi}
 
Let $1^{\otimes}:=1\otimes 1\otimes 1\otimes 1\in R\otimes_{R^s}R\otimes_{R^r}R\otimes_{R^s}R$
and let us denote  by $\langle 1^{\otimes} \rangle$   the $R$-bimodule generated by    $1^{\otimes}$.
We will prove the Lemma in two steps.

\subsubsection{Step 1}\ \textit{ We will prove that }$\mathrm{im}(1-e)= \langle 1^{\otimes} \rangle$.

As $(1-e)(1^{\otimes})=1^{\otimes}$ we have that $$\langle 1^{\otimes} \rangle  \subseteq    \mathrm{im}(1-e).$$
 It is a fun and easy exercice (using twice equation (\ref{p}) and some smart juggling with the variables) to see that  $B_sB_rB_s$ is generated as an $R$-bimodule by the two elements $1^{\otimes}$ and $1\otimes x\otimes 1\otimes 1$. We already know that $(1-e)(1^{\otimes})\in   \langle 1^{\otimes} \rangle$. We will do explicitly some of the juggling we said before to see that $ (1-e)(1\otimes x\otimes 1\otimes 1) \in\langle  1^{\otimes} \rangle$.

Firstly we note that in $B_sB_rB_s$ we have \begin{equation}\label{eq1}1\otimes y\otimes 1\otimes1 =(x+y)\otimes 1\otimes 1\otimes 1-1\otimes x\otimes 1\otimes 1\end{equation}
By definition we have
$$
(1-e)(1\otimes x\otimes 1\otimes 1)= 1\otimes x\otimes 1\otimes 1-\frac{1}{2}\otimes (z-y)\otimes 1\otimes 1 -\frac{1}{2}\otimes 1\otimes (z-y)\otimes 1$$
We apply equation (\ref{eq1}) in the second and third terms of the right hand side  and we obtain 
$$
(1-e)(1\otimes x\otimes 1\otimes 1)= \frac{1}{2} (x+y-z)\otimes 1\otimes 1\otimes 1 +1\otimes 1\otimes 1\otimes \frac{1}{2}(x+y-z).$$
Thus we conclude Step 1 of the proof.

\subsubsection{Step 2}\ \textit{ We will prove that }$ \langle 1^{\otimes} \rangle \cong B_{srs}$.

 It is clear that 
\begin{align*}
 R\otimes_{R^{s,r}}R(3)  & \rightarrow  B_sB_rB_s\\
p\otimes q & \mapsto p\otimes 1\otimes 1\otimes q
\end{align*}
is a graded $R$-bimodule morphism. It is also clear that the image is $  \langle 1^{\otimes} \rangle$. Because of the isomorphism (\ref{sr}) we have that 
$$B_{srs} \cong  R(-3)\oplus R(-1)^{\oplus 2}\oplus R(1)^{\oplus 2}\oplus R(3)$$
 as a graded left $R$-module. We know by (\ref{Rgr}) that $$B_s\cong R(-1)\oplus R(1)$$ as a graded  left-$R$ module and also 
\begin{align*}
B_sB_rB_s&\cong (R(-1)\oplus R(1))\otimes_R(R(-1)\oplus R(1))\otimes_R (R(-1)\oplus R(1))\\
&\cong R(-3)\oplus R(-1)^{\oplus 3}\oplus R(1)^{\oplus 3}\oplus R(3)
\end{align*}
By  (\ref{e}) we see that in each graded degree $B_{srs}$ and $\mathrm{im}(1-e)$ have the same dimension as finite dimensional $\RR$-vector spaces. As a surjective map between isomorphic vector spaces is an isomorphism, we conclude the proof of Lemma \ref{achi}, Proposition \ref{dec} and Theorem \ref{st}. 
\end{proof}

\subsection{Soergel's categorification Theorem}

\subsubsection{Indecomposables}
Why did we us the letter $\II$ to denote the set $\{R, B_s, B_r, B_{sr}, B_{rs}, B_{srs}\}$? Because they are indecomposable objects. Let us see why.

We will use the following notation. A $\mathrm{min}$ in the subindex of a graded object means the minimal $i$ for which its degree $i$ part is non-zero. For example $(B_s)_{\mathrm{min}}=(B_s)_{-1}$

As we have seen, every  element $M\in \II$ is generated as an $R$-bimodule by the element $1^{\otimes}\in M_{\mathrm{min}}$. Let us suppose that $M=N\oplus P$.  Then we have that $M_{\mathrm{min}}=N_{\mathrm{min}}\oplus P_{\mathrm{min}}$ as $\RR$-vector spaces, but the dimension of $M_{\mathrm{min}}$ over $\RR$ is one, so $M_{\mathrm{min}}$ is either $N_{\mathrm{min}}$ or $P_{\mathrm{min}}$ and thus $M$ is either $N$ or $P$. Thus $M$ is indecomposable. 

\textbf{Caution!}  \emph{It is a particularity of the  group $S_3$ that all the indecomposable Soergel bimodules are generated  by $1^{\otimes}$. For example in $S_4$ the indecomposable $B_{s_2s_1s_3s_2}$ is not generated by $1^{\otimes}$. }

\subsubsection{How to produce an algebra from Soergel bimodules}

Let us introduce the 
\begin{nota}\label{poly}
If $p=\sum_i a_i v^i \in \NN[v,v^{-1}]$ and $M$ is a graded bimodule,  we will denote by $p\cdot M$ the graded bimodule $$\bigoplus_{i\in \ZZ}M(i)^{\oplus a_i}$$
\end{nota}

One can see that Soergel bimodules encode all the information of the Hecke algebra. Let us try to make this idea more precise. Consider the following ``algebra'' $\NN\cB(S_3)$ over $ \NN[v,v^{-1}]$ (this is not a ring because it lacks of additive inverse, so $\NN\cB(S_3)$ is strictly speaking not an algebra, but apart from this ``detail'' it satisfies all the other defining properties of an algebra): the elements of $\NN\cB(S_3)$ are Soergel bimodules modulo isomorphism. We denote by $\langle M\rangle$ the isomorphism class of $M$. In this algebra, the sum is defined to be direct sum $\langle M\rangle+\langle N\rangle:=\langle M\oplus N\rangle$, the product is defined to be tensor product $\langle M\rangle \cdot \langle N\rangle:=\langle M N\rangle$ and multiplication by $v$ is shifting your graded bimodule by $1$, i.e. $v\cdot \langle M\rangle:= \langle M(1)\rangle$. 

\begin{prop}\label{iso}
There is an isomorphism of ``algebras''
\begin{align*}\NN\cB(S_3)&\cong \bigoplus_{w\in S_3}\NN[v,v^{-1}]b_w\subset \cH =\bigoplus_{w\in S_3}\ZZ[v,v^{-1}]b_w\\ 
\langle B_w\rangle &\mapsto b_w.
\end{align*}
\end{prop}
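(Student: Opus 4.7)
The strategy is to show that the assignment $\langle B_w \rangle \mapsto b_w$ extends uniquely to an $\NN[v,v^{-1}]$-linear bijection and then to check that it is multiplicative. Compatibility with the $v$-action and with direct sums is built into the definition of $\NN \cB(S_3)$; the substance of the proof lies in well-definedness, bijectivity, and multiplicativity.

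For well-definedness and bijectivity I would invoke a graded Krull--Schmidt theorem for $\cB(S_3)$. Existence of a decomposition into shifts of the six objects of $\II$ is the very definition of $\cB(S_3)$. For uniqueness of such a decomposition I would use three ingredients: (i) each $M \in \II$ is indecomposable (proved just before the proposition via the $1^{\otimes}$-generation argument); (ii) the degree-$0$ endomorphism ring of each $M \in \II$ is $\RR$, because a degree-$0$ bimodule endomorphism is determined by its value on $1^{\otimes} \in M_{\mathrm{min}}$ and $M_{\mathrm{min}}$ is one-dimensional, so the graded endomorphism rings are graded local; and (iii) no shift of one element of $\II$ is isomorphic to a shift of another, which can be read off from their distinct graded ranks as left $R$-modules (via (\ref{Rgr}) and (\ref{sr})) and, in the two pairs sharing a left-rank (namely $B_s,B_r$ and $B_{sr},B_{rs}$), from the different annihilator ideals in $R \otimes_{\RR} R$. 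Together with the finite-dimensionality of Hom spaces in each degree, this yields the graded Krull--Schmidt statement, so $\NN \cB(S_3)$ is free of rank $6$ as an $\NN[v,v^{-1}]$-module on $\langle R\rangle, \langle B_s\rangle, \langle B_r\rangle, \langle B_{sr}\rangle, \langle B_{rs}\rangle, \langle B_{srs}\rangle$. The target is free on $\{b_w\}_{w \in S_3}$ by Theorem \ref{thKL}, and the map is a bijection.

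For multiplicativity it suffices to verify $\langle B_x \rangle \cdot \langle B_y \rangle = b_x b_y$ on all $6 \times 6$ basis pairs. The row and column of $R$ are trivial. The backbone calculations are already in the proof of Theorem \ref{st} and match the Hecke relations: $B_s B_s \cong B_s(1)\oplus B_s(-1)$ against $b_s^2 = (v+v^{-1}) b_s$ (and the $r$ analogue); $B_s B_r B_s \cong B_{srs} \oplus B_s$ against $b_s b_r b_s = b_{srs} + b_s$; $B_s B_{srs} \cong B_{srs}(1)\oplus B_{srs}(-1)$ against $b_s b_{srs} = (v+v^{-1}) b_{srs}$; and (\ref{w0}) against $b_{srs}^2 = (v^{-3}+2v^{-1}+2v+v^3) b_{srs}$. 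The remaining pairs such as $B_{sr} B_{sr}$, $B_{sr} B_{rs}$, $B_{srs} B_r$ reduce to the above by associativity (remember $B_{sr} = B_s B_r$ by definition) together with the $s\leftrightarrow r$ symmetry, which swaps $B_{sr} \leftrightarrow B_{rs}$ and fixes $B_{srs}$ since $srs = rsr$. The corresponding Hecke products expand in exactly the same way.

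The main obstacle is the uniqueness of decomposition into indecomposables: without it the rule $\langle B_w \rangle \mapsto b_w$ is not even well-defined on isomorphism classes, and all the higher structure becomes meaningless. Once graded Krull--Schmidt is in hand, the multiplicativity reduces to a finite tabulation whose entries can all be assembled from the isomorphisms already obtained in the proof of the Baby Stability Theorem.
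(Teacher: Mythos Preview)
Your proposal is correct and follows essentially the same route as the paper: reduce multiplicativity to the bimodule isomorphisms established in the Baby Stability Theorem, and obtain freeness/well-definedness from a Krull--Schmidt argument. The only difference is one of detail: the paper simply cites Pierce for Krull--Schmidt in the category of graded finitely generated $R\otimes_{\RR} R$-modules, whereas you unpack the argument by hand (graded-local endomorphism rings, pairwise non-isomorphism of the $B_w$), which in fact makes explicit a point the paper leaves implicit.
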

\begin{proof}

The  equations (\ref{BsBs}), (\ref{BsBsrs}),  (\ref{sr}),  (\ref{w0}) and  Proposition \ref{dec} tell us that the multiplication rules in both algebras are equal. 
The only problem that might appear is that $\NN\cB(S_3)$ might not be free as a $\NN[v,v^{-1}]$- module over the set $\langle B_w\rangle_{w\in S_3}$, i.e. one might have that $$\sum_{w\in S_3}p_w \langle B_w\rangle\cong \sum_{w\in S_3}q_w \langle B_w\rangle $$ with $p_w, q_w\in \NN[v,v^{-1}]$. But this is not possible because we have that the category of $\ZZ$-graded finitely generated $R$-bimodules  (or, what is the same, $R\otimes_{\RR}R$-modules) admits the Krull-Schmidt Theorem. For a proof see \cite[Section 5.4]{Pi}.
\end{proof}

\subsubsection{Recovering the Hecke algebra}

If we want to produce  an algebra isomorphic to $\cH$ starting with the category of Soergel bimodules and not just ``the positive part'', we just need to add formally to $\NN\cB(S_3)$ the element $-\langle M\rangle$ for every  Soergel bimodule $M$.  This element will satisfy the equation $-\langle M\rangle+\langle M\rangle=0.$ In this manner we obtain a honest algebra over $ \ZZ[v,v^{-1}]$, isomorphic to $\cH.$
 One formal way of doing this is  with the following general definition. 
\begin{defi} Let $\cA$ be an additive category. The \emph{split Grothendieck group} of $\cA$ denoted by $\langle \cA\rangle$ is the free abelian group over the objects modulo the relations $M=N+P$ whenever we have $M\cong N\oplus P.$ Given an object $A\in \cA$, let $\langle A\rangle$ denote its class in $\langle \cA\rangle$.
\end{defi}

In the case of $\langle \cB(S_3)\rangle$, this group can be endowed with a structure of $ \ZZ[v,v^{-1}]$-algebra, as we have seen (addition is direct sum, product is tensor product, etc). 

\begin{sca} If  $W$ is $S_3$ we have
\begin{itemize}
\item  The set $W\times \ZZ$ is in bijection with the set of indecomposable Soergel bimodules via the map $$(w,m)\mapsto B_w(m)$$
\item The map \begin{align*}\langle \cB(S_3)\rangle& \rightarrow \cH(S_3)\\ \langle B_w\rangle &\mapsto b_w\\ \langle R(1)\rangle&\mapsto v \end{align*} is an isomorphism of $\ZZ[v,v^{-1}]$-algebras. 
\end{itemize}
\end{sca}

\begin{proof}
The first part has already been proved. The second one also, modulo the remark that before we used the notation $\langle N\rangle =\langle M\rangle$ if $N$ and $M$ are isomorphic, and now we are using it for two elements equal in the Grothendieck group and this could be confusing notation. In fact, due to the Krull-Schmidt property explained in the proof of \ref{iso} these two notations mean the same thing. 
\end{proof}

The morphisms between Bott-Samelson bimodules (i.e. bimodules of the form $B_sB_r\cdots B_t$ for $s,r, \ldots, t\in S$) in principle could be quite complicated, or even atrocious, but we are, oh so very lucky. Two miracles happen. Firstly the Hom spaces are free as $R$-modules. This  is highly non-trivial. 

The second miracle is that there is a combinatorial set (defined by the author in \cite{Li1}) in the Hom spaces, called ``Light leaves'' that is a basis of this free Hom space. In the next section we will introduce Soergel bimodules for any Coxeter group and we will explain  the construction of light leaves before we  can calculate the indecomposable Soergel bimodules for the examples B, C and D in Section \ref{final}.

\section{Soergel bimodules and light leaves in ranks 1 and 2}\label{mor}

We will present the general definition of the category of Soergel bimodules, but we will still work over the field of real numbers. Over a field of positive characteristic  the categorification theorem still works, but Soergel bimodules behave quite differently (projectors from the Bott-Samelsons are not the same as over $\RR$). 

We will recall most of  the definitions given  in the last section to make this section independent of the last one.

\subsection{Soergel category $\cB$ for any Coxeter system over $\RR$}\label{any}
Let $(W,S)$ be an arbitrary Coxeter system. Consider $V=\oplus_{s\in S} \mathbb{R}e_s$ the Geometric Representation. It is a linear representation defined by the formula 
$$s\cdot e_r=  e_r+2\, \mathrm{cos}\, \biggl(\frac{\pi}{m_{sr}}\biggr) e_s \hspace{1cm}\mathrm{for\ all\ }s, r \in S,$$
where $m_{sr}$ is the order of the element $sr$ in $W.$ By convention $\pi/\infty =0.$

Let $R=R(V)$\label{d1}  be the algebra of regular functions on $V$ with  the grading induced by putting $V^*$ in degree two, i.e. $R=\bigoplus_{i\in
\mathbb{Z}}R_i$ with  $R_2 = V^*$ and $R_i=0$ if $i$ is odd. The 
action of $W$ on $V$ induces an action on $R$.

For any $\mathbb{Z}$-graded object $M=\bigoplus_i M_i,$ and every  $n\in \mathbb{Z}$, we denote by $M(n)$ the \emph{shifted object} defined by the formula $$(M(n))_i=M_{i+n}.$$

 With this notation in hand we can define, for $s\in S,$  the  $\mathbb{Z}-$graded  $R-$bimodule $$B_s=R\otimes_{R^s} R(1),$$ where $R^s$ is the subspace of $R$ fixed by $s$.   
Given $M,N\in \mathcal{B}$ we denote  their tensor product simply by juxtaposition: $M N := M \otimes_R N$.

If $ \underline{s}=(s_1,\ldots, s_n)\in {S}^n,$ we will denote by $B_{\underline{s}}$ the  $\mathbb{Z}-$graded $R-$bimodule $${B}_{s_1}{B}_{s_2}\cdots {B}_{s_n}\cong {R}\otimes_{{R}^{s_1}}{R}\otimes_{{R}^{s_2}}\cdots \otimes_{{R}^{s_n}}{R}(n).$$ We use the convention $B_{(\mathrm{id})}=R.$ Bimodules of the type $B_{\underline{s}}$ are called  \emph{Bott-Samelson bimodules.}

 The category of  \textit{Soergel bimodules} $\mathcal{B}=\mathcal{B}(W,S)$ is the full sub-category  of $\mathbb{Z}-$graded $R-$bimodules, with objects the shifts of finite direct sums of direct summands of Bott-Samelson bimodules.

For every essentially small additive category $\mathcal{A}$, we call  $\langle\mathcal{A}\rangle$ its \textit{split Grothendieck 
group}. It is the free abelian group generated by the objects of  $\mathcal{A}$ modulo the relations $M=M'+M''$ whenever we have $M\cong M'\oplus M''$. Given an object $A\in \mathcal{A},$ let  $\langle A \rangle$ denote its class in $ \langle \mathcal{A}
\rangle$.

In \cite{So0} Soergel proves \emph{Soergel's categorification theorem} (the version of all these results for the geometric representation explained here is proved in \cite{li2}), which consist of two statements. Firstly, that there exist a unique ring isomorphism, the \emph{character map}
$\mathrm{ch}:  \langle \mathcal{B}\rangle \rightarrow \mathcal{H},$
such that $\mathrm{ch}(\langle R(1)\rangle)=v$ and $\mathrm{ch}(\langle B_s \rangle)=(h_s+v)$.
Secondly, there is a natural bijection between the set of indecomposable Soergel bimodules and the set $W\times \mathbb{Z}.$ We call $B_x$ the indecomposable Soergel bimodule corresponding to $(x,0)$ under this identification.

\begin{ques} Define an analogue of Soergel bimodules for  complex reflection groups.
\end{ques}

Soergel also proves that if $sr\cdots t$ is a reduced expression for $x\in W$ then 
one has (recall Notation \ref{poly})$$ B_sB_r\cdots B_t\cong B_x\oplus \bigoplus_{y<x}q_y\cdot  B_y\hspace{.3cm}\mathrm{with }\ q_y\in \NN[v, v^{-1}].  $$

This formula, plus the fact that $B_x$ does not appear in the decomposition of any other Bott-Samelson of lesser length, gives a unique characterization of $B_x$. The following theorem, conjectured by Soergel in the early nineties is amazingly beautiful and it is probably the most powerful result in the theory. We will come back to it in the follow-ups of this paper. 

\begin{thm}[Elias and Williamson \cite{EW}]
$\mathrm{ch}(\langle B_x \rangle)=b_x.$
\end{thm}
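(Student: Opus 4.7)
The plan is to characterize $\mathrm{ch}(\langle B_x\rangle)$ by the uniqueness part of Theorem \ref{thKL} (sharpened in Remark \ref{rem1}). That uniqueness says that the self-dual element lying in $h_x + \sum_{y<x} v\mathbb{Z}[v]\, h_y$ is forced to be $b_x$. So my job reduces to verifying three things about $\mathrm{ch}(\langle B_x\rangle)$: it is self-dual, it has the correct leading term $h_x$ plus lower Bruhat terms, and the lower terms live in $v\mathbb{Z}[v]$ (no constant or negative-power contributions).

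The first two steps I would treat as the ``easy'' inputs. For self-duality, I would introduce on $\mathcal{B}$ the standard graded duality (``take the graded $R$-dual of left $R$-modules, then dualize again on the right, with an appropriate shift'') and check on the generators $B_s$ that this contravariant functor sends $B_s$ to $B_s$ and is compatible with tensor product. Then every indecomposable $B_x$, which is a direct summand of a Bott--Samelson, is forced to be self-dual up to shift, and the normalization picks out the right shift so that $\mathrm{ch}$ intertwines the bimodule duality with the bar involution $d$ on $\mathcal{H}$. For the triangular form, I would use Soergel's ``standard filtration'': filter any Soergel bimodule by Bruhat cells, so that the associated graded pieces are free of finite graded rank over the appropriate $R \otimes R^w$--type pieces. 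For a reduced expression $sr\cdots t$ of $x$, the top cell in $B_{\underline{x}}$ contributes a single copy of $h_x$, and this copy survives into the unique indecomposable summand $B_x$, proving that $\mathrm{ch}(\langle B_x\rangle) \in h_x + \sum_{y<x}\mathbb{Z}[v,v^{-1}]\, h_y$, with coefficients recorded by graded multiplicities.

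The main obstacle is positivity: the statement that these graded multiplicities lie in $v\mathbb{N}[v]$ (equivalently, that $B_x$ has no summand $B_y$ in cohomological degree zero or lower in the filtration). Once positivity holds, self-duality plus the triangular form place $\mathrm{ch}(\langle B_x\rangle)$ inside the narrow corridor $h_x + \sum_{y<x} v\mathbb{Z}[v]\,h_y$ of Remark \ref{rem1}, and the KL uniqueness collapses it onto $b_x$. Positivity, however, is genuinely hard: it is the content of Elias--Williamson \cite{EW}. The route I would take is their one: set up a ``Soergel calculus'' diagrammatic model of $\mathcal{B}$, equip each indecomposable $B_x$ with an intersection-form-like pairing, and then prove by simultaneous induction on $\ell(x)$ the analogues for $B_x$ of the hard Lefschetz theorem and the Hodge--Riemann bilinear relations with respect to a generic Lefschetz operator built from $R_{+}$. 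The inductive step studies how tensoring with a $B_s$ interacts with these Hodge-theoretic properties; the positivity of KL coefficients then falls out as the nonnegativity of certain signatures in this Hodge--Riemann package. This ``Hodge theory of Soergel bimodules'' is where all the real work lives, and it is what I would expect to occupy the bulk of a full proof.
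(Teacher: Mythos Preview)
The paper does not prove this theorem: it states it, attributes it to Elias--Williamson \cite{EW}, and explicitly defers the discussion to ``the follow-ups of this paper.'' There is therefore no in-paper argument to compare your proposal against.

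That said, your outline is an accurate high-level summary of the strategy in \cite{EW}: reduce to the uniqueness clause of Theorem~\ref{thKL} (in the sharpened form of Remark~\ref{rem1}) by verifying that $\mathrm{ch}(\langle B_x\rangle)$ is (i) self-dual, (ii) Bruhat upper-triangular with leading term $h_x$, and (iii) has lower coefficients in $v\mathbb{Z}[v]$. Points (i) and (ii) are Soergel's inputs: self-duality of $B_x$ under the appropriate graded duality on $\mathcal{B}$, and the standard ($\nabla$-)filtration, which yields $\mathrm{ch}(\langle B_x\rangle)\in h_x + \sum_{y<x}\mathbb{N}[v,v^{-1}]\,h_y$. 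Point (iii) is precisely Soergel's conjecture, and you correctly locate its proof in the Hodge-theoretic package of \cite{EW} (hard Lefschetz and Hodge--Riemann bilinear relations for Soergel bimodules, established by a simultaneous induction on $l(x)$ in which the inductive step analyses tensoring with $B_s$). The only further comment the paper offers is the remark immediately following the theorem: Soergel's earlier reduction that it suffices to exhibit \emph{some} $M_x\in\mathcal{B}$ with $\mathrm{ch}(\langle M_x\rangle)=b_x$. This is an equivalent reformulation of the target statement, not an alternative proof route.
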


\begin{remark}
It is is a result of Soergel that it is enough to prove that for each $x\in W$ there is  $M_x\in \mathcal{B}$ with $\mathrm{ch}(\langle M_x \rangle)=b_x.$ 
\end{remark}

We will represent morphisms between Bott-Samelson bimodules by drawing them in a very specific way (in subsequent papers of this saga we will go deeply in the reasons of why do we draw the morphisms in such a manner).  Let us start with ``one color morphisms''.

\subsection{Drawing morphisms: one color}

We fix a simple reflection $s\in S.$  We will start by explaining how to represent  in a drawing a morphism between ``one color Bott-Samelson bimodules'', i.e. bimodules having only $s$ in its expression, for example $B_sB_sB_sB_s.$ 

Morphisms will be drawn inside the strip $\RR \times [0,1]\subset \RR^2.$ This will be done in a bottom-up way, i.e. in the line $\RR \times \{0\}$ we will draw the same number of points as the number of $B_s$ that appear in the source of our morphism and in the line $\RR \times \{1\}$ we will draw the same number of points as the number of $B_s$ that appear in the target of our morphism (the bimodule $R$ is represented by the empty sequence). Here a list of examples were the lower black line is always $\RR \times \{0\}$ and the upper black line is $\RR \times \{1\}$ (recall the morphisms in \ref{dec}).

 \begin{figure}[H] 
\begin{center}
 \includegraphics[scale=0.25]{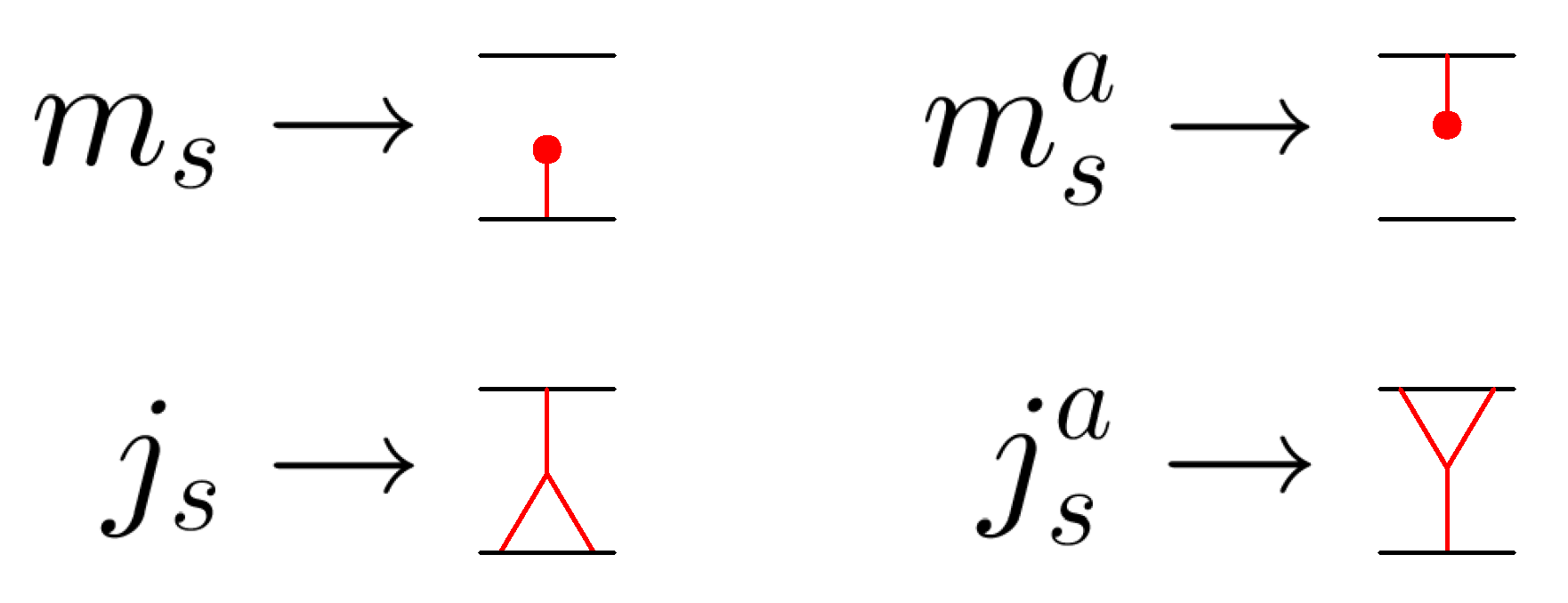} 
\caption{Important morphisms}  
\end{center}
\end{figure} 
We call $m_s$ and $m_s^a$ the \emph{dots} and $j_s$ and $j_s^a$ the \emph{trivalent vertices}. We will not longer draw the lines$\RR \times \{0\}$ and $\RR \times \{1\}$, but we will assume that they exist. 
A tensor product of morphisms is represented by glueing pictures, for example
 \begin{figure}[H] 
\begin{center}
 \includegraphics[scale=0.15]{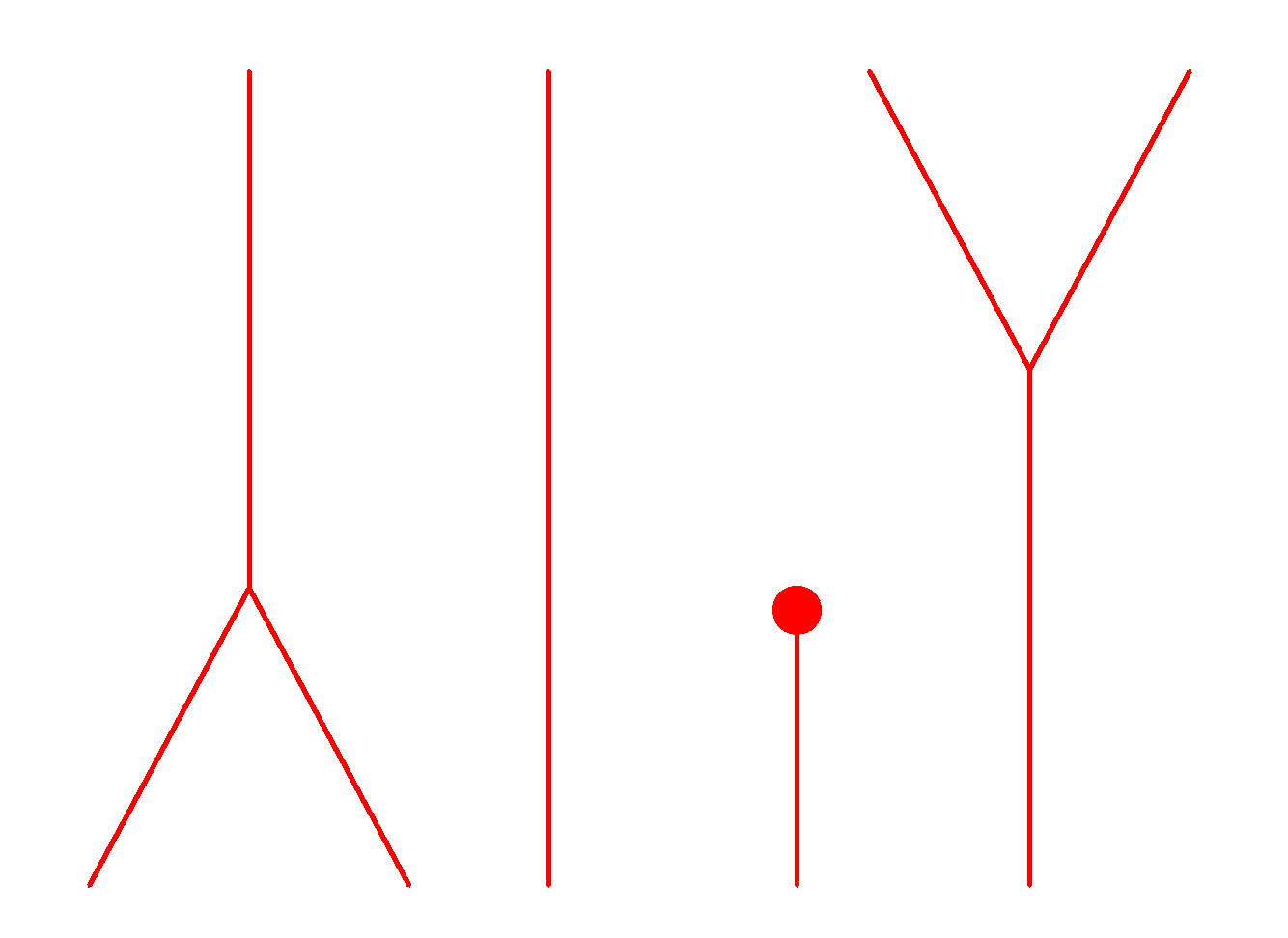} 
\caption{Tensor product}  
\end{center}
\end{figure} 
 represents the morphism 
$ j_s\otimes \mathrm{id}\otimes m_s\otimes j_s^a:B_sB_sB_sB_sB_s\rightarrow B_sB_sB_sB_s$.
Composition is represented by glueing  bottom-up in the obvious way. For example 
 \begin{figure}[H] \begin{center}
 \includegraphics[scale=0.2]{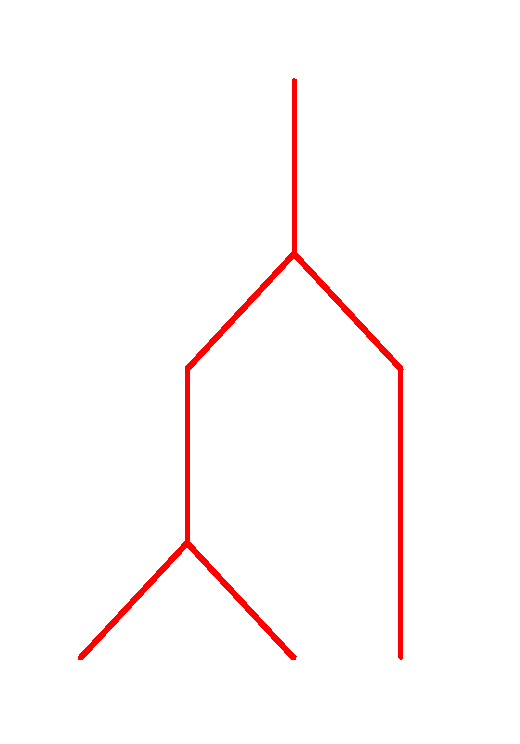} 
\caption{Composition}  
\end{center}
\end{figure} 
represents the morphism $j_s\circ (j_s\otimes \mathrm{id})$. 
 We introduce the following notation 
\begin{figure}[H] \begin{center}
 \includegraphics[scale=0.25]{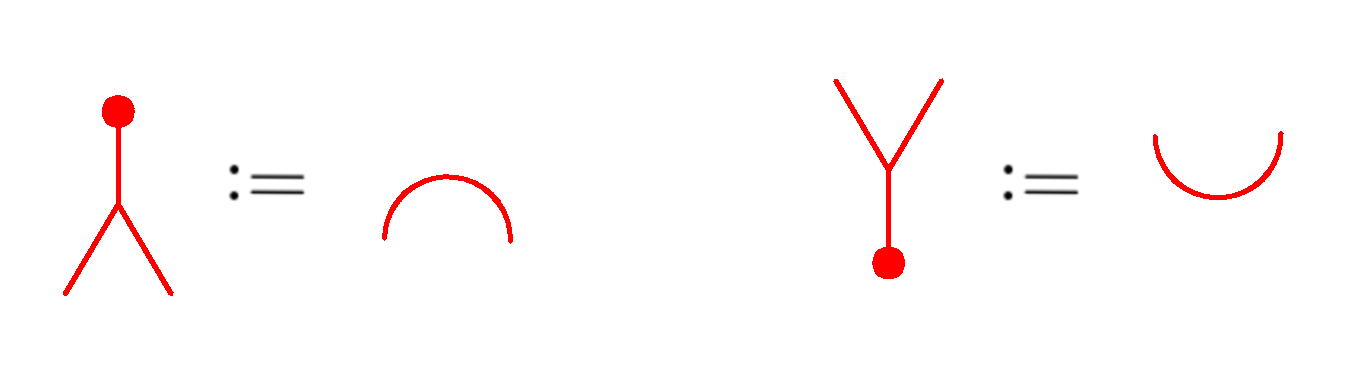} 
\caption{Cup and cap}  
\end{center}
\end{figure} 
The left-hand side morphism will be called the \emph{cup} and the right-hand side the \emph{cap}. It is easy to verify that $j_s\circ (j_s\otimes \mathrm{id})=j_s\circ ( \mathrm{id}\otimes j_s)$, or in pictures 
\begin{figure}[H] \begin{center}
 \includegraphics[scale=0.15]{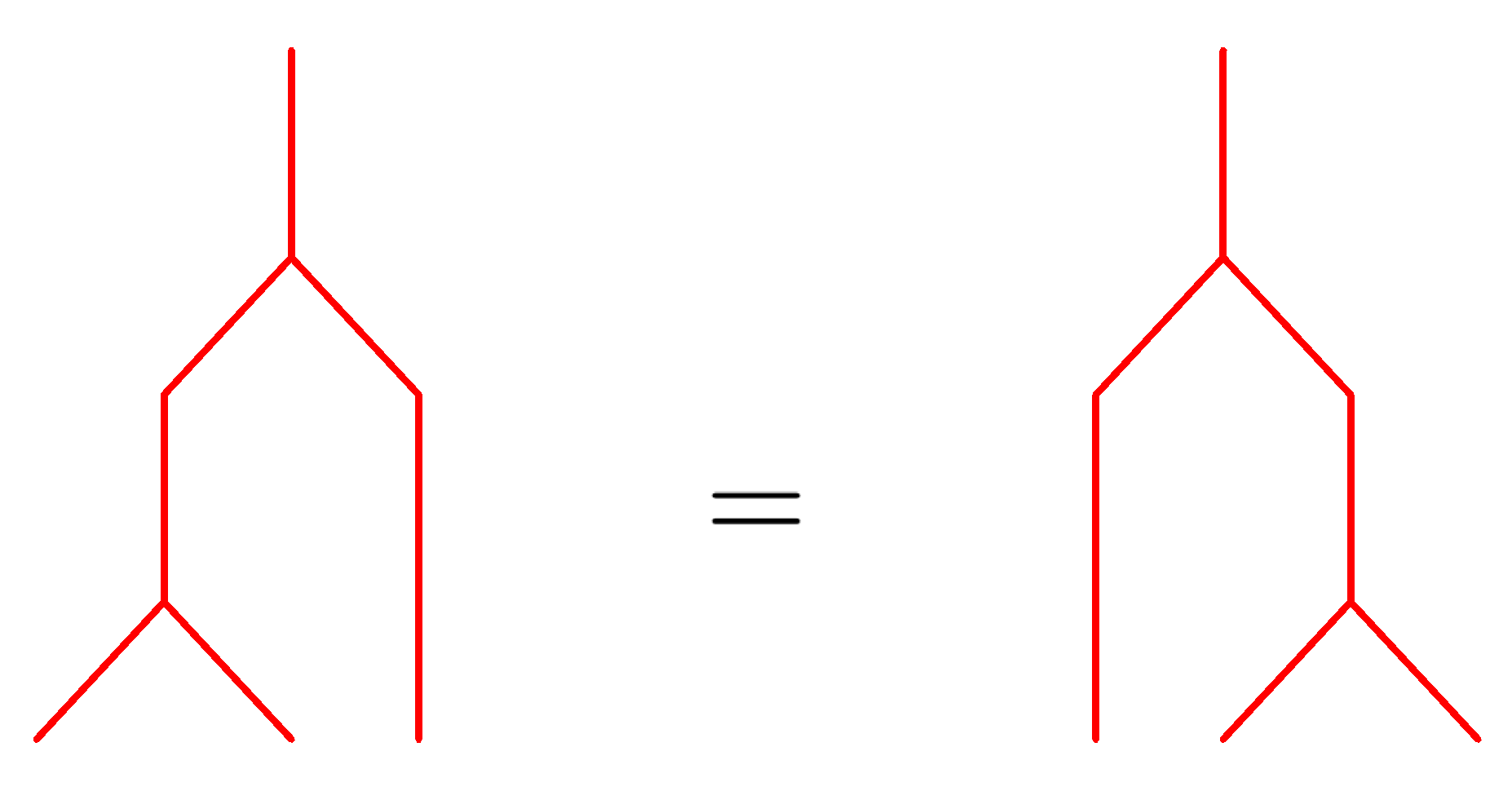} 
\caption{Associativity}  
\end{center}
\end{figure} 
This, of course means that any composition of $j_s$ tensored by identities in any order will give the same morphism, for example 
\begin{figure}[H] \begin{center}
 \includegraphics[scale=0.25]{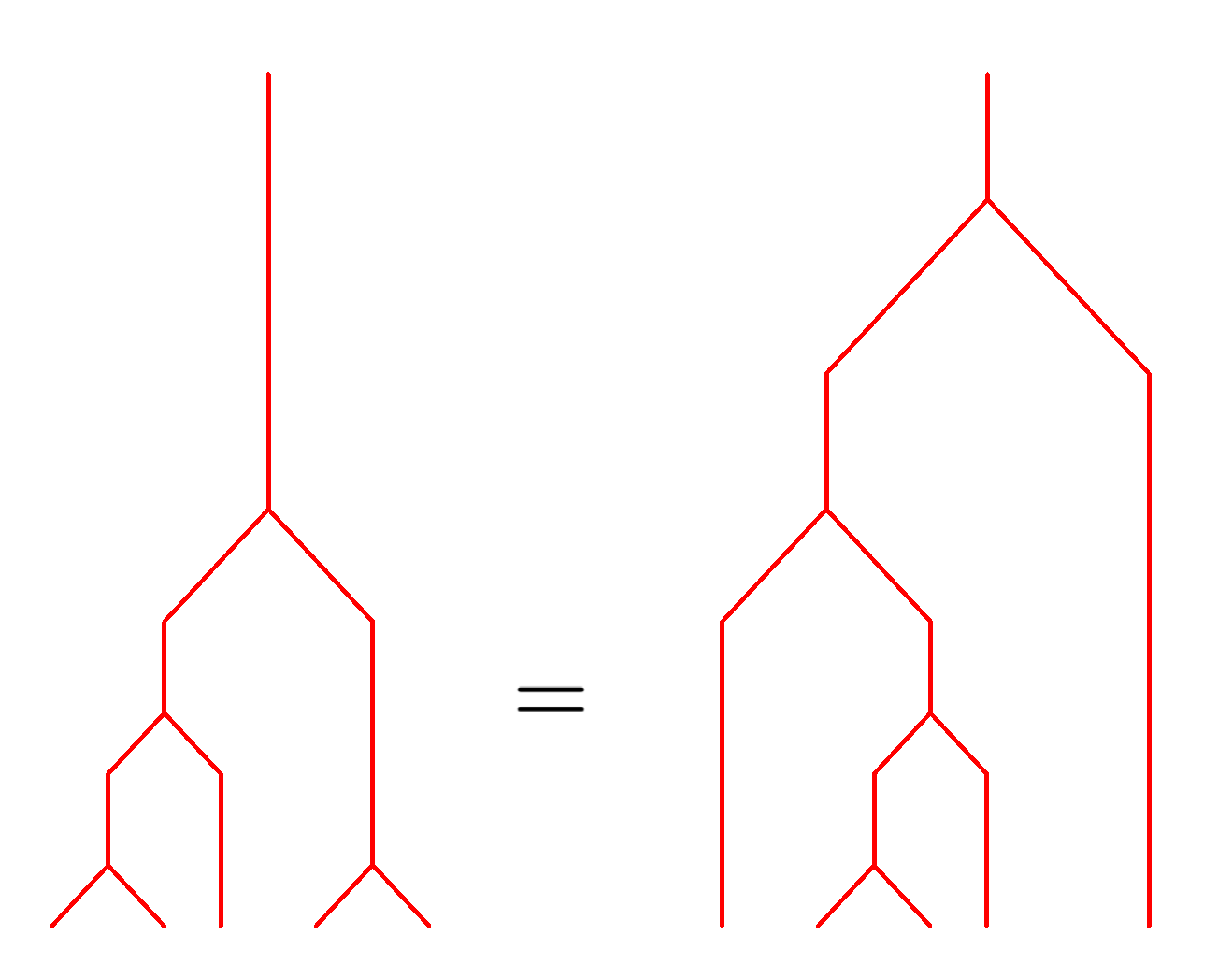} 
\end{center}
\end{figure} 
Given that there is no ambiguity and all these pictures represent the same morphism, we will denote this morphism by a comon picture that we call the \emph{hanging birdcage} \footnote{The ``birdcage'' terminology is non-standard and used for the first time in this paper.}
\begin{figure}[H] \begin{center}
 \includegraphics[scale=0.25]{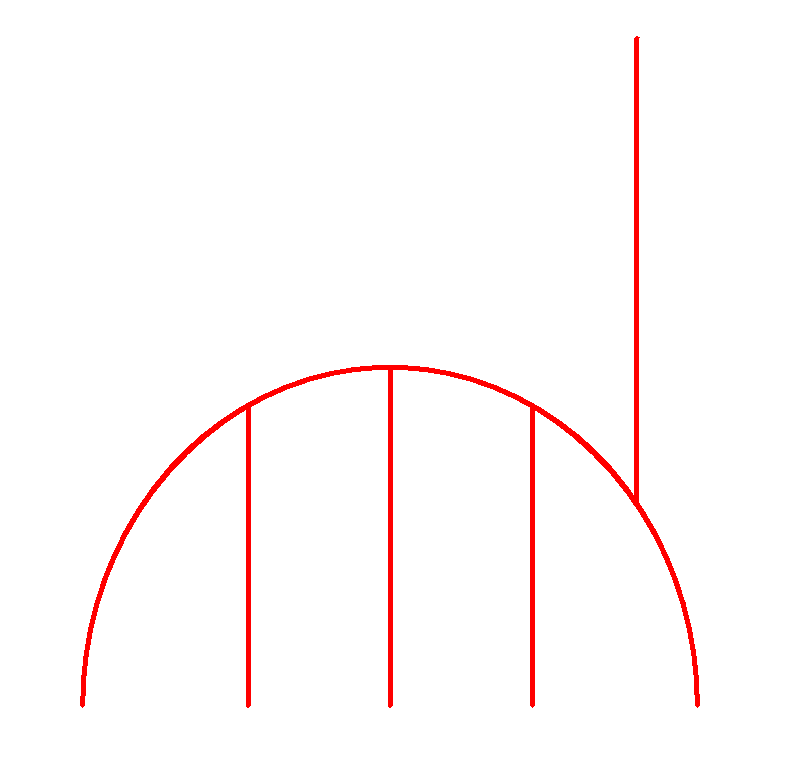} 
\caption{Hanging birdcage}  
\end{center}
\end{figure} 

The identity of $B_s$ (a vertical line) is also considered a hanging birdcage. 
If we compose this morphism with a dot we obtain the \emph{non-hanging birdcage} or simply, the \emph{birdcage.}
\begin{figure}[H] \begin{center}
 \includegraphics[scale=0.25]{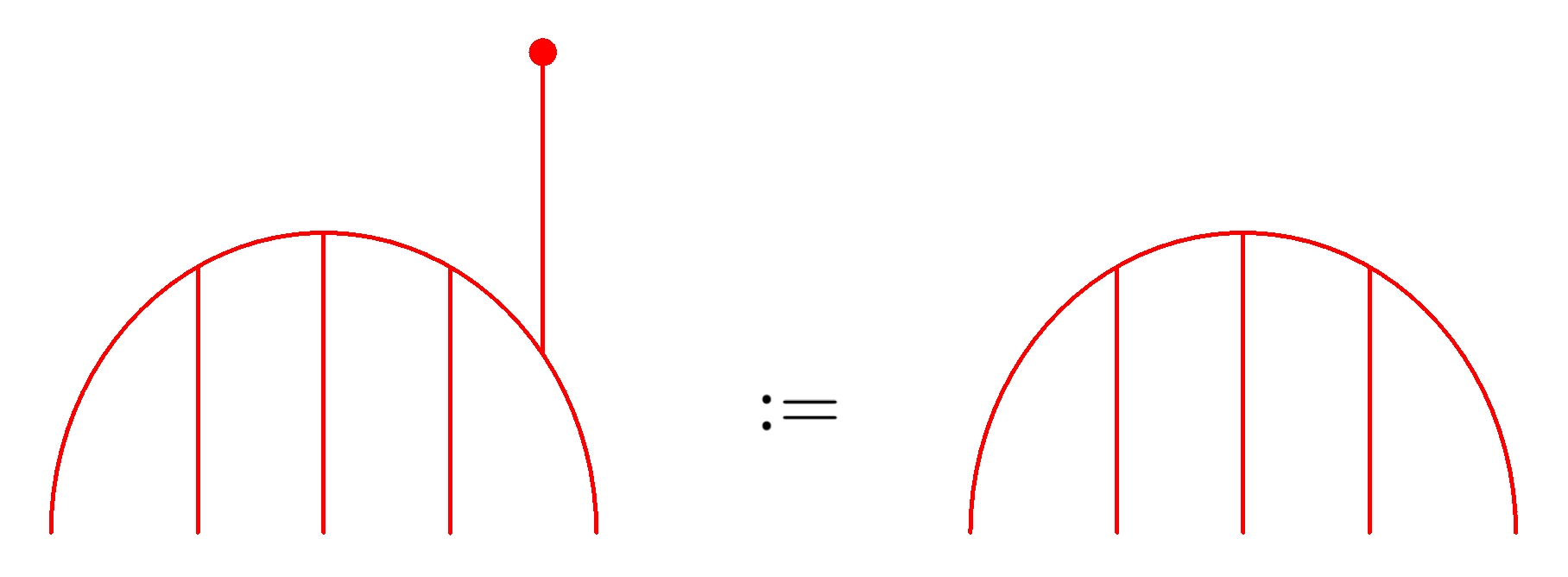} 
\end{center}
\caption{Birdcage}  
\end{figure} 
The \emph{length} of a birdcage is the number of $B_s$ that it has in its source. In the example, it is a length 5 birdcage. A dot is a length 1 birdcage. 

\subsection{One color light leaves}

A \emph{One Color Light Leaf} is a morphism built-up only with birdcages and dots. In the right-most position  one can admit  a hanging birdcage as in the picture.

\begin{figure}[H] \begin{center}
 \includegraphics[scale=0.28]{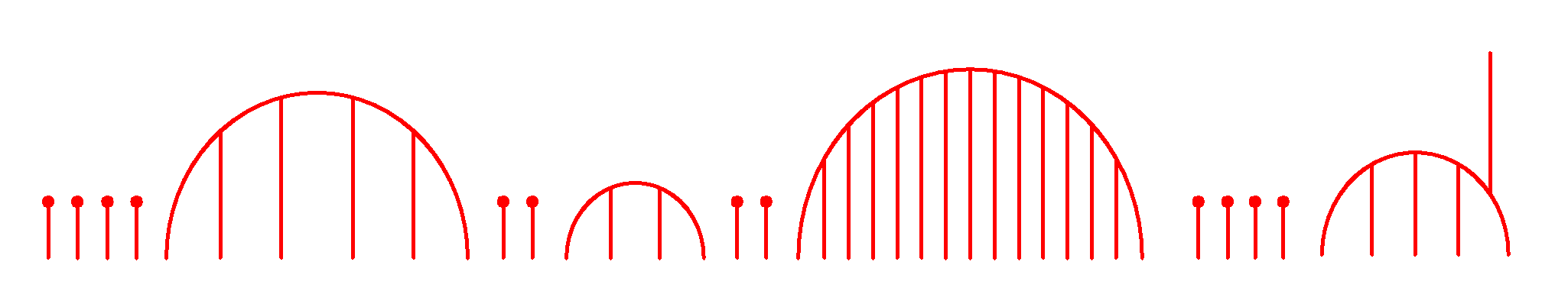} 
\end{center}
\caption{Example of a one-color light leaf}  
\label{LL1}
\end{figure}

\begin{figure}[H] \begin{center}
 \includegraphics[scale=0.25]{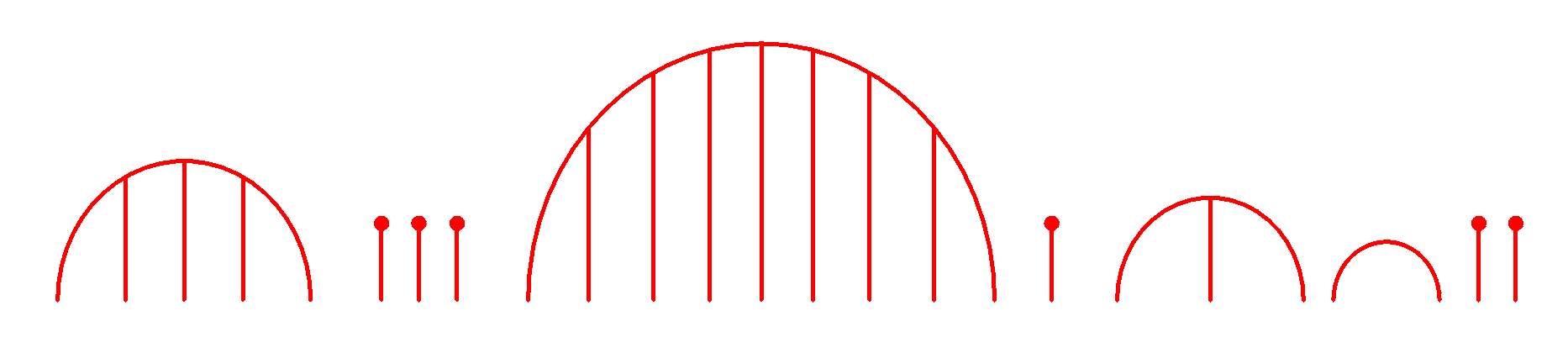} 
\end{center}
\caption{Another example}  
\label{LL2}
\end{figure} 

By definition a light leaf has two possible targets (as shown in figures \ref{LL1} and  \ref{LL2}). It can be $B_s$ if it ends with a hanging birdcage or $R$ if not. We will call $LL_n(s)$ the light leaves with source  $\underbrace{B_sB_sB_s\cdots}_{n \ \mathrm{times}}:=B_{s}^n$ and with target $B_s$, and $LL_n(e)$ the light leaves with the same source and target $R$. 

For any morphism $f$ represented by a  picture, one can flip the picture upside-down and thus obtain what we call the adjoint $f^a$, and this is again a well defined morphism with source and target flipped. For example, the adjoint of the morphism in Figure \ref{LL2} is the following 
\begin{figure}[H] \begin{center}
 \includegraphics[scale=0.25]{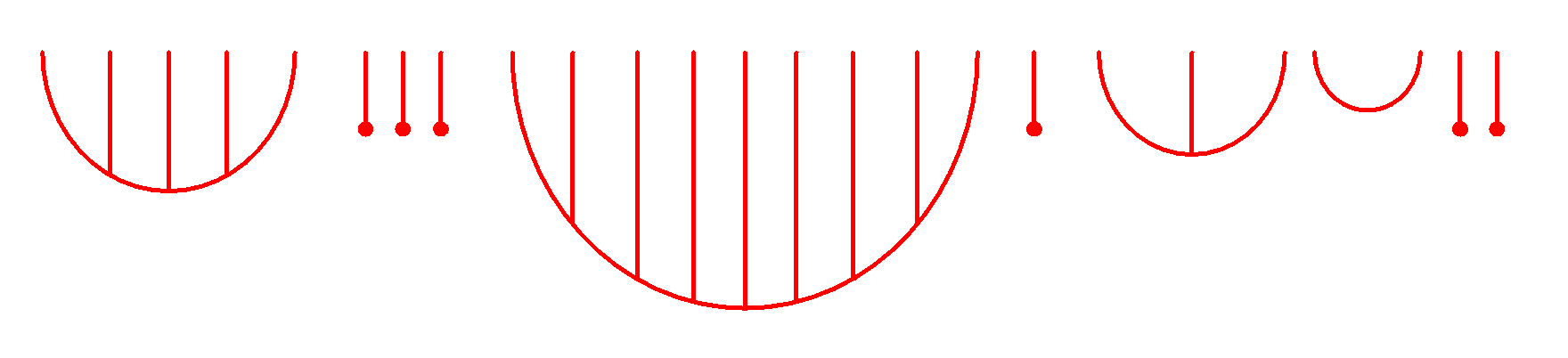} 
\end{center}
\end{figure}

Now we can state the ``one color'' Double Leaves Theorem. This is a particular case of a Theorem proved for any Coxeter system in \cite{li3}.
\begin{thm}\label{DL}
The set $$LL_m(s)^a\circ LL_n(s)\bigcup LL_m(e)^a\circ LL_n(e)$$ is a basis of $\mathrm{Hom}(B_s^n,B_s^m)$ as a right $R$-module. The elements of this basis are called \emph{double leaves.}
\end{thm}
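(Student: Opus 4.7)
The plan is to prove Theorem \ref{DL} in two stages: first, match the graded count of double leaves against the graded rank of $\mathrm{Hom}(B_s^n,B_s^m)$ as a free right $R$-module; second, establish linear independence of the double leaves by exhibiting an explicit direct sum decomposition of $B_s^n$ realised by the one-color light leaves.

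For the numerical match, I would start by iterating the isomorphism $B_s\otimes_R B_s\cong B_s(1)\oplus B_s(-1)$ of equation (\ref{BsBs}) to obtain $B_s^n\cong (v+v^{-1})^{n-1}\cdot B_s$ for $n\geq 1$ (Notation \ref{poly}). Equation (\ref{Rgr}) then gives that $\mathrm{End}_{R\text{-bim}}(B_s)$ is a free right $R$-module of graded rank $1+v^2$ with basis $\{\mathrm{id}_{B_s},\, m_s^a\circ m_s\}$ (the identity in degree $0$ and the ``barbell'' in degree $2$); hence $\mathrm{Hom}(B_s^n,B_s^m)$ is free of graded rank $(1+v^2)(v+v^{-1})^{n+m-2}$. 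On the other side, a short induction on $n$, splitting a light leaf by whether its first strand is alone (producing a dot) or joined to the second strand (producing a $j_s$), gives $\sum_{L\in LL_n(s)}v^{\deg L}=(v+v^{-1})^{n-1}$; the bijection $L\mapsto m_s\circ L$ between $LL_n(s)$ and $LL_n(e)$ shifts degrees by $+1$, so $\sum_{L\in LL_n(e)}v^{\deg L}=v(v+v^{-1})^{n-1}$. The two families of double leaves then contribute $(v+v^{-1})^{n+m-2}$ and $v^2(v+v^{-1})^{n+m-2}$, totalling exactly the graded rank of $\mathrm{Hom}$.

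The second stage is the harder one. I would prove the stronger geometric statement that the light leaves furnish an explicit direct sum decomposition
\[
B_s^n\;\cong\;\bigoplus_{L\in LL_n(s)}B_s(\deg L),
\]
in which each $L\in LL_n(s)$ is the projection onto its summand and a suitably normalised $L^a$ is the corresponding section. Once this is done, $\mathrm{Hom}(B_s^n,B_s^m)$ decomposes as a direct sum, indexed by pairs $(L,L')\in LL_n(s)\times LL_m(s)$, of shifted copies of $\mathrm{End}(B_s)$; the two double leaves attached to such a pair, $L'^a\circ L$ and $L'^a\circ(m_s^a\circ m_s)\circ L$ (the latter being an element of $LL_m(e)^a\circ LL_n(e)$ under the bijection above), fill in the two basis elements of $\mathrm{End}(B_s)$ and yield the desired basis of $\mathrm{Hom}$.

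The main obstacle is constructing the above decomposition inductively. The inductive step applies the two orthogonal idempotents splitting $B_s\otimes B_s\cong B_s(1)\oplus B_s(-1)$ to the last two tensor factors of an already-constructed decomposition of $B_s^{n-1}$, but tracking normalisations cleanly requires a short list of planar identities: associativity of trivalent vertices, the unit relation $j_s\circ(\mathrm{id}\otimes m_s^a)=\mathrm{id}_{B_s}$ (which follows at once from $\del_s(\alpha_s)=1$), and the ``needle'' $m_s\circ m_s^a=2\alpha_s\cdot\mathrm{id}_R$. Each of these is a direct computation from the definitions via equation (\ref{p}); once they are in hand, orthogonality and completeness of the resulting idempotents fall out by induction, and the theorem follows.
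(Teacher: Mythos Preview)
The paper does not actually prove this theorem; it is stated as a particular case of the general Double Leaves Theorem and the proof is deferred to \cite{li3}. So there is no in-paper argument to compare against, and your direct one-color approach is genuinely different from (and much more elementary than) the general machinery of Section~\ref{Light leaves basis}.

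Your first stage is entirely correct: iterating $B_sB_s\cong B_s(1)\oplus B_s(-1)$ gives $B_s^n\cong (v+v^{-1})^{n-1}\cdot B_s$, the identification of $\mathrm{End}(B_s)$ as free of graded rank $1+v^2$ on $\{\mathrm{id},\,m_s^a m_s\}$ is right, and the light-leaf generating functions $(v+v^{-1})^{n-1}$ and $v(v+v^{-1})^{n-1}$ check out.

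There is one genuine imprecision in the second stage. Your inductive argument does show that the light leaves $L\in LL_n(s)$ are the \emph{projections} of a direct-sum decomposition $B_s^n\cong\bigoplus_L B_s(\deg L)$, but the corresponding sections are \emph{not} scalar multiples of the $L^a$. Already for $n=2$: the section partnering the projection $j_s$ is $\mathrm{id}\otimes m_s^a$ modified by a lower term, while $j_s^a$ is the section for the \emph{other} projection $m_s\otimes\mathrm{id}$ (indeed $j_s\circ j_s^a=0$). What rescues the argument is the flip anti-involution on diagrams: since the $L$'s are projections of a decomposition of $B_s^n$, dually the $L'^a$'s are the \emph{inclusions} of a (generally different) decomposition of $B_s^m$. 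Decomposing the source via the projections $L$ and the target via the inclusions $L'^a$ then yields
\[
\mathrm{Hom}(B_s^n,B_s^m)\;\cong\;\bigoplus_{L,L'}\mathrm{End}(B_s),\qquad f\longmapsto \bigl(\pi_{L'}\circ f\circ \sigma_L\bigr)_{L,L'},
\]
with explicit basis $\{L'^a\circ L,\ L'^a\circ m_s^a m_s\circ L\}$, which is exactly the set of double leaves. So your conclusion stands; just replace ``suitably normalised $L^a$'' by this duality step.
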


\subsection{Two colors light leaves}

In this section we will consider only two simple reflections, say $s$ and $r$, represented by two colors, say red and blue. We will explain a version of the Double Leaves Theorem in two colors  but only for reduced expressions (we will see that  the general version of this theorem (Theorem \ref{DL})  does not restrict to reduced expressions neither in the source nor in the target). It is for simplicity of exposition that we use   reduced expressions. 

We will call a \emph{full birdcage} a morphism of the following type 
\begin{figure}[H] \begin{center}
 \includegraphics[scale=0.25]{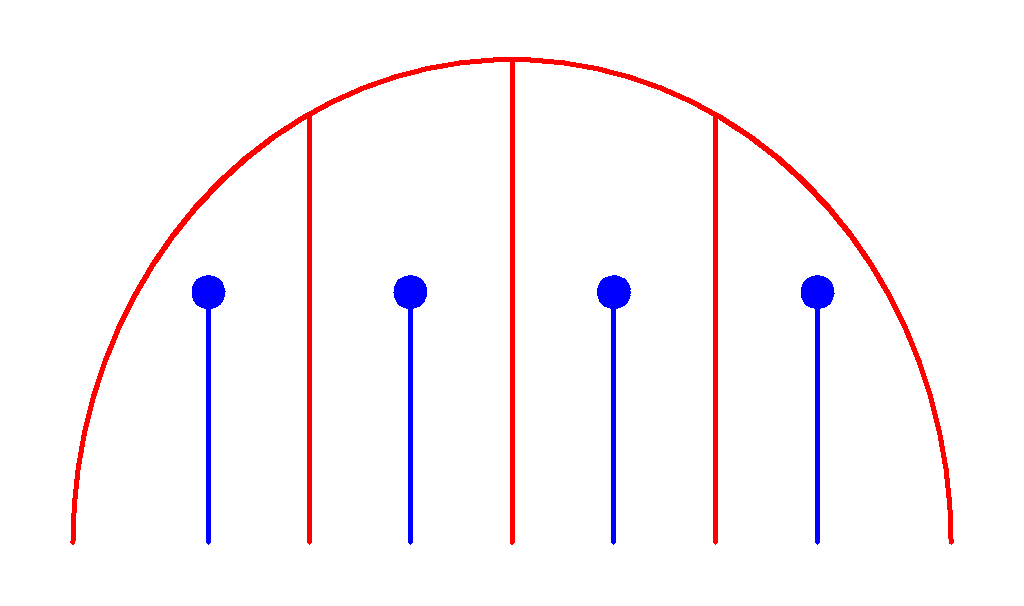} 
\end{center}
\caption{Full birdcage}  
\label{fbc}
\end{figure} 
from a bimodule of the form $B_sB_rB_s\cdots$ to $R$. We call it ``full'' by using the metaphor that it is full with ``birds'' (the dots), even though it is a bit sad to think this in such terms, poor birds. The \emph{color} of a full birdcage is the color of the birdcage, not the color of the birds. In Figure \ref{fbc} we have a red full birdcage.  A red dot will also be considered a red full birdcage (just think about the empty cage).

Let us play the following game. Start with a red dot. In each step of the game, each dot in our figure can be transformed into another full birdcage of the same color. For example, we transform the red dot into Figure  \ref{fbc}. Then we transform Figure \ref{fbc} into  
\begin{figure}[H] \begin{center}
 \includegraphics[scale=0.25]{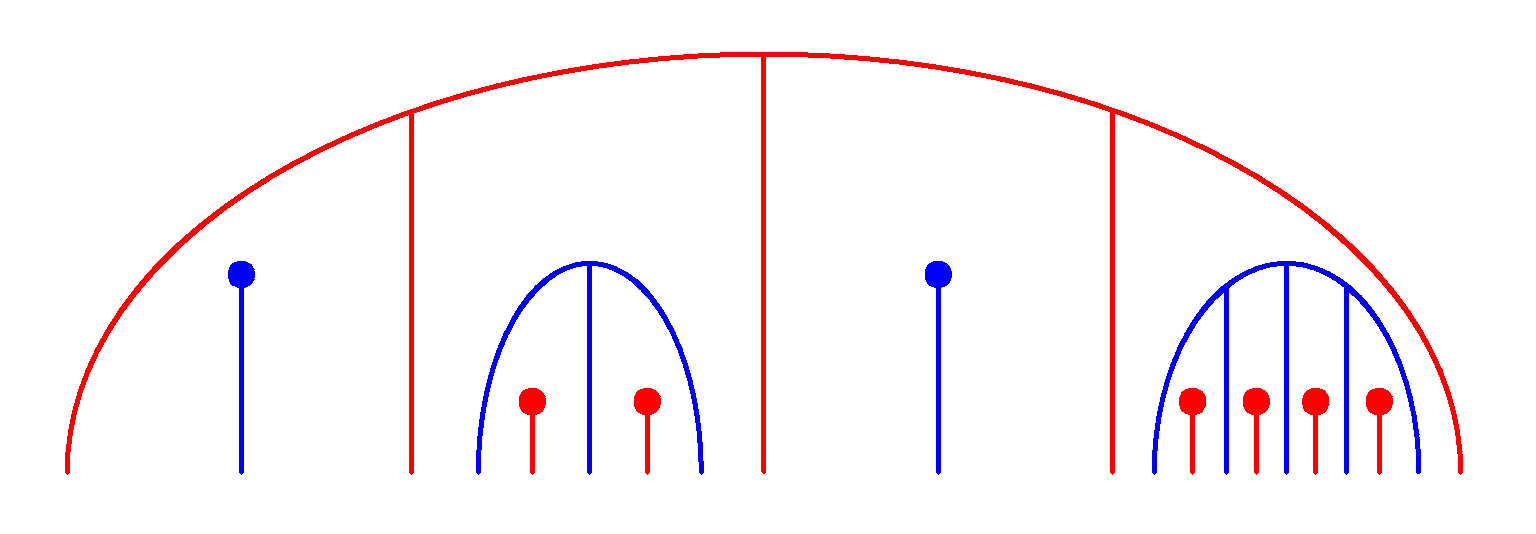} 
\end{center}
\caption{Third step in our game}  
\label{fbc2}
\end{figure} 
where we transformed the second and fourth blue dots into different full birdcages. Now we repeat this process
\begin{figure}[H] \begin{center}
 \includegraphics[scale=0.25]{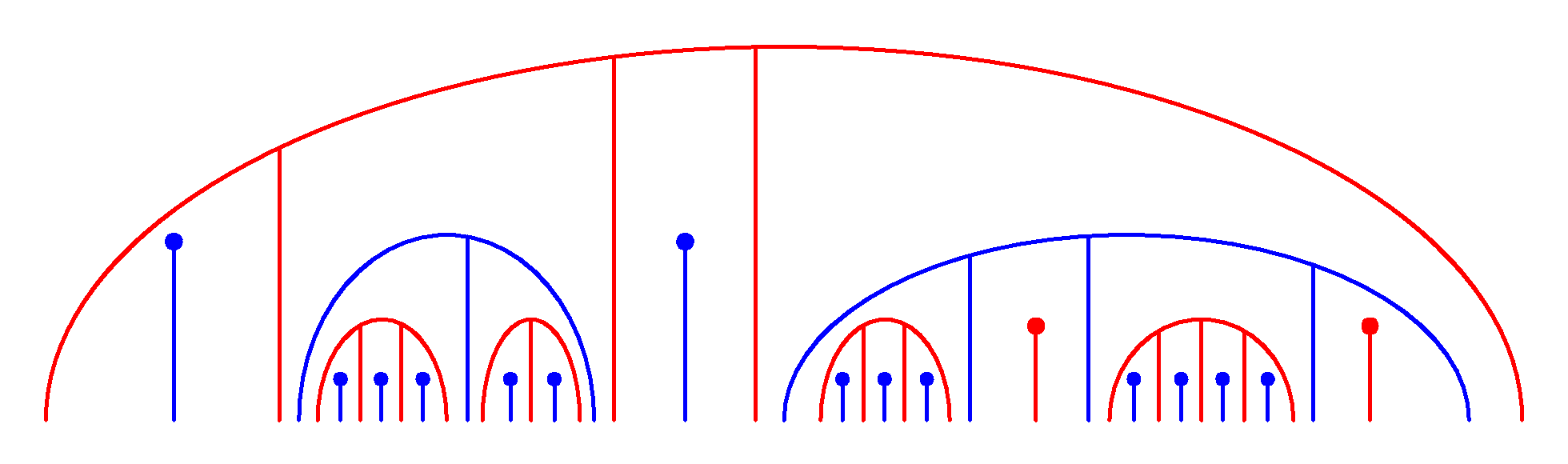} 
\end{center}
\caption{Fourth step in our game}  
\label{fbc3}
\end{figure} 
were we transformed the first, second, third and fifth red dots into different full birdcages. One can continue this process any number of times and every map one can obtain in this way will be called a \emph{birdcagecage}\footnote{Strictly speaking, the map in Figure \ref{fbc3} should be called birdcagecagecage, but it seems to us a bit impractical this name, so we stick with  birdcagecage.} (so in particular, any birdcage and any dot is a birdcagecage). If one adds a string as before, a birdcagecage is called a \emph{hanging birdcagecage} (again the degenerate case of a straight vertical line, the identity of some $B_s$, will also be considered a hanging birdcagecage).  

Let $W$ be a Dihedral group or a Universal Coxeter group in two generators (examples C and D).  If $\underline{x}= srs\cdots$ then we define the Bott-Samelson bimodule $B_{\underline{x}}:=B_sB_rB_s\cdots$. If we exclude the longest element for the Dihedral case then every element has only one reduced expression, so we can just call this bimodule $B_x$.

A  \emph{two-color light leaf} with source $B_x$ is a morphism built-up in three zones. The left zone (say zona A) is composed by birdcagecages. The middle zone (say zone B) is composed by hanging birdcagecages. The right zone (say zone C) is either empty or composed by just one birdcagecage. 
One example of a two color light leaf is
\begin{figure}[H] \begin{center}
 \includegraphics[scale=0.3]{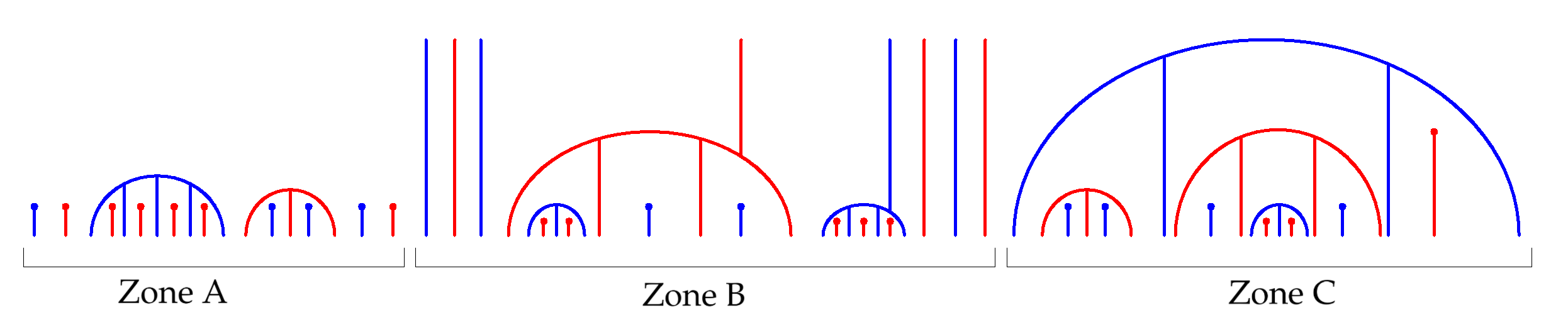} 
\end{center}
\caption{Example of a two-color light leaf}  
\label{tcll}
\end{figure} 

We will call $LL_{x}(z)$ the two color light leaves with source $B_x$ and target $B_z$. The following is a version of the Double Leaves Theorem in two colors 

\begin{thm}\label{DL}
Let $W$ be a finite or infinite Dihedral group and $x,y\in W$. If $W$ is finite, let us pick $x$ or $y$ different from $w_0$.
The set $$\bigcup_{z} LL_y(z)^a\circ LL_x(z)$$ is a basis of $\mathrm{Hom}(B_x,B_y)$ as a right $R$-module. 
\end{thm}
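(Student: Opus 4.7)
The plan is to proceed by induction on $l(x)+l(y)$, using Frobenius-style adjunction to reduce to a smaller Hom space at each step. The base case $x=y=e$ is immediate since $\mathrm{Hom}(R,R)=R$ and the only double leaf is the identity. For the inductive step, after possibly swapping $x$ and $y$ via the adjoint $f\mapsto f^a$, I would assume $l(x)\geq 1$ and write $B_x=B_tB_{x'}$, where $t$ is the first simple reflection of the (unique, by the hypothesis on $w_0$) reduced expression for $x$.

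The first key tool is the self-adjointness of $B_t$, exhibited concretely by the dots $m_t,m_t^a$ and the trivalent vertices $j_t,j_t^a$ of Section \ref{dec}, which produces a natural isomorphism of right $R$-modules $\mathrm{Hom}(B_tB_{x'},B_y)\cong \mathrm{Hom}(B_{x'},B_tB_y)$ up to a grading shift. The second key tool is the explicit decomposition of $B_tB_y$ into indecomposables in the dihedral case, read off from the Hecke-algebra calculations of Example C and Dyer's formula (Theorem \ref{Dy}) via the character map. There are exactly two possibilities: either $t$ is the leftmost letter of the reduced expression for $y$, in which case $B_tB_y\cong B_y(1)\oplus B_y(-1)$, or it is not (and the longer expression stays below $w_0$), in which case $B_tB_y\cong B_{ty}$. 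These two alternatives correspond precisely to the two elementary moves of the light-leaves algorithm, namely a U-turn built from a cup and a trivalent vertex, or a vertical continuation with the identity on $B_t$. Under the adjunction, concatenating the first-letter gadget $m_t$, $m_t^a$, $j_t$, or $j_t^a$ to a light leaf on the smaller problem yields precisely a light leaf for the original, so the proposed basis on the left corresponds bijectively to the union of the proposed bases on the right.

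Granting this dictionary, spanning is immediate from the inductive hypothesis: every morphism decomposes, through the adjunction and the direct sum split, as a right-$R$-linear combination of double leaves at smaller total length, which are then transported back. Linear independence is handled by a triangularity argument: the indecomposables $B_z$ appearing in $B_x$ and $B_y$ are pairwise non-isomorphic, so $\mathrm{Hom}(B_z,B_{z'})=0$ for $z\neq z'$ by Krull--Schmidt (\cite[Section 5.4]{Pi}), and within a fixed $z$ the composites $LL_y(z)^a\circ LL_x(z)$ factor through that summand; a minimal-degree generator argument, analogous to the proof of indecomposability in the $S_3$ case, lets one isolate the leading term of each double leaf separately. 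Combined with the bijection of the previous paragraph, these leading terms are linearly independent over $R$.

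The main obstacle will be the careful matching in the second paragraph: one must verify that the two-branch structure of the light-leaves algorithm at each step corresponds exactly to the two possible forms of $B_tB_y$, and that the resulting combinatorics faithfully tracks the decomposition of $B_x$ and $B_y$ into indecomposables along the whole induction. This is precisely why the theorem is restricted to dihedral groups with $x,y\neq w_0$: uniqueness of reduced expressions and the absence of the ``longest element'' collapse (where the two cases above would merge and extra summands would appear) ensure the algorithm has no hidden third branch. For general Coxeter systems a more elaborate version of the same argument, using subexpressions and Deodhar-type combinatorics, is required; this is the content of \cite{li3}.
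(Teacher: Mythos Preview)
The paper does not actually prove this theorem; it is presented as a special case of the general Double Leaves Theorem established in \cite{li3} (stated again as Theorem~\ref{LL} later in the paper). So there is no proof in the paper to compare against, only the citation to the general machinery.

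Your inductive scheme via biadjunction is a reasonable starting point and is close in spirit to how spanning is proved in \cite{Li1} and \cite{li3}, but there are two genuine gaps.

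First, the induction on $l(x)+l(y)$ does not decrease in your Case~2. When $t$ is \emph{not} the first letter of $y$, you correctly observe $B_tB_y=B_{ty}$ as Bott--Samelson bimodules, so $\mathrm{Hom}(B_{x'},B_tB_y)=\mathrm{Hom}(B_{x'},B_{ty})$; but $l(x')+l(ty)=l(x)+l(y)$, and you have not moved closer to the base case. You would need to induct on $l(x)$ alone, allowing the target to grow; but then along the way $ty$ may reach $w_0$ (or, in the finite case, a non-reduced expression), and the statement you are proving no longer applies to the intermediate Hom space. This is precisely where the $w_0$ exclusion becomes delicate rather than cosmetic, and your sketch does not address it.

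Second, and more seriously, your linear-independence argument relies on the assertion that ``$\mathrm{Hom}(B_z,B_{z'})=0$ for $z\neq z'$ by Krull--Schmidt''. This is false: Hom spaces between non-isomorphic indecomposable Soergel bimodules are almost never zero---already $m_s\colon B_s\to R$ is a nonzero morphism between non-isomorphic indecomposables. Krull--Schmidt guarantees uniqueness of decomposition, not orthogonality of summands. The independence of double leaves in \cite{Li1}, \cite{li3} is obtained by an entirely different route: one proves spanning, and then invokes Soergel's Hom formula (which computes the graded rank of $\mathrm{Hom}(B_{\underline{s}},B_{\underline{r}})$ via standard filtrations) to see that the number of double leaves, counted with degree, matches that rank. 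No Hom-vanishing between indecomposables is used. Your ``leading term'' argument, as written, has no mechanism to separate a double leaf factoring through $B_z$ from one factoring through $B_{z'}$ when these summands do talk to each other.

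Finally, note that your bijection in the second paragraph is phrased in terms of the tree algorithm of Section~\ref{T}, whereas the two-colour light leaves in this theorem are defined via birdcagecages; that these coincide is itself something to check.
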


\section{Light leaves in general}\label{Light leaves basis}

\subsection{Adjunction}\label{adjunction}
Until now we have spoken about ``adjoint'' morphisms without really saying what the adjunction is. 

\begin{lem}
If $M$ and $N$ are two Bott-Samelson bimodules, the following map is an isomorphism of graded right $R$-modules 
\begin{align*}
\mathrm{Hom}(B_sM,N)&\rightarrow \mathrm{Hom}(M,B_sN)\\
f&\mapsto (\mathrm{id}_{B_s}\otimes f)\circ ((j_s^a\circ m_s^a)\otimes \mathrm{id}_M)
\end{align*}
\end{lem}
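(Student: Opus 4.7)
The plan is to exhibit an explicit two-sided inverse and verify the triangle identities of an adjunction. The natural candidate, dual to the ``unit'' $j_s^a \circ m_s^a : R \to B_s B_s$, is the ``counit'' $m_s \circ j_s : B_s B_s \to R$ (both of degree $0$), so I would set
\[
\mathrm{Hom}(M, B_s N) \to \mathrm{Hom}(B_s M, N), \qquad g \mapsto \bigl((m_s \circ j_s) \otimes \mathrm{id}_N\bigr) \circ (\mathrm{id}_{B_s} \otimes g),
\]
and claim that this is the inverse of the map in the statement. Since both assignments are built by pre- and post-composing with fixed graded $R$-bimodule maps, they are automatically functorial in $M$ and $N$, right $R$-linear, and degree-preserving; so the only real content is that the two composites are the identity.

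By the standard formalism of adjunctions, both of these reduce to the single zigzag identity
\[
\bigl((m_s \circ j_s) \otimes \mathrm{id}_{B_s}\bigr) \circ \bigl(\mathrm{id}_{B_s} \otimes (j_s^a \circ m_s^a)\bigr) = \mathrm{id}_{B_s}
\]
together with its left--right mirror, viewed as $R$-bimodule endomorphisms of $B_s$. Because $B_s$ is cyclic over $R \otimes_{\RR} R$, generated by $1 \otimes_{R^s} 1$, it suffices to evaluate both sides on this one element. Expanding the defining formulas of $m_s, m_s^a, j_s, j_s^a$, the image of $1 \otimes_{R^s} 1$ under the upper composite is a sum of two terms whose coefficients are $\partial_s(\alpha_s)$ and $\partial_s(1)$; using $\partial_s(\alpha_s)=1$ and $\partial_s(1)=0$, this collapses to $1 \otimes_{R^s} 1$, as required.

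The only real obstacle is notational: one must carefully carry through the identifications $R \otimes_R {-} = {-}$ and slide $R^s$-invariant elements across tensor symbols $\otimes_{R^s}$ without getting lost in the four nested factors appearing in $B_s B_s B_s$. Conceptually, the lemma reflects the fact that $R$ is a free Frobenius extension of $R^s$ (with Frobenius trace a scalar multiple of $\partial_s$); this makes induction and restriction between $R$-modules and $R^s$-modules biadjoint up to a grading shift, and therefore forces $B_s$ to be self-adjoint as an object of $\mathcal{B}$. That conceptual picture is also what guarantees the same adjunction will hold for every simple reflection in the general setting of Section~\ref{any}.
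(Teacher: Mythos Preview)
Your proposal is correct and follows exactly the approach the paper indicates: the paper does not write out a proof but says ``It is an exercice to prove this (one just needs to find explicitly the inverse map)'' and refers to \cite[Lemme 2.4]{Li0} for details. Your candidate inverse $g \mapsto ((m_s \circ j_s)\otimes \mathrm{id}_N)\circ(\mathrm{id}_{B_s}\otimes g)$ is the intended one, and your verification of the zigzag identity on the generator $1\otimes_{R^s}1$ via $\partial_s(\alpha_s)=1$, $\partial_s(1)=0$ is precisely the computation that makes this work; the Frobenius-extension remark at the end is also the right conceptual framing.
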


It is an exercice to prove this (one just needs to find explicitly the inverse map). 
 In any case, a detailed proof can be found in \cite[Lemme 2.4]{Li0}. This adjunction might seem complicated but in pictures, it is just this map
\begin{figure}[H] \begin{center}
 \includegraphics[scale=0.25]{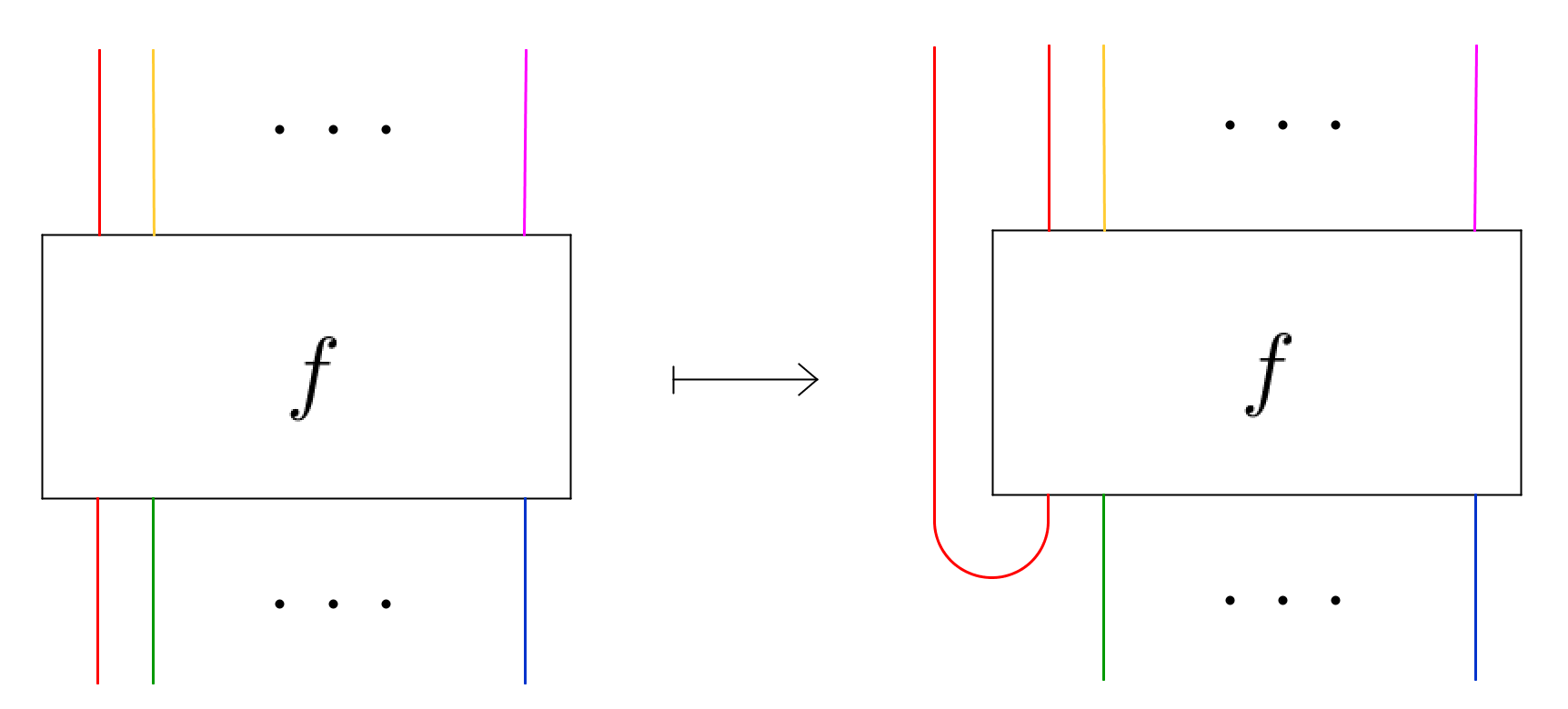} 
\caption{Adjunction map}  
\end{center}
\end{figure} 
It is an easy exercice to prove that $j_s^a$ is the adjoint of $j_s$ in this sense as it is $m_s^a$ of $m_s$. 
Before we can consrtuct the light leaves we need 

\subsection{A new morphism} Until now we have  just used the morphisms $m_s$, $j_s$ and its adjoints. We need a new morphism that does not appear in the examples that we have seen so far (it would appear in Theorem \ref{DL} if we would have considered $x$ or $y$ equal to the longest element, or if we would have considered as source a Bott-Samelson not represented by a reduced expression).

\subsubsection{The morphism $f_{sr}$}

Consider the bimodule $X_{sr}=B_sB_rB_s\cdots$ the product having $m(s,r)$ terms (recall that $m(s,r)$ is the order of $sr$). 
We define $f_{sr}$ as the only  degree $0$ morphism from $X_{sr}$ to $  X_{rs}$ sending $1^{\otimes}$ to $1^{\otimes}$. We have essentially encountered this morphism in Section \ref{babyA}.

Let $x=srs\cdots$ with the product having $m(s,r)$ terms. 
We have seen in Section \ref{any} that one has an isomorphism $$ X_{sr}\cong B_x\oplus \bigoplus_{y<x}p_y\cdot  B_y\hspace{.3cm}\mathrm{with }\ p_y\in \NN[v, v^{-1}]. $$
We have a similar formula for $X_{rs}$. The morphism $f_{sr}$ can be defined (modulo scalar) as the  projection from $X_{sr}$ onto $B_x$ composed with the inclusion of $B_x$ into $X_{rs}$.

 We have seen in the example $W=S_3$ that   $ B_{x}\cong R\otimes_{R^{s,r}}R.$ Moreover,  one can prove that this isomorphism is valid for any dihedral group, because the morphism
\begin{align*}
 R\otimes_{R^{s,r}}R&\rightarrow X_{sr}\\
1\otimes 1&\mapsto 1^{\otimes}
\end{align*}
is an isomorphism if one restricts the target to the sub-bimodule $$B_x=\langle 1^{\otimes}\rangle\subseteq X_{sr}$$ generated by $1^{\otimes}$. So to have an explicit formula for $f_{sr}$ we just need to find the inverse of this isomorphism. This is done in detail in \cite[Prop. 3.9]{li2.5}. By  construction it is clear that $f_{rs}$ is the adjoint of $f_{sr}.$ In \cite[Chapter 2]{Li0} one can find several interpretations of $f_{sr}$.

\subsubsection{Examples} Suppose we are in the case $W=S_n$. In this case $R=\RR[x_1,x_2, \ldots, x_n]$ and the group $W$ acts permuting the variables. We have that the simple reflections in $W$ are the $s_i$, the permutation switching the variables $i$ and $i+1$.  For simplicity of notation we will denote by $f_{ij}$ the morphism $f_{s_is_j}$ and by $B_i$ the bimodule $B_{s_i}$.
In this example we have three cases to consider. 
\begin{enumerate}
\item \textbf{First case:} $\vert i-j \vert>1$. The morphism $f_{ij}:B_iB_j\rightarrow B_jB_i$ is completeley determined by the formula $f_{ij}(1^{\otimes})=1^{\otimes},$ because the element $1^{\otimes}$ generates $B_iB_j$ as a bimodule. 
\item \textbf{Second case:} $j=i+1$. The morphism $f_{ij}:B_iB_{i+1}B_i\rightarrow B_{i+1}B_iB_{i+1}$ is completeley determined by the formulae $f_{ij}(1^{\otimes})=1^{\otimes}$ and $$f_{ij}(1\otimes x_i\otimes 1\otimes 1)=(x_i+x_{i+1})\otimes 1\otimes 1\otimes 1- 1\otimes 1\otimes 1\otimes x_{i+2}$$
\item \textbf{Third case:} $j=i-1$.  The morphism $f_{ij}:  B_{i+1}B_iB_{i+1}\rightarrow  B_iB_{i+1}B_i  $ is completeley determined by the formulae $f_{ij}(1^{\otimes})=1^{\otimes}$ and $$f_{ij}(1\otimes x_{i+2}\otimes 1\otimes 1)=1\otimes 1\otimes 1\otimes (x_{i+1}+x_{i+2})- x_i\otimes 1\otimes 1\otimes 1.$$
\end{enumerate}

\subsection{Path morphisms}\label{f}

\begin{defi} For $(W,S)$ a Coxeter system and $x\in W$ we define the \emph{Reduced expressions graph of $x$} or simply \emph{Rex($x$)} as the graph with nodes the set of reduced expressions of $x$ and  two reduced expressions are joined by an edge if they differ by a single braid relation. 
\end{defi}

For every reduced expression $\underline{s}$ we have associated a Bott-Samelson bimodule $ B_{\underline{s}}.$ If two Bott-Samelson bimodules $B, B'$ differ  by just one braid relation one has a morphism of the type $\mathrm{id}\otimes f_{sr}\otimes \mathrm{id}\in \mathrm{Hom}(B,B')$. For example, for the braid move $pq\,\, srs\,\, u\rightarrow pq\,\, rsr\,\, u$ (here we suppose $srs=rsr$) we have an associated morphism between the corresponding Bott-Samelson bimodules  $$\mathrm{id^2}\otimes f_{sr}\otimes \mathrm{id} : B_pB_q (B_sB_rB_s) B_u\rightarrow B_pB_q(B_rB_sB_r) B_u.$$This means that for each  path $p$ in Rex($x$)  one can uniquely associate a morphism $f(p)$ between the corresponding Bott-Samelson bimodules. We call it  a \emph{path morphism.}

\begin{defi} A \emph{Complete path} in a graph is a path passing through every vertex of the graph at least once. 
\end{defi}

The following conjecture is due to the author.

\begin{ques}[Forking path conjecture] Let $x\in S_n.$  Let $p$ and $q$ be two complete paths in $\mathrm{Rex}(x)$. Then $f(p)=f(q).$
\end{ques}
Some remarks about this conjecture

\begin{remark} 
There has been some serious computer checking of this conjecture  by the author and Antonio Behn, using Geordie Williamson's programs.  That this conjecture was checked in huge cases (Rex graphs of over 100.000 vertices) was extremely surprising for the author. There is no  conceptual understanding of why this conjecture could be true. It seems utterly strange. 
\end{remark}

\begin{remark}
 If this conjecture was proved, then one would have a new and natural basis for the Hecke algebra, by applying the character map to the image of this (unique, for any element of $S_n$) projector. The author believes this would give a  rich combinatorial basis of the Hecke algebra, and a very natural set of Soergel bimodules that in our dreams should play a role in the understanding of the $p$-canonical basis\footnote{We will explain in a future paper of this saga what is this basis and why it is so important.}.
\end{remark}

\begin{remark}
The conjecture is not true for any Coxeter group. There are counter-examples for $F_4$ (although not easy to find). Nonetheless, in \cite{li2.5} the author proves an analogue of the Forking path conjecture for ``extra-large Coxeter groups'', i.e. groups in which $m(s,r)>3$ for all $s,r\in S, $ so it is quite mysterious for what groups this should be true. 
\end{remark}

\begin{remark}
There is a 55 pages paper of Elias \cite{El} published in a fine journal whose central result is  that a certain path morphism in Rex($w_0$) is an idempotent, where $w_0$ is the longest element of $S_n$, and that it projects to the indecomposable Soergel bimodule.  To prove that this path morphism is an idempotent has aproximately the same level of difficulty as to prove the Forking path conjecture for the particular case of the longest element $w_0$ and for  two specific complete paths $p_0$ and $q_0$. 
This gives an idea of how hard the Forking path conjecture can be, and also the kind of methods one can use to attack it (for example, the beautiful topology appearing in the higher Bruhat order of Manin and Schechtman).
\end{remark}

If one has a reduced expression $t_1\cdots t_i$ of some element $x\in W$, then by the important property in Section \ref{imp} we know that, if $xs<s$ for some $s\in S$, then  there is at least one path in Rex($x$) starting in $t_1\cdots t_i$ and ending in some reduced expression with $s$ in the far-most right position. One  considers the path morphism associated to a path like this and  call it \emph{a path morphism from $t_1\cdots t_i$ taking $s$ to the right. }

\subsection{The tree $\mathbb{T}_{\underline{s}}$}\label{T}

Let  $(W,\mathcal{S})$ be  an arbitrary Coxeter system. In this section we fix a sequence of simple reflections 
 $\underline{s}=(s_1,\ldots, s_n) \in \mathcal{S}^n$ and construct a tree $\mathbb{T}_{\underline{s}}$.

We construct a perfect binary tree (i.e a tree in which all interior nodes have exactly two children and all leaves have the same depth) with nodes colored by Bott-Samelson bimodules and arrows colored by morphisms from parent to  child nodes. We construct it by induction on the depth of the nodes. In depth zero and  one we have the following tree:

\vspace{0.5cm}

\centerline{
\xymatrix@C=0cm{
&(B_{s_1} B_{s_2} \cdots B_{s_n})\ar[ddl]_{m_{s_1}\otimes\mathrm{id}^{n-1}} \ar[ddr]^{ \mathrm{id}} & \\
&& \\
(B_{s_2} \cdots B_{s_n})&  & (B_{s_1})(B_{s_2} \cdots B_{s_n})\\
 }
}
\vspace{0.5cm}
Let $k<n$ and $\underline{t}=(t_1,  \cdots, t_{i})\in  \mathcal{S}^{i}$ be such that a node $N$ of depth $k-1$ is colored by the bimodule $(B_{t_1}  \cdots B_{t_{i}})(B_{s_{k}} \cdots B_{s_n}),$ then we have two cases.

\begin{enumerate}
\item If we have the inequality $l(t_1\cdots t_{i}s_k)>l(t_1\cdots t_{i})$, then the child nodes (of depth $k$)  and child edges of $N$ are colored in the following way:
\vspace{0.5cm}

 \centerline{
\xymatrix@C=0cm{
&(B_{t_1}  \cdots B_{t_{i}})(B_{s_{k}} \cdots B_{s_n})\ar[ddl]_{\mathrm{id}^{i}\otimes m_{s_{k}}\otimes\mathrm{id}^{}} \ar[ddr]^{ \mathrm{id}} & \\
&& \\
(B_{t_1}  \cdots B_{t_{i}})(B_{s_{k+1}} \cdots B_{s_n})&  & (B_{t_1}  \cdots B_{t_{i}}B_{s_{k}})(B_{s_{k+1}} \cdots B_{s_n})\\
 }
}
\item If we have the opposite inequality $l(t_1\cdots t_{i}s_k)<l(t_1\cdots t_{i})$, then the  child nodes (of depth $k$) and child edges of $N$ are colored in the following way (arrows are the composition of the corresponding pointed arrows):
 
 \vspace{0.5cm}
 \centerline{
\xymatrix@C=0cm{ 
& (B_{t_1}  \cdots B_{t_{i}})(B_{s_{k}} \cdots B_{s_n})\ar@/^15mm/[ddddr] \ar@/_15mm/[ddddl]
  \ar@{-->}[d]^{F\otimes \mathrm{id}}&&   \\
&B_{r_1}  \cdots B_{r_{i-1}}B_{s_{k}}B_{s_{k}} \cdots B_{s_n}\ar@{-->}[d]^{\mathrm{id}^{i-1}\otimes j_{s_k}\otimes \mathrm{id}}\\
&    B_{r_1}  \cdots B_{r_{i-1}}B_{s_{k}} \cdots B_{s_n}\ar@{-->}[ddl]_{\mathrm{id}^{i-1}\otimes m_{s_{k}}\otimes \mathrm{id}}
 \ar@{-->}[ddr]^{\mathrm{id}}&&  \\
 &&&  \\
(B_{r_1}  \cdots B_{r_{i-1}})(B_{s_{k+1}} \cdots B_{s_n})&& (B_{r_1}  \cdots B_{r_{i-1}}B_{s_{k}})(B_{s_{k+1}} \cdots B_{s_n})&
 & && &  & &  }
}

\end{enumerate}
The map $F$ is any path morphism from ${t_1}  \cdots {t_{i}}$ taking $s_k$ to the right (see Section \ref{f}). This finishes the construction of $\mathbb{T}_{\underline{s}}.$



By composing  the corresponding arrows we can see every leaf  of the tree $\mathbb{T}_{\underline{s}}$ colored by $B_{\underline{x}}$ as a morphism in the space $\mathrm{Hom}(B_{\underline{s}},B_{\underline{x}}).$
Consider the set $\mathbb{L}_{\underline{s}}(\mathrm{id}),$ the leaves of  $\mathbb{T}_{\underline{s}}$ that are colored by the bimodule $R$.  
 In \cite{Li1}  the set $\mathbb{L}_{\underline{s}}(\mathrm{id})$ is called \textit{light leaves basis} and the following theorem is proved.

\begin{thm}[Nicolas Libedinsky ]\label{LLB}
The set $\mathbb{L}_{\underline{s}}(\mathrm{id})$ is a basis of $\mathrm{Hom}(B_{\underline{s}},R)$ as a left $R$-module.
\end{thm}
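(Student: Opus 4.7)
The strategy is induction on $n = |\underline{s}|$, with the inductive hypothesis strengthened so that for every reduced $\underline{x}$ such that $B_{\underline{x}}$ appears as a color of a node of $\mathbb{T}_{\underline{s}}$, the set $\mathbb{L}_{\underline{s}}(\underline{x})$ of leaves colored by $B_{\underline{x}}$ is a basis of $\mathrm{Hom}(B_{\underline{s}}, B_{\underline{x}})$ as a left $R$-module. The stated theorem is the case $\underline{x}$ empty. The base $n = 0$ is immediate: $\mathrm{Hom}(R, R) = R$ is generated by the unique light leaf $\mathrm{id}_R$.

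For the inductive step, write $\underline{s} = (s_1, \underline{s}')$ with $\underline{s}' = (s_2, \ldots, s_n)$, and partition $\mathbb{L}_{\underline{s}}(\mathrm{id})$ according to which child of the root each light leaf visits at depth $1$. Light leaves passing through the left child $B_{\underline{s}'}$ are of the form $\ell \circ (m_{s_1} \otimes \mathrm{id}^{n-1})$ with $\ell \in \mathbb{L}_{\underline{s}'}(\mathrm{id})$, because the sub-tree rooted at the left child is literally $\mathbb{T}_{\underline{s}'}$. Light leaves passing through the right child $B_{s_1} B_{\underline{s}'}$ correspond, via the adjunction of Section \ref{adjunction}, to light leaves of $\mathbb{T}_{\underline{s}'}$ targeting $B_{s_1}$; by the strengthened inductive hypothesis these form a basis of $\mathrm{Hom}(B_{\underline{s}'}, B_{s_1})$ as a left $R$-module.

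To conclude, one shows that the images of these two sub-bases, viewed inside $\mathrm{Hom}(B_{\underline{s}}, R)$, together form a basis. This boils down to verifying an explicit splitting of $\mathrm{Hom}(B_{\underline{s}}, R)$ compatible with the adjunction isomorphism $\mathrm{Hom}(B_{\underline{s}}, R) \cong \mathrm{Hom}(B_{\underline{s}'}, B_{s_1})$; the splitting is induced by the factorization of the identity on $B_{s_1}$ through the pair $(m_{s_1}, m_{s_1}^a)$ and its complement, and can be checked directly on the generators $1 \otimes 1$ and $1 \otimes \alpha_{s_1}$ of $B_{s_1}$ as a left $R$-module. Linear independence likewise follows from the inductive hypothesis applied to each of the two sub-families separately.

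The hard part is case (2) of the tree construction, where the path morphism $F$ intermixes distinct reduced expressions of the same element via braid moves. To push the induction through one must verify that two admissible choices of $F$ yield light-leaf sets that differ only by an $R$-linear change of basis; this is an exercise in tracking how the braid morphisms $f_{sr}$ interact with the rest of the tree, and it genuinely requires the invertibility properties from Section \ref{f} (with $f_{rs}$ as inverse to $f_{sr}$). As a practical alternative one can compute the graded rank of $\mathrm{Hom}(B_{\underline{s}}, R)$ directly from Soergel's Hom formula, applied to $\mathrm{ch}(B_{\underline{s}}) = (h_{s_1}+v)\cdots(h_{s_n}+v)$, match it against the graded cardinality of $\mathbb{L}_{\underline{s}}(\mathrm{id})$, and thereby reduce the theorem to either spanning or linear independence alone---of which spanning is the more tractable given the explicit recursive construction of the tree.
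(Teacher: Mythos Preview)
The paper does not contain a proof of this theorem; it only states the result and cites \cite{Li1}. So there is no paper-proof to compare against, and I will simply point out where your outline breaks.

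Your induction peels $s_1$ off the left, and while the left sub-tree at depth~$1$ is indeed $\mathbb{T}_{\underline{s}'}$, the right sub-tree is \emph{not}: it is the tree obtained by processing $s_2,\ldots,s_n$ starting from the prefix $(s_1)$ rather than the empty prefix, so every subsequent case-(1)/case-(2) decision---and every path morphism $F$---is different from those in $\mathbb{T}_{\underline{s}'}$. There is a combinatorial bijection between its leaves colored $R$ and the leaves of $\mathbb{T}_{\underline{s}'}$ colored $B_{s_1}$ (match the left/right choices), but the assertion that the resulting \emph{morphisms} correspond under adjunction is precisely the content you are trying to prove, not something you can invoke. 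Moreover, the splitting you propose does not exist: $m_{s_1}^a\circ m_{s_1}$ is a degree-$2$ endomorphism of $B_{s_1}$, not an idempotent, so there is no ``factorization of the identity on $B_{s_1}$ through $(m_{s_1},m_{s_1}^a)$''; the sequence $0\to R(-1)\to B_{s_1}\to R(1)\to 0$ of bimodules does not split, and the left-$R$-module decomposition $B_{s_1}\cong R(1)\oplus R(-1)$ is not a bimodule decomposition, so it does not decompose the bimodule-Hom. Finally, your strengthened hypothesis requires the statement for \emph{all} targets $B_{\underline{x}}$, but your inductive step only treats the target $R$, so the induction does not close.

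Your last paragraph is much closer to a viable strategy, and is in the spirit of what is done in \cite{Li1}: compute the graded rank of $\mathrm{Hom}(B_{\underline{s}},R)$ via Soergel's Hom formula, match it against the graded count of $\mathbb{L}_{\underline{s}}(\mathrm{id})$ (which is governed by subexpressions of $\underline{s}$ multiplying to $e$), and then prove only spanning by an honest induction---this time peeling $s_n$ from the \emph{right}, so that the depth-$(n{-}1)$ layer of $\mathbb{T}_{\underline{s}}$ is literally $\mathbb{T}_{(s_1,\ldots,s_{n-1})}$ tensored on the right with $B_{s_n}$, and the last step is a single explicit branching.
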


In the paper \cite{Li1} the leaves colored with $R$ were called light leaves\footnote{The reason for this is that if the ``weight'' of a leaf is the length of the corresponding Bott-Samelson bimodule, then the leaves colored by $R$ are the ``lightest'' ones. }. Later  this notion was changed in the literature and any leaf in the tree $\mathbb{T}_{\underline{s}}$ got to be called a light leaf. We follow this new convention.

\subsection{Construction of the double leaves basis}\label{LL}

In this section we consider two arbitrary sequences (not necessarily reduced) of simple reflections $\underline{s}=s_1\cdots s_n$ and $\underline{r}=r_1\cdots r_p.$ We are interested in calculating the space  $\mathrm{Hom}(B_{\underline{s}}, B_{\underline{r}})$. 

\vspace{0.3cm}

\textbf{Philosophy: }\emph{The natural basis between Bott-Samelson bimodules is  the tree $\mathbb{T}_{\underline{s}}$   "pasted"  with  the  tree  $\mathbb{T}_{\underline{r}}$ inverted.} 

\vspace{0.3cm}

 For any light leaf $l:B_{\underline{r}}\rightarrow B_{\underline{t}}$ in $\mathbb{T}_{\underline{r}}$ we can find its \emph{adjoint light leaf} $l^a:B_{\underline{t}} \rightarrow  B_{\underline{r}}$ by replacing each morphism in the set $\{m_s, j_s, f_{sr}\}$ by its adjoint. So we obtain a tree $\mathbb{T}_{\underline{r}}^a$ where the arrows go from children to parents.

Let  $\mathbb{L}_{\underline{s}}$ be the set of light leaves of $\mathbb{T}_{\underline{s}}$ (recall that each leaf of the tree is seen as a morphism between Bott-Samelson bimodules).

Let  $h\in \mathbb{L}_{\underline{s}}$ and $g\in \mathbb{L}_{\underline{r}}^a$, with  $h\in\mathrm{Hom}(B_{\underline{s}},B_{\underline{x}})$ and $g \in\mathrm{Hom}(B_{\underline{y}},B_{\underline{r}})$, where $\underline{x}$ and $\underline{y}$ are reduced expressions of the elements $x, y\in W$ respectively. We define
$$g\cdot h=
\begin{cases}
\hspace*{0.1cm} g\circ F\circ  h \hspace*{0.3cm} \mathrm{if}\ x=y\\
\hspace*{0.7cm} \emptyset\hspace*{0.96cm} \mathrm{if}\ x\neq y
\end{cases}$$
where $F$ is any path morphism in $\mathrm{Hom}(B_{\underline{x}}, B_{\underline{y}})$.
 We call the set $\mathbb{L}_{\underline{r}}^a\cdot \mathbb{L}_{\underline{s}}$ the \emph{double leaves basis}  of $\mathrm{Hom}(B_{\underline{s}}, B_{\underline{r}}).$  The following theorem is proved in \cite{li3}.
 \begin{thm}[Nicolas Libedinsky ]\label{LL}
The Double Leaves Basis  is a basis as a  right (or left) $R$-module of the space $\mathrm{Hom}(B_{\underline{s}}, B_{\underline{r}}).$ 
\end{thm}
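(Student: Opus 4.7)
The plan is to deduce Theorem~\ref{LL} from the one-sided light leaves theorem (Theorem~\ref{LLB}) by way of adjunction. Iterating the adjunction $\mathrm{Hom}(B_s M, N) \cong \mathrm{Hom}(M, B_s N)$ of Section~\ref{adjunction} $p$ times to transfer each $B_{r_i}$ from the target to the source produces a canonical isomorphism of graded right $R$-modules
$$\Phi\colon \mathrm{Hom}(B_{\underline{s}}, B_{\underline{r}}) \xrightarrow{\sim} \mathrm{Hom}(B_{\underline{r}^{\mathrm{op}}} B_{\underline{s}},\, R),$$
where $\underline{r}^{\mathrm{op}} = (r_p, r_{p-1}, \ldots, r_1)$. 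Applying Theorem~\ref{LLB} to the concatenated sequence $\underline{u} := \underline{r}^{\mathrm{op}}\underline{s}$ already provides a basis $\mathbb{L}_{\underline{u}}(\mathrm{id})$ of the target of $\Phi$ as a left $R$-module (the right-module statement follows by the symmetric argument, or by the duality swapping left and right on Bott--Samelson bimodules). What remains is to check that $\Phi$ carries the double leaves to this basis up to an invertible change of basis.

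To do this I would analyze the tree $\mathbb{T}_{\underline{u}}$ at depth $p$: every node there is colored by a bimodule of the shape $(B_{\underline{y}})(B_{\underline{s}})$, where $\underline{y}$ is the reduced expression tracked by the recursive construction after the first $p$ letters have been processed. For a light leaf $\ell \in \mathbb{L}_{\underline{u}}(\mathrm{id})$, the first $p$ branching choices, upon applying $\Phi^{-1}$ (i.e.\ bending the $\underline{r}^{\mathrm{op}}$-strands back up using cups $m_s^a \circ j_s^a$), become the adjoint $g^a$ of some $g \in \mathbb{L}_{\underline{r}}$ with target $B_{\underline{y}}$; the remaining $n$ branching choices form directly a light leaf $h \in \mathbb{L}_{\underline{s}}$ with target $B_{\underline{x}}$, and the requirement that $\ell$ arrive at $\mathrm{id}$ forces $x=y$ as elements of $W$. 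The path morphism $F$ that the tree's recursive construction inserts at depth $p$ (in case~(2) of Section~\ref{T}, where a simple reflection must be brought to the right) is precisely the $F$ that appears in the product $g \cdot h = g \circ F \circ h$. This yields the bijection $\ell \longleftrightarrow g \cdot h$.

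The main technical obstacle will be showing that this matching is compatible with the two independent sources of non-canonicity: the path morphisms chosen during the tree construction, and the path morphisms chosen when pasting a double leaf. The key point is that any two path morphisms between two reduced expressions of the same element $x \in W$ differ by an endomorphism of the associated Bott--Samelson which restricts to a nonzero scalar on the indecomposable summand $B_x$ and acts compatibly on the lower-order summands in the Bruhat order; hence different choices alter the candidate set of double leaves only by a unitriangular change of basis and therefore preserve the basis property. Once this is verified, $\Phi$ carries the double leaves bijectively to a transformation of $\mathbb{L}_{\underline{u}}(\mathrm{id})$ by an invertible matrix over $R$, and Theorem~\ref{LLB} finishes the proof.
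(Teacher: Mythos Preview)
The paper does not give a detailed proof of this theorem; it only references \cite{li3} and remarks that the argument ``is quite close to the proof of \cite[Th\'eor\`eme 5.1]{Li1}'', i.e.\ a direct inductive argument in the spirit of the proof of Theorem~\ref{LLB} itself. Your strategy---reduce to Theorem~\ref{LLB} via the adjunction isomorphism $\Phi$---is therefore a genuinely different route, and the adjunction step together with the cardinality count (subexpressions of $\underline{r}^{\mathrm{op}}\underline{s}$ evaluating to $e$ versus pairs of subexpressions of $\underline{r}$ and $\underline{s}$ evaluating to a common $x$) is correct and useful: it immediately tells you that $\mathrm{Hom}(B_{\underline{s}},B_{\underline{r}})$ is free of the right rank, so it would suffice to prove that double leaves span.

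However, the heart of your argument---the asserted bijection between $\mathbb{L}_{\underline{u}}(\mathrm{id})$ and double leaves under $\Phi^{-1}$---does not work as you describe it. At depth $p$ in $\mathbb{T}_{\underline{u}}$ the node is coloured by $(B_{\underline{y}})(B_{\underline{s}})$ and the remaining $n$ levels of the tree are governed by the \emph{current word} $\underline{y}$: at each step one checks whether appending $s_k$ increases or decreases the length of the running word, which starts at $\underline{y}$, not at the empty word. The morphism produced is therefore a map $B_{\underline{y}}B_{\underline{s}}\to R$ built from these $\underline{y}$--dependent choices; it is \emph{not} of the form $(\text{something})\circ(\mathrm{id}\otimes h)$ for a light leaf $h\in\mathbb{L}_{\underline{s}}$. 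Concretely, if you write $\ell=\ell_2\circ(\ell_1\otimes\mathrm{id})$ and unwind $\Phi^{-1}$, you obtain
\[
\Phi^{-1}(\ell)=(\mathrm{id}_{B_{\underline{r}}}\otimes \ell_2)\circ(\eta\otimes\mathrm{id}_{B_{\underline{s}}}),\qquad \eta:=(\mathrm{id}_{B_{\underline{r}}}\otimes\ell_1)\circ(\text{cups}):R\to B_{\underline{r}}B_{\underline{y}},
\]
which is not a factorisation of the shape $g^a\circ F\circ h$. Likewise the first $p$ levels give a light leaf for $\underline{r}^{\mathrm{op}}$, and its ``bending'' $\eta$ is a coevaluation-type map $R\to B_{\underline{r}}B_{\underline{y}}$, not an adjoint light leaf $g^a:B_{\underline{y}'}\to B_{\underline{r}}$. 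So neither half of your proposed splitting matches the pieces of a double leaf.

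Your fallback---that the two candidate families are related by a unitriangular change of basis---may well be true, but it is exactly where all the work lies, and you have not supplied an order or an argument for the off-diagonal vanishing. Proving that triangularity is essentially the same effort as the direct inductive proof the paper points to. In short: the adjunction reduction is sound and does simplify the bookkeeping (it reduces the theorem to showing that double leaves span), but your purported bijection with $\mathbb{L}_{\underline{u}}(\mathrm{id})$ is incorrect as stated, and the remaining gap is the substance of the theorem.
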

 
 The proof  of Theorem \ref{LL} is quite close to the proof of \cite[Th\'eor\`eme 5.1]{Li1}. 

\begin{remark}
Twice in this construction we said ``where $F$ is any path morphism''. Of course, for this basis to be well defined, these choices must be done once and for all. But what is quite striking about this theorem is that with any of these choices, Theorem  \ref{LL}  holds. There is a way to solve this ambiguity problem that we explore in the paper in preparation \cite{LW}. 
\end{remark}

\begin{ques} Give an algorithm to express a composition of double leaves as a linear combination of double leaves.
\end{ques}

\section{Final calculations: examples B, C and D}\label{final}

In section \ref{babyA} we were able to calculate the indecomposable Soergel bimodules for the baby example A. Now that we have introduced the graphical notation we are able to do the same thing (although we will not prove it) in the other examples, B, C and D. This method  gives  also a different way to calculate the indecomposable Soergel bimodules in baby example A. We will obtain them as the image of some idempotents of Bott-Samelson's instead of expressing them as tensor products of bimodules (which is not possible in general). 

It is enough to calculate example D (the Universal Coxeter group), since the formulas there will still be true for any Dihedral group. So in this section  we place ourselves in the case of the Universal Coxeter system $U_n$. Let us say that the box
 \begin{figure}[H] 
\begin{center}
 \includegraphics[scale=0.2]{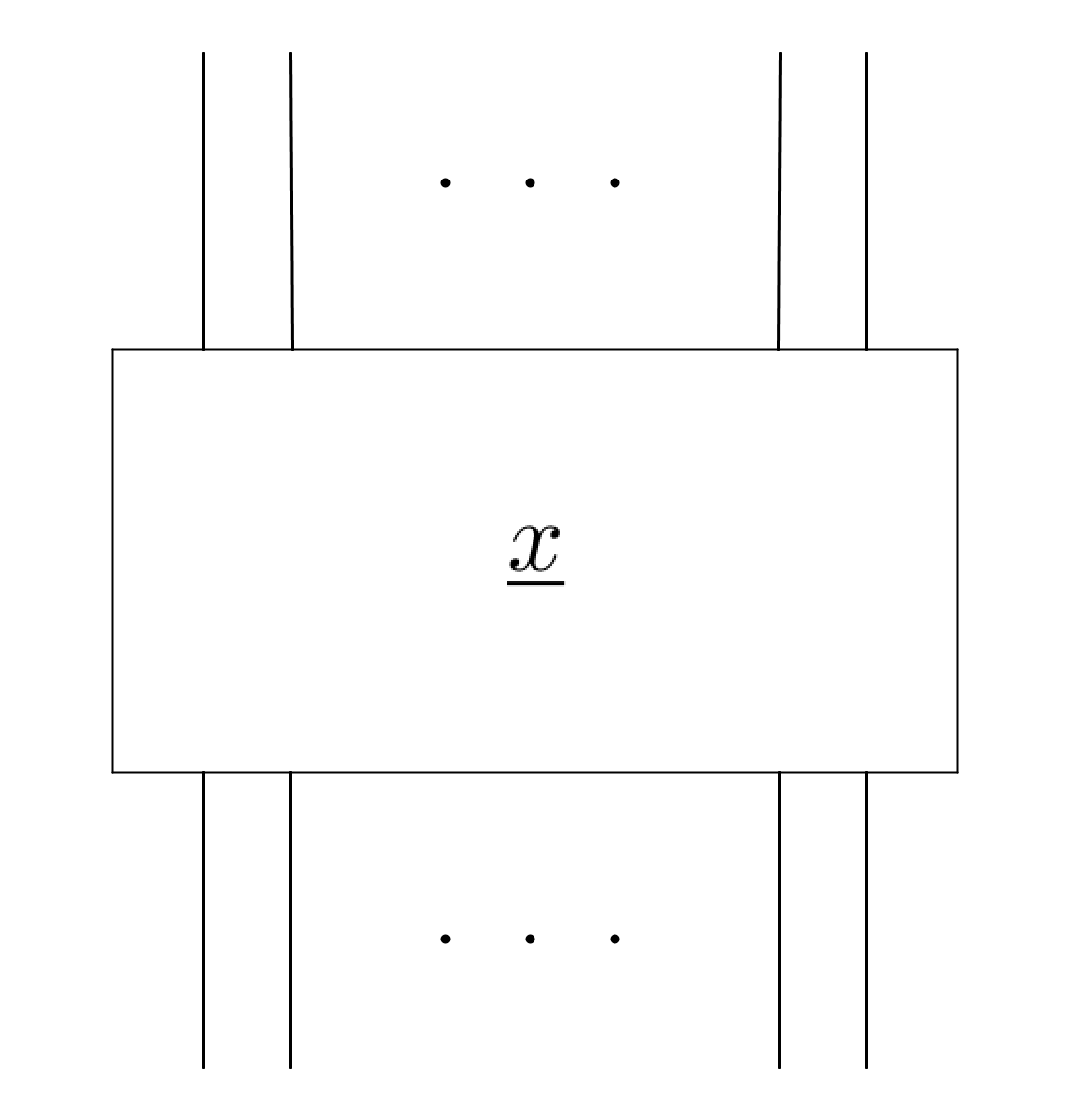}   
\end{center}
\caption{The projector}
\end{figure} 
expresses the projector (usually there are several projectors, but for this group one can prove that it is unique) into the indecomposable bimodule $B_x\subset B_{\underline{x}}$ where  $\underline{x}$ is a reduced expression in $U_n$.  For example 
\begin{figure}[H] 
\begin{center}
 \includegraphics[scale=0.75]{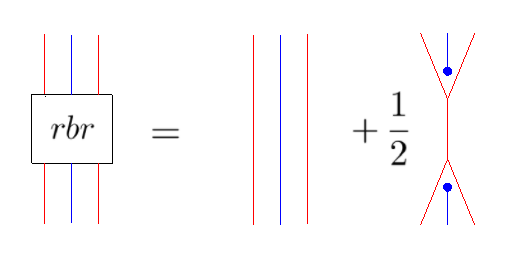} 
\end{center}
\end{figure} 
The following is a categorification of Dyer's Formula (Theorem \ref{Dy}) proved in \cite{EL}.

\begin{prop}[Ben Elias and Nicolas Libedinsky ] Let $x\in U_n$ and $\underline{x}=rs\cdots $ be a reduced expression of $x$ with $r, s, \ldots \in S.$  Let $r$ be represented by the color red, $t$ by the color tea green  and $b$ by the color blue. Black represents any color.  We have the following inductive formula. 

Extending $\underline{x}$ by a color $t\neq r, s$ one has 
 \begin{figure}[H] 
\begin{center}
 \includegraphics[scale=0.3]{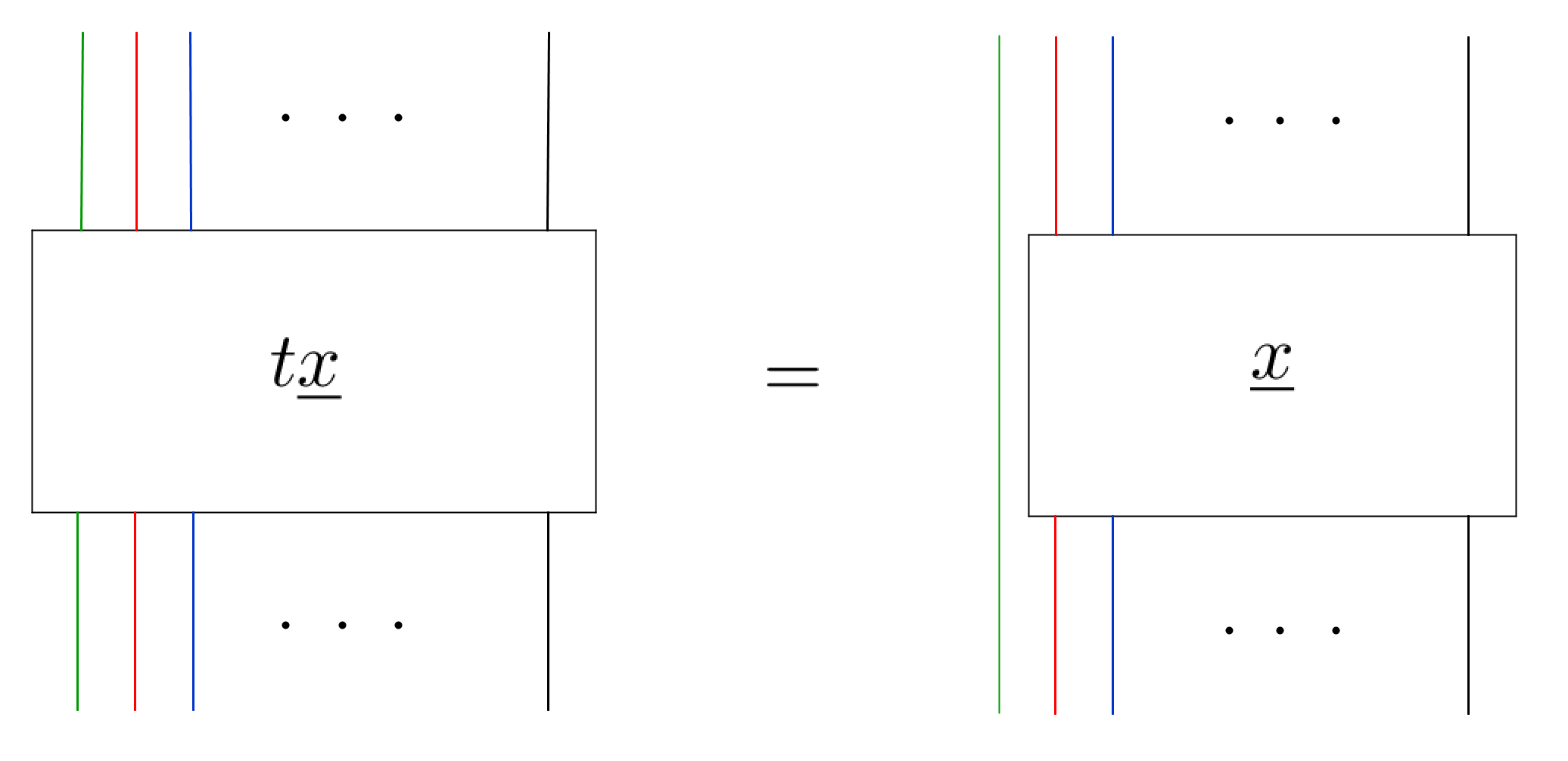}   
\end{center}
\end{figure} 
Extending $\underline{x}$ by $s$  one has 
 \begin{figure}[H] 
\begin{center}
 \includegraphics[scale=0.4]{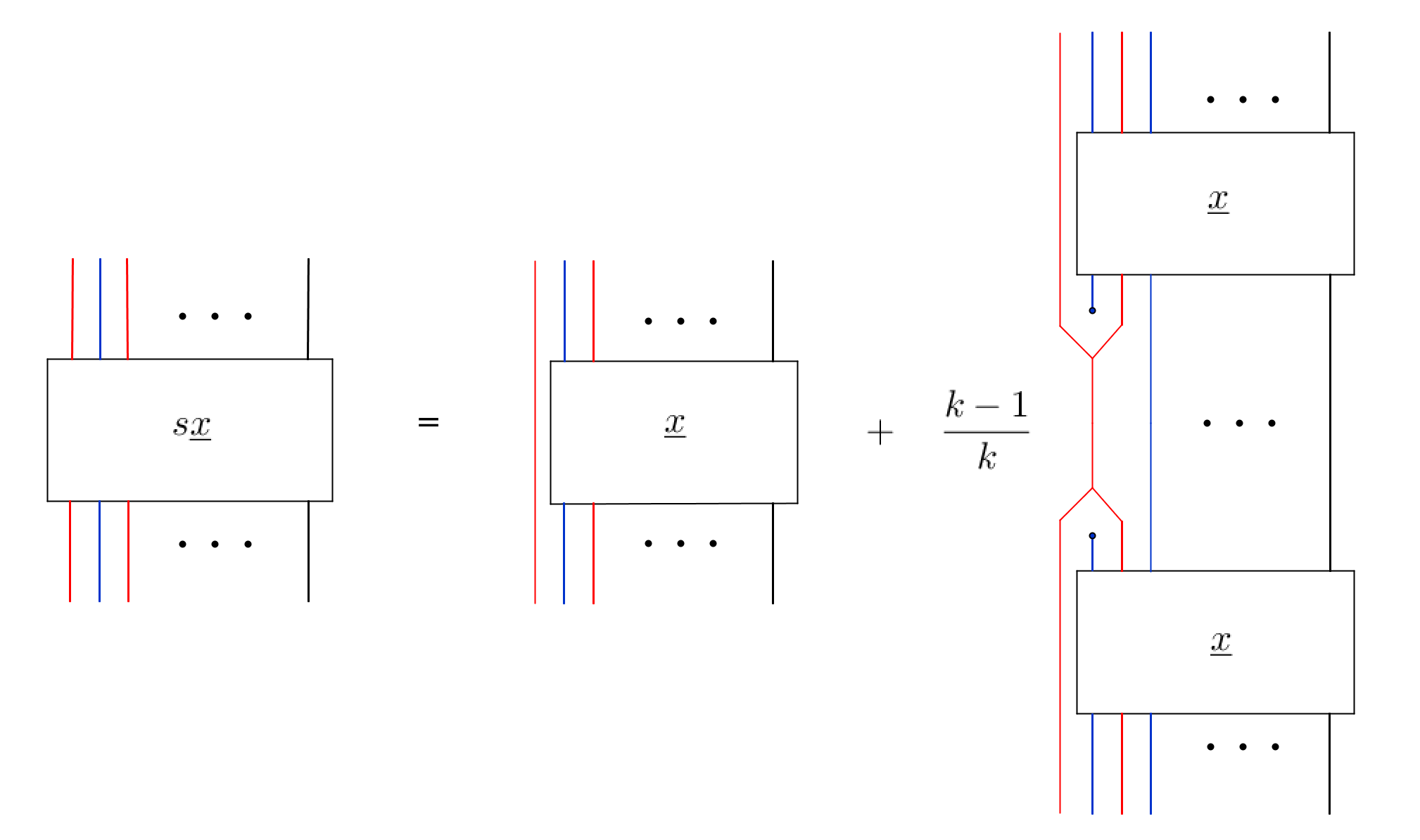}   
\end{center}
\end{figure} 
\end{prop}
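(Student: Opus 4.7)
The plan is to prove both recursive formulas simultaneously by induction on $\ell(x)$. The base case $\underline{x}$ empty is trivial: $B_{\underline{x}}=R=B_e$ is already indecomposable and its projector is the identity. For the inductive step, for each of the two recursion rules I need to verify (a) that the pictured morphism is an idempotent in the endomorphism ring of the extended Bott--Samelson, and (b) that the image of this idempotent, viewed as a graded $R$-bimodule, is isomorphic to the indecomposable Soergel bimodule attached to the extended element. In both cases (b) reduces, via Krull--Schmidt, to a computation of the class of the image in the split Grothendieck group, which by Soergel's categorification theorem must match an element of the Kazhdan--Lusztig basis predicted by Dyer's Formula (Theorem \ref{Dy}).

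For the first formula (extension by $t\neq r,s$), the picture is essentially $e_{\underline{x}}$ pasted with an uninterrupted identity strand for the new color $t$. Idempotency is immediate. The image is $B_x\otimes_R B_t$ (equivalently $B_t\otimes_R B_x$, depending on the side of extension), and by the inductive hypothesis together with the second case of Dyer's Formula its Grothendieck class equals $b_tb_x=b_{tx}=\langle B_{tx}\rangle$. Hence the image is isomorphic to $B_{tx}$ by Krull--Schmidt.

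For the second formula (extension by $s$), the picture shows $\mathrm{id}_{B_s}\otimes e_{\underline{x}}$ corrected by a term $c$ built from the adjunction morphisms $m_r, m_r^a, j_r, j_r^a$ and an inserted copy of the smaller projector $e_{r\underline{x}'}$, where $\underline{x}'$ is obtained from $\underline{x}$ by deleting its initial $r$ and is thus strictly shorter. I would proceed in three steps. First, verify that $c$ itself is an idempotent on $B_sB_{\underline{x}}$, using idempotency of $e_{r\underline{x}'}$ from the inductive hypothesis, the ``zig-zag'' relations between the red dot and trivalent morphisms, and the uniqueness of the projector in $U_n$. Second, check the compatibility
\[
c\circ(\mathrm{id}_{B_s}\otimes e_{\underline{x}})=(\mathrm{id}_{B_s}\otimes e_{\underline{x}})\circ c=c,
\]
which reflects the fact that the image of $c$ lies in the $B_{rx}$ summand of the decomposition $B_sB_x\cong B_{sx}\oplus B_{rx}$ coming from Dyer's third case. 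Third, conclude that $(\mathrm{id}_{B_s}\otimes e_{\underline{x}})-c$ is an idempotent orthogonal to $c$, with image class $b_sb_x-b_{rx}=b_{sx}$ in the Grothendieck group, hence isomorphic to $B_{sx}$ by Krull--Schmidt.

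The main obstacle is the first step of the second case: producing the correct normalization of the correction $c$ so that it is truly idempotent, not merely a nonzero endomorphism of the $B_{rx}$ summand. Although the uniqueness of the projector onto $B_{rx}$ inside $\mathrm{End}(B_sB_{\underline{x}})$ guarantees in principle that such a $c$ exists, checking explicitly that the graphical formula in the statement realizes this canonical idempotent requires a careful diagram chase in the one-color calculus on the red strand together with the inductive hypothesis on $e_{r\underline{x}'}$. All remaining verifications are formal consequences of adjunction, Krull--Schmidt, and the character map.
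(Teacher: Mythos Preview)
The paper does not prove this proposition; it simply cites \cite{EL} and adds the one-line remark that the rightmost term of the second formula, multiplied by $-1$, is an idempotent of $\mathrm{End}(B_{s\underline{x}})$ projecting onto $B_{rx}$. So there is no in-paper argument to compare against, and your outline is essentially the strategy used in the cited reference.

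One step needs sharpening. The inference ``the image has class $b_{sx}$, hence is isomorphic to $B_{sx}$ by Krull--Schmidt'' is not quite right: Krull--Schmidt gives uniqueness of the decomposition into indecomposables, but does not by itself tell you that the image is indecomposable, nor that $\mathrm{ch}(B_{sx})=b_{sx}$ (which is precisely Soergel's conjecture for $sx$, and $sx$ is longer than everything in your inductive hypothesis). What closes this gap is Soergel's result recorded in the Remark following the Elias--Williamson theorem in the paper: the mere existence of some $M\in\cB$ with $\mathrm{ch}(M)=b_x$ already forces $M\cong B_x$. This is where Soergel's hom formula is hiding. Once you invoke it explicitly, your induction simultaneously establishes the projector formula and Soergel's conjecture for $U_n$, which is exactly the content of \cite{EL}. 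There is also a small notational slip: with $\underline{x}=r\underline{x}'$ you have $r\underline{x}'=\underline{x}$, not something shorter; the smaller projector appearing inside the correction term is $e_{\underline{x}'}$, the projector for the element $rx$, consistent with the paper's remark that $-c$ projects to $B_{rx}$. Your identification of the genuine technical difficulty---verifying that the pictured correction is honestly idempotent with the correct normalization---is accurate; that diagram chase in the one-color calculus, combined with the inductive hypothesis on the inner projector, is the heart of the argument in \cite{EL}.
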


To convince the reader that this is a categorification of Dyer's formula, one should remark that the rightmost term, when multiplied by $-1$, is an idempotent of $\mathrm{End}(s\underline{x})$ projecting to $B_{rx}$.

\end{document}